\newtheorem{lemma}{Lemma}[section]
\newtheorem{definition}{Definition}[section]
\newtheorem{theorem}{Theorem}[section]
\newtheorem{proposition}{Proposition}[section]
\newtheorem{example}{Example}[section]
\newtheorem{remark}{Remark}[section]
\newtheorem{corollary}{Corollary}[section]
\newcommand{\I}{{\rm Id}}
\newcommand{\bd}{{\rm bd\,}}
\newcommand{\skalp}[1]{\langle #1\rangle}
\newcommand{\B}{{\mathbb{B}}}
\newcommand{\dist}[2]{{\rm d}_{#1}(#2)}
\newcommand{\proj}[2]{{\rm P}_{#1}(#2)}
\newcommand{\argmin}{{\rm argmin\,}}
\newcommand{\epi}{{\rm epi\,}}
\newcommand{\Gr}{{\rm gph\,}}
\newcommand{\dom}{{\rm dom\,}}
\newcommand{\xb}{\bar x}
\newcommand{\yb}{\bar y}
\newcommand{\norm}[1]{\Vert #1 \Vert}
\newcommand{\mv}{\,\vert\,}
\newcommand{\R}{\mathbb{R}}
\newcommand{\N}{\mathbb{N}}
\newcommand{\setto}[1]{\mathop \to\limits^{#1}}
\newcommand{\co}{{\rm conv\,}}
\newcommand{\email}[1]{{\tt #1}}
\title{Calculus for directional limiting normal cones and subdifferentials}
\author{Mat\'{u}\v{s} Benko\thanks{
	      Institute of Computational Mathematics, Johannes Kepler University Linz,
              A-4040 Linz, Austria, \email{benko@numa.uni-linz.ac.at}},
              Helmut Gfrerer\thanks{
              Institute of Computational Mathematics, Johannes Kepler University Linz,
              A-4040 Linz, Austria, \email{helmut.gfrerer@jku.at}},
              Ji\v{r}\'{i} V. Outrata\thanks{
Czech Academy of Sciences, Institute of Information Theory and Automation,  
18208 Prague, Czech Republic, and Centre for Informatics and Applied Optimization, Federation
University of Australia, Ballarat, Vic 3350, Australia, \email{outrata@utia.cas.cz}}}
\date{}
\begin{document}

\maketitle

\begin{abstract}
The paper is devoted to the development of a comprehensive calculus for directional limiting normal cones, subdifferentials
and coderivatives in finite dimensions. This calculus encompasses the whole range of the standard generalized differential
calculus for (non-directional) limiting notions and relies on very weak (non-restrictive) qualification conditions
having also a directional character. The derived rules facilitate the application of tools exploiting
the directional limiting notions to difficult problems of variational analysis including, for instance,
various stability and sensitivity issues. This is illustrated by some selected applications in the last part of the paper.
\newline\newline
\noindent{\bf Key words:} Generalized differential calculus, Directional limiting normal cone, Directional limiting subdifferential,
Qualification conditions
\newline\newline
\noindent{\bf AMS subject classifications:} 49J53, 49J52, 90C31
\end{abstract}

\section{Introduction}

Since the early works of Mordukhovich, the limiting normal cone and the corresponding subdifferential
belong to the central notions of variational analysis. They admit a rich calculus both in finite
as well as in infinite dimensions and have been successfully utilized in a large variety of optimization and equilibrium problems,
see \cite{RoWe98}, \cite{Mo06a} and the references therein.
In particular, these notions play an important role in stability and sensitivity issues, above all
in analysis of various Lipschitzian stability notions related to multifunctions.
Having been motivated by some of the above listed applications,
in \cite{GM} the authors refined the original definitions
by restricting the limiting process only to a subset of sequences used in the original definitions.
This lead eventually to the notions of directional limiting normal cone and directional limiting subdifferential
which have been further developed and utilized above all in the works authored or coauthored by Gfrerer
\cite{Gfr13a,Gfr13b,Gfr14a,Gfr14b,GfrKl16,GO3,GO4,GY}.
It turned out that these directional notions (together with the directional limiting coderivative) enable us
indeed a substantially finer analysis of situations in which the estimates, provided by the standard calculus,
are too rough and so the corresponding assertions are not very useful.
This is, e.g., the case of verification  of metric subregularity of {\em feasibility mappings}
(calmness of {\em perturbation mappings}) related to constraint systems,
which lead to new first- and second-order sufficient conditions for metric subregularity \cite{Gfr13a,Gfr13b}.
They are now widely used as weak (non-restrictive) but yet verifiable qualification conditions \cite{GfrKl16,GY,YZ}.
In \cite{GO3} the authors used the directional limiting coderivative to establish new weak conditions
ensuring the calmness and the Aubin property of rather general implicitly defined multifunctions and in \cite{GO4}
this technique has then been worked out for a class of parameterized variational systems.

Further, directional limiting coderivative appears in sharp first-order optimality conditions \cite{Gfr13a},
entitled extended M-stationarity in \cite{Gfr14a},
which provide a dual characterization of B-stationarity for disjunctive programs.

One can definitely imagine also numerous other problems of variational analysis in which
the directional notions could be successfully employed. In all of them, however,
one needs a set of rules enabling us to compute efficiently the directional normal cones
and subdifferentials of concrete sets and functions in a similar way
like in the standard generalized differential calculus of Mordukhovich.
Some parts of such a calculus have already been conducted in connection with various results mentioned above.
In particular, in \cite{Gfr13b} one finds, apart from some elementary rules,
formulas for directional limiting normal cones to unions of convex polyhedral sets
and in \cite{GO3}, \cite{GO4} second-order chain rules have been derived
which enable us to compute directional limiting coderivatives to normal-cone mappings associated with various types of sets.
Further, \cite{LoWaYa17} contains several rules of the directional calculus even in Banach spaces,
in \cite{Viet} the authors proved a special coderivative sum rule
and in \cite{YZ} the situation has been examined when one has to do with the Carthesian products of sets and mappings.

The aim of this paper is to fill in this gap by building a systematic decent calculus of directional limiting normal cones
and subdifferentials following essentially the lines of \cite[Chapter 3]{Mo06a} and \cite{IoOut08}. The structure is as follows.

In Section 2 we collect the needed definitions and present some auxiliary results used throughout the paper.
Section 3 is devoted to the calculus of directional limiting normal cones.
As the most important results we consider formulas for the directional limiting normal cone of the pre-image
and of the image of a set in a Lipschitz continuous mapping. These results have numerous important consequences.
Section 4 concerns the calculus of directional limiting subdifferentials.
Apart from the chain and sum rules we consider the case of value functions,
distance functions, pointwise minima and maxima and examine also the partial directional limiting subdifferentials.
Section 5 provides formulas for directional limiting coderivatives of compositions
and sums of multifunctions together with some important special cases.
In Section 6 we present some problems of variational analysis, where the usage of the directional limiting calculus leads
to weaker (less restrictive) sufficient conditions or sharper (more precise) estimates.

Similarly as in \cite{IoOut08}, we have attempted to impose the ``weakest'' qualification conditions
expressed mostly in terms of directional metric subregularity of associated {\em feasibility mappings}.
Admittedly, these conditions are not always verifiable, but they can in many cases be replaced by stronger
(more restrictive) conditions expressed in terms of problem data.

In the directional calculus one also meets a new specific difficulty associated with the computation of the images or the pre-images
of the given {\em direction} in the considered mappings.
This obstacle leads in some cases to more complicated rules or to additional
qualification conditions.

It turns out, that the essential role in the calculus is played by Theorems \ref{The : ConstSet} and \ref{The : ConstSetForw},
concerning the pre-image and the image of a set in a continuous/Lipschitz mapping.
The basic ideas arising in these two statements appear in fact in almost all calculus rules throughout the whole paper.

The following notation is employed. The closed unit ball and the unit sphere in $\R^n$
are denoted by $\mathbb{B}$ and $\mathbb{S}$, respectively, while $\mathbb{B}_r(\xb):=\{x \in \R^n \mv \norm{x-\xb} \leq r\}$.
The identity mapping is denoted by $\I$.
Given a set $\Omega \subset \R^n$, $\bd \Omega$ stands for the boundary of $\Omega$, i.e., the set of points whose every neighborhood
contains a point of $\Omega$ as well as a point not belonging to $\Omega$. Moreover, given also a point $\xb$,
$\dist{\Omega}{\xb}$ denotes the distance from $\xb$ to set $\Omega$ and $\proj{\Omega}{\xb}$
denotes the projection of $\xb$ onto $\Omega$. For a sequence $x_k$, $x_k \setto{\Omega} \xb$
stands for $x_k \to \xb$ with $x_k \in \Omega$. Given a directionally differentiable function $\varphi: \R^n \to \R^m$
at $\xb \in \dom \varphi$, $\varphi^{\prime}(\bar{x};h)$ denotes the directional derivative of $\varphi$ at $\xb$ in direction $h$.
Finally, following traditional patterns, we denote by $o(t)$ for $t \geq 0$ a term with the property that $o(t)/t \to 0$
when $t \downarrow 0$.

\section{Preliminaries}

We start by recalling several definitions and results from variational analysis.
Let $\Omega\subset\R^n$ be an arbitrary closed set and $\xb\in\Omega$.
The {\em contingent} (also called  {\em Bouligand} or {\em tangent}) {\em cone} to $\Omega$ at $\xb$,
denoted by $T_{\Omega}(\xb)$, is given by
\[T_{\Omega}(\xb):=\{u\in \R^n\mv \exists (u_k)\to u, (t_k)\downarrow 0: \xb+t_ku_k\in\Omega \ \forall k\}.\]
We denote by
\begin{equation}\label{DefEpsNormals}
\hat N_{\Omega}(\xb):=\{\xi \in\R^n\mv \limsup_{x'\setto \Omega \xb}\frac{\xi^T( x'-\xb)}{\norm{x'-\xb}}\leq 0\}
\end{equation}
the {\em Fr\'echet} ({\em regular}) {\em normal cone} to $\Omega$ at $\xb$.
Finally, the {\em limiting} ({\em Mordukhovich}) {\em normal cone} to $\Omega$ at $\xb$ is
defined by
\[N_{\Omega}(\xb):=\{\xi \in\R^n \mv \exists (x_k)\setto{\Omega}\xb,\ (\xi_k)\to \xi: \xi_k\in \hat N_{\Omega}(x_k) \ \forall k\}.\]
If $\xb \notin \Omega$ we put $T_{\Omega}(\xb)=\emptyset$, $\hat N_{\Omega}(\xb)=\emptyset$ and $N_{\Omega}(\xb)=\emptyset$.

The Mordukhovich normal cone is generally nonconvex whereas the
Fr\'echet normal cone is always convex. In the case of a convex
set $\Omega$, both the Fr\'echet normal cone and the Mordukhovich normal cone
coincide with the standard normal cone from convex analysis and,
moreover, the contingent cone is equal to the tangent cone in the
sense of convex analysis.

Note that $\xi\in\hat N_{\Omega}(\xb)$ $\Leftrightarrow$ $\xi^Tu \leq 0\ \forall u\in T_{\Omega}(\xb)$,
i.e., $\hat N_{\Omega}(\xb)= \hat N_{T_{\Omega}(\xb)}(0)=T_{\Omega}(\xb)^\circ$ is the polar cone of $T_{\Omega}(\xb)$.

Consider an extended real-valued function $f: \R^n \to \bar \R$ and a point
$\xb \in \dom f := \{x \in \R^n \mv f(x) \in \R \}$, where $\dom f$ denotes the domain of $f$.
The {\em Fr\'echet} ({\em regular}) {\em subdifferential} of $f$ at $\xb$ is a set
$\hat \partial f(\xb)$ consisting of all $\xi \in \R^n$ such that
\begin{equation*}\label{DefRegSubdif}
 f(x) \geq f(\xb) + \skalp{\xi,x-\xb} + o(\norm{x-\xb}),
\end{equation*}
the {\em limiting} ({\em Mordukhovich}) {\em subdifferential} of $f$ at $\xb$, denoted by $\partial f(\xb)$, is given by
\[
 \partial f(\xb) := \{\xi \in \R^n \mv \exists (x_k) \setto{f} \xb,\ (\xi_k)\to \xi: \xi_k \in \hat \partial f(x_k) \ \forall k \}
\]
and the {\em singular subdifferential} of $f$ at $\xb$ is defined by
\[
 \partial^{\infty} f(\xb) := \{\xi \in \R^n \mv \exists (\lambda_k) \downarrow 0, (x_k) \setto{f} \xb,
 \xi_k \in \hat \partial f(x_k) \ \forall k: (\lambda_k \xi_k) \to \xi\}.
\]

Denoting by $\epi f := \{(x,\alpha) \in R^{n+1} \mv a \geq f(x)\}$ the epigraph of $f$, there is a well-known
equivalent description of the subdifferentials, namely
\begin{eqnarray*}
 \hat \partial f(\xb) & = & \{\xi \in \R^n \mv (\xi,-1) \in \hat N_{\epi f}(\xb,f(\xb))\}, \\
 \partial f(\xb) & = & \{\xi \in \R^n \mv (\xi,-1) \in N_{\epi f}(\xb,f(\xb))\}, \\
 \partial^{\infty} f(\xb) & = & \{\xi \in \R^n \mv (\xi,0) \in N_{\epi f}(\xb,f(\xb))\},
\end{eqnarray*}
where the last relation holds if $\epi f$ is locally closed around $(\xb,f(\xb))$
or, equivalently, $f$ is lower semicontinuous (lsc) around $\xb$, see e.g. \cite[Theorem 8.9]{RoWe98}.

Given a multifunction $M:\R^n\rightrightarrows \R^m$ and a point
$(\xb,\yb)\in \Gr M:=\{(x,y)\in \R^n \times \R^m\mv y\in M(x)\}$, where $\Gr M$ denotes the graph of $M$,
the {\em graphical derivative} of $M$ at $(\xb,\yb)$ is a multifunction
$D M(\xb,\yb):\R^n \rightrightarrows \R^m$ given by
\[D M(\xb,\yb)(u):=\{v \in \R^m \mv (u,v) \in T_{\Gr M}(\xb,\yb)\}.\]
The {\em regular coderivative} of $M$ at $(\xb,\yb)$ is a multifunction
$\hat D^\ast M(\xb,\yb):\R^m\rightrightarrows \R^n$ with the values
\[\hat D^\ast M(\xb,\yb)(\eta):=\{\xi \in \R^n\mv (\xi,-\eta) \in \hat N_{\Gr M}(\xb,\yb)\}\]
and the {\em limiting} ({\em Mordukhovich}) {\em coderivative} of $M$ at $(\xb,\yb)$ is a multifunction
$D^\ast M(\xb,\yb):\R^m\rightrightarrows \R^n$ defined by
\[D^\ast M(\xb,\yb)(\eta):=\{\xi\in \R^n\mv (\xi,-\eta)\in N_{\Gr M}(\xb,\yb)\},\]
i.e., $D^\ast M(\xb,\yb)(\eta)$ is the collection of all $\xi\in \R^n$ for which there are sequences
$(x_k,y_k)\to (\xb,\yb)$ and $(\xi_k,\eta_k)\to(\xi,\eta)$ with $(\xi_k, -\eta_k)\in \hat N_{\Gr M}(x_k,y_k)$,
or, equivalently, $\xi_k \in \hat D^\ast M(x_k,y_k)(\eta_k)$.
The notation $D M(\xb,\yb)$, $\hat D^\ast M(\xb,\yb)$ and $D^\ast M(\xb,\yb)$ is simplified to
$D M(\xb)$, $\hat D^\ast M(\xb)$ and $D^\ast M(\xb)$ when $M$ is single-valued at $\xb$ with $M(\xb) = \{\yb\}$.

Just as in case of subdiferentials and epigraphs, it is often important to have closed graphs of multifunctions.
We say that $M$ is {\em outer semicontinuous (osc) at} $\xb$ if the existence of
sequences $x_k \to \xb$ and $y_k \to y$ with $y_k \in S(x_k)$ implies $y \in S(\xb)$
and we say that $M$ is osc if it is osc at every point, which is equivalent to the closedness of $\Gr M$,
see \cite[Theorem 5.7]{RoWe98}.

For more details we refer to the monographs \cite{Mo06a,RoWe98}.

Directional versions of these limiting constructions were introduced in \cite{GM} and \cite{Gfr13a} for general Banach spaces
and later on equivalently reformulated for finite dimensional spaces in \cite{Gfr14a} in the following way.
Given a direction $u \in \R^n$, the {\em limiting} {\em normal cone to a subset} $\Omega\subset \R^n$
at $\xb\in\Omega$ {\em in direction} $u$ is defined by
\begin{equation*}\label{eq : NCdirdef}
  N_{\Omega}(\xb;u):=\{\xi \in\R^n\mv \exists (t_k)\downarrow 0,\ (u_k)\to u,\ (\xi_k)\to \xi:
  \xi_k\in \hat N_{\Omega}(\xb+t_ku_k) \ \forall k\}.
\end{equation*}

Note that by the definition we have $N_{\Omega}(\xb;0)=N_{\Omega}(\xb)$.
Further $N_{\Omega}(\xb;u) \subset N_{\Omega}(\xb)$ for all $u$, $N_{\Omega}(\xb;u)=\emptyset$ if $u \notin T_{\Omega}(\xb)$
and $N_{\Omega}(\xb;u)=\{0\}$ if $u \in T_{\Omega}(\xb) \setminus T_{\bd \Omega}(\xb)$.

The following simple lemma provides a hint about possible applications of directional limiting normal cones.

\begin{lemma} \label{lem : LimNorConeViaDir}
  Let $\Omega \subset \R^n$ be closed and $\xb \in\Omega$. Then
  \[N_{\Omega}(\xb) = \hat N_{\Omega}(\xb) \cup \bigcup_{u \in T_{\Omega}(\xb) \cap \mathbb{S}} N_{\Omega}(\xb,u).\]
\end{lemma}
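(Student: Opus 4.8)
The plan is to prove the two inclusions separately, with the inclusion $\supseteq$ being essentially immediate and the inclusion $\subseteq$ carrying the actual content.

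For $\supseteq$, I would observe that the constant sequence $x_k := \xb$ together with $\xi_k := \xi$ shows $\hat N_{\Omega}(\xb) \subset N_{\Omega}(\xb)$, while the inclusion $N_{\Omega}(\xb;u) \subset N_{\Omega}(\xb)$ for every $u$ has already been recorded in the text preceding the lemma. Taking the union over $u \in T_{\Omega}(\xb)\cap\mathbb{S}$ then yields that the entire right-hand side is contained in $N_{\Omega}(\xb)$.

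For $\subseteq$, I would start from an arbitrary $\xi \in N_{\Omega}(\xb)$ and unpack the definition: there are sequences $x_k \setto{\Omega}\xb$ and $\xi_k \to \xi$ with $\xi_k \in \hat N_{\Omega}(x_k)$ for all $k$. The natural move is a case distinction on how $x_k$ approaches $\xb$. If $x_k = \xb$ for infinitely many $k$, then along that subsequence $\xi_k \in \hat N_{\Omega}(\xb)$, and since the Fr\'echet normal cone $\hat N_{\Omega}(\xb)=T_{\Omega}(\xb)^\circ$ is closed, passing to the limit gives $\xi \in \hat N_{\Omega}(\xb)$. Otherwise I may assume, after discarding finitely many terms, that $x_k \neq \xb$ for all $k$; then I would set $t_k := \norm{x_k-\xb}\downarrow 0$ and $u_k := (x_k-\xb)/t_k$, so that $x_k = \xb + t_k u_k$ with $\norm{u_k}=1$.

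The key technical step is then compactness of the unit sphere $\mathbb{S}$: the bounded sequence $(u_k)$ admits a subsequence converging to some $u \in \mathbb{S}$. Along this subsequence the triples $(t_k,u_k,\xi_k)$ verify exactly the defining conditions of $N_{\Omega}(\xb;u)$, whence $\xi \in N_{\Omega}(\xb;u)$. Finally, since $\xb + t_k u_k = x_k \in \Omega$ with $t_k \downarrow 0$ and $u_k \to u$, the very definition of the contingent cone forces $u \in T_{\Omega}(\xb)$, hence $u \in T_{\Omega}(\xb)\cap\mathbb{S}$ and $\xi$ lies in the union on the right. I do not anticipate a genuine obstacle here; the only points requiring care are the closedness of $\hat N_{\Omega}(\xb)$ exploited in the first case and the automatic membership $u \in T_{\Omega}(\xb)$ obtained from the normalization, both of which are routine.
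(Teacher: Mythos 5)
Your proposal is correct and follows essentially the same route as the paper's own proof: the trivial inclusion $\supset$ from the definitions, then for $\subset$ the case distinction on whether $x_k = \xb$ infinitely often (using closedness of $\hat N_{\Omega}(\xb)$) versus the normalization $t_k := \norm{x_k - \xb}$, $u_k := (x_k - \xb)/t_k$ with a compactness argument on $\mathbb{S}$ to extract $u_k \to u \in T_{\Omega}(\xb) \cap \mathbb{S}$. The only cosmetic difference is that you spell out the $\supset$ direction and the membership $u \in T_{\Omega}(\xb)$ in slightly more detail than the paper does.
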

\begin{proof}
  Inclusion $\supset$ follows directly from definition.

  Now let $\xi \in N_{\Omega}(\xb)$ and consider $x_k \to \xb$, $\xi_k \to \xi$ with
  $x_k \in \Omega$ and $\xi_k \in \hat N_\Omega(x_k)$. If $x_k = \xb$ for infinitely many $k$ we have
  $\xi \in \hat N_{\Omega}(\xb)$ due to closedness of $\hat N_{\Omega}(\xb)$. On the other hand if $x_k \neq \xb$
  for infinitely many $k$, we set $t_k := \norm{x_k - \xb}$ and $u_k := (x_k - \xb) / \norm{x_k - \xb}$
  and by passing to a subsequence we assume $(t_k) \downarrow 0$ and $u_k \to u \in \mathbb{S}$.
  Since $x_k = \xb + t_k u_k \in \Omega$ we conclude $\xi \in N_{\Omega}(\xb,u)$ as well as $u \in T_\Omega(\xb)$,
  completing the proof.
\end{proof}

\begin{proposition} \cite[Proposition 3.3]{YZ} \label{Pro : ProductSet}
  Let $\R^n$ be written as $\R^n = \R^{n_1} \times \ldots \times \R^{n_l}$ and for $x \in \R^n$ we write $x = (x_1,\ldots,x_l)$
  with $x_i \in \R^{n_i}$. Let $C_i \subset \R^{n_i}$ be closed for $i=1,\ldots,l$, set $C = C_1 \times \ldots \times C_l$
  and consider a point $\xb=(\xb_1,\ldots,\xb_l) \in C$ and a direction $h=(h_1,\ldots,h_l) \in \R^n$. Then
  \begin{equation*}
    N_C(\xb;h) \subset N_{C_1}(\xb_1;h_1) \times \ldots \times N_{C_l}(\xb_l;h_l).
  \end{equation*}
\end{proposition}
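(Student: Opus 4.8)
The plan is to reduce the assertion to the coordinatewise behaviour of the \emph{regular} normal cone on product sets and then to pass to the directional limit blockwise. First I would record the product formula for the Fr\'echet normal cone: for every $x=(x_1,\ldots,x_l)\in C$ one has
\[
\hat N_C(x)\subset \hat N_{C_1}(x_1)\times \ldots \times \hat N_{C_l}(x_l).
\]
This inclusion is all that is needed, and it falls straight out of the defining inequality \eqref{DefEpsNormals}: given $\xi=(\xi_1,\ldots,\xi_l)\in\hat N_C(x)$, one tests the $\limsup$ condition along sequences $x'\in C$ that move only in the $i$-th block while the remaining blocks stay fixed at $x_j$; this forces $\limsup_{x_i'\setto{C_i} x_i}\xi_i^T(x_i'-x_i)/\norm{x_i'-x_i}\le 0$, i.e. $\xi_i\in\hat N_{C_i}(x_i)$ for each $i$.

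Next I would take an arbitrary $\xi=(\xi_1,\ldots,\xi_l)\in N_C(\xb;h)$ and unpack the definition of the directional limiting normal cone: there exist sequences $(t_k)\downarrow 0$, $(u_k)\to h$ and $(\xi_k)\to\xi$ with $\xi_k\in\hat N_C(\xb+t_ku_k)$ for all $k$. Decomposing according to the product structure, write $u_k=(u_{k,1},\ldots,u_{k,l})$ and $\xi_k=(\xi_{k,1},\ldots,\xi_{k,l})$, so that $u_{k,i}\to h_i$ and $\xi_{k,i}\to\xi_i$ for every $i$. Since $\xb+t_ku_k=(\xb_1+t_ku_{k,1},\ldots,\xb_l+t_ku_{k,l})$, applying the inclusion part of the product formula at $x=\xb+t_ku_k$ gives
\[
\xi_{k,i}\in\hat N_{C_i}(\xb_i+t_ku_{k,i}),\qquad i=1,\ldots,l.
\]

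Finally, for each fixed $i$ the triple $(t_k)\downarrow 0$, $(u_{k,i})\to h_i$, $(\xi_{k,i})\to\xi_i$ together with the last display is exactly the certificate required by the definition of $N_{C_i}(\xb_i;h_i)$, whence $\xi_i\in N_{C_i}(\xb_i;h_i)$. Collecting the blocks yields $\xi\in N_{C_1}(\xb_1;h_1)\times \ldots \times N_{C_l}(\xb_l;h_l)$, which is the claim (one may also just treat $l=2$ and finish by induction).

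I do not expect a genuine obstacle here: the argument is a one-shot blockwise unpacking of the definition. The only point deserving care is that it delivers precisely the stated \emph{inclusion} rather than equality. The reverse inclusion would require synthesising the individually chosen scalings of the separate factors into a single common sequence $t_k$ driving all blocks simultaneously, and there is no reason such a common scaling exists; this is why the directional product rule is genuinely one-sided, in contrast to the non-directional regular normal cone, where the same computation produces equality.
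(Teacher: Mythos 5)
Your proof is correct: the blockwise Fr\'echet product inclusion $\hat N_C(x)\subset \hat N_{C_1}(x_1)\times\ldots\times\hat N_{C_l}(x_l)$ combined with unpacking the directional limit using the \emph{same} sequence $(t_k)$ for all blocks is exactly the standard argument, and your closing remark about why only the inclusion (not equality) survives in the directional setting is also accurate. Note that the paper itself offers no proof to compare against --- it imports the statement by citation from \cite[Proposition 3.3]{YZ} --- so your argument stands on its own and coincides with the natural proof given in that source.
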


For a multifunction $M:\R^n\rightrightarrows \R^m$ and a direction $(u,v)\in \R^n\times \R^m$,
the {\em limiting} {\em coderivative} of $M$ {\em in direction} $(u,v)$ at $(\xb,\yb)\in \Gr M$ is defined as the multifunction
$D^\ast M((\xb,\yb);(u,v)):\R^m\rightrightarrows\R^n$ given by
\[D^\ast M((\xb,\yb);(u,v))(\eta):=\{\xi\in \R^n\mv (\xi,-\eta)\in N_{\Gr M}((\xb,\yb);(u,v))\}.\]

Clearly, one has $D^\ast M((\xb,\yb);(0,0))=D^\ast M(\xb,\yb)$.
In case of a continuously differentiable single-valued mapping, the following relations hold.
\begin{remark} \label{Rem: IdentityMapping}
  Let $F : \R^n \to \R^m$ be continuously differentiable and let $\nabla F(\xb)$ denote its Jocobian.
  One has $DF(\xb)(u) = \nabla F (\xb)u$ and thus $D^*F(\xb;(u,v))(\eta) \neq \emptyset$ if
  and only if $v = \nabla F (\xb) u$, in which case
  \[D^*F(\xb;(u,v))(\eta) = D^*F(\xb)(\eta) = \hat D^*F(\xb)(\eta) = (\nabla F (\xb))^T \eta.\]
\end{remark}

Our approach to directional limiting subdiferentials differs from the one established in \cite{Gfr13a, GM, LoWaYa17},
where it is either defined or equivalently described as a limit of regular subdiferentials.
In the finite dimensional setting these definitions read as follows.
Given $f: \R^n \to \bar \R$, $\xb \in \dom f$ and a direction $u \in \R^n$, consider the set
\[\partial_a f(\xb,u) := \{\xi \in \R^n \mv \exists (t_k) \downarrow 0,\ (u_k)\to u,\ (\xi_k)\to \xi:
f(\xb + t_k u_k) \to f(\xb), \xi_k \in \hat \partial f(\xb + t_k u_k) \ \forall k \},\]
which we will call the {\em analytic limiting subdiferential} of $f$ at $\xb$ {\em in direction} $u$,
following the notation from \cite[Definition 1.83]{Mo06a}.\footnote{
It was pointed out in \cite{LoWaYa17} that in \cite{Gfr13a} the condition $f(\xb + t_k u_k) \to f(\xb)$ was omitted
from the definition and the same holds true for the definition from \cite{GM}.
Such definition leads e.g. to violation of $\partial_a f(\xb,0) = \partial f(\xb)$,
which is clearly undesirable and the omission of $f(\xb + t_k u_k) \to f(\xb)$ was most likely unintentional.}

In this paper, inspired by directional coderivatives, we consider a direction $(u,\mu) \in \R^{n+1}$
and define the {\em limiting subdiferential} of $f$ at $\xb$ {\em in direction} $(u,\mu)$ via
\begin{equation} \label{eq : SubdifDef}
  \partial f(\xb;(u,\mu)) := \{\xi \in \R^n \mv (\xi,-1) \in N_{\epi f}((\xb,f(\xb));(u,\mu))\}.
\end{equation}
The advantages of this definition are twofold: First, it leads to a finer analysis and second, there is
a close relationship between subdiferentials and normal cones which allows us to easily carry over the results
obtained for normal cones to subdiferentials. More detailed discussion about the two versions
of directional subdiferentials is presented at the beginning of Section 4.

Finally, we present some well-known properties of multifunctions as well as their directional counterparts.
In order to do so, following \cite{Gfr13a}, we define a directional neighborhood of (a direction) $u \in \R^n$.

Given a direction $u\in \R^n$ and positive numbers $\rho,\delta>0$, consider the set $\mathcal{V}_{\rho,\delta}(u)$ given by
\begin{equation} \label{EqDefNbhd}
\mathcal{V}_{\rho,\delta}(u):=\{z \in \rho \B \mv \big\Vert \norm{u} z- \norm{z} u \big\Vert\leq\delta \norm{z} \, \norm{u}\}.
\end{equation}
We say that a set $\mathcal V$ is a {\em directional neighborhood} of $u$ if there exist $\rho,\delta>0$ such that
$\mathcal{V}_{\rho,\delta}(u) \subset \mathcal V$. Moreover, we say that a sequence $x_k \in \R^n$ converges to some $\xb$
from direction $u \in \R^n$ if for every directional neighborhood $\mathcal V$ of $u$ we have $x_k \in \xb + \mathcal V$ for
sufficiently large $k$, or, equivalently, if there exist $(t_k) \downarrow 0$ and $u_k \to u$ with $x_k = \xb + t_k u_k$.

\begin{definition}
  Let $M: \R^n \rightrightarrows \R^m$ and $(\xb,\yb) \in \Gr M$. We say that
  $M$ is {\em metrically subregular} at $(\xb,\yb)$ provided there exist $\kappa > 0$ and
  a neighborhood $U$ of $\xb$ such that
  \[\dist{M^{-1}(\yb)}{x} \leq \kappa \dist{M(x)}{\yb} \ \forall x \in U.\]
  Given $u \in \R^n$, we say $M$ is {\em metrically subregular in direction} $u$ at $(\xb,\yb)$
  if there exists a directional neighborhood $\mathcal{U}$ of $u$ such that the above estimate
  holds for all $x \in \xb + \mathcal{U}$.
\end{definition}

It is well-known that metric subregularity of $M$ at $(\xb,\yb)$ is equivalent to calmness of $M^{-1}$ at $(\yb,\xb)$.
We say that $S: \R^m \rightrightarrows \R^n$ is {\em calm} at $(\yb,\xb) \in \Gr S$
provided there exist $\kappa > 0$ and neighborhoods $U$ of $\xb$ and $V$ of $\yb$ such that
\begin{equation} \label{eq : CalmnessDef}
  S(y) \cap U \subset S(\yb) + \kappa \norm{y - \yb}\B \ \forall y \in V.
\end{equation}

It is also known that neighborhood $U$ can be reduced (if necessary) in such a way that neighborhood $V$
can be replaced by the whole space $\R^m$, cf. \cite[Exercise 3H.4]{DonRo09}.

For our purposes it is, however, suitable to employ estimate \eqref{eq : CalmnessDef}
without this simplification and to introduce the directional calmness by replacing $V$
by $\yb + \mathcal{V}$, where $\mathcal{V}$ is the appropriate directional neighborhood.

Calmness, similarly as some other Lipschitzian stability properties enables us to estimate
the images of $S$ around $(\yb,\xb)$ via $S(\yb)$ and the respective calmness modulus $\kappa$. However,
what we actually need for our directional calculus is the opposite, i.e., we need to be able to provide
an estimate of $\xb$ in terms of $S(y)$ for $y$ close to $\yb$. This our need is reflected
in the following inner version of calmness.

\begin{definition}
  A set-valued mapping $S: \R^m \rightrightarrows \R^n$ is called
  {\em inner calm at $(\yb,\xb) \in \Gr S$ with respect to (w.r.t.)} $\Omega \subset \R^m$
  if there exist $\kappa > 0$ and a neighborhood $V$ of $\yb$ such that
  \[\xb \in S(y) + \kappa \norm{y - \yb}\B \quad \forall \, y \in V \cap \Omega.\]
  If in the above definition $V = \yb + \mathcal{V}$, where $\mathcal{V}$ is a directional neighborhood of a direction $v \in \R^m$,
  we say that $S$ possesses the {\em inner calmness} property at $(\yb,\xb)$ w.r.t. $\Omega$ {\em in direction $v$}.
\end{definition}

Note that inner calmness of $S$ at $(\yb,\xb) \in \Gr S$ w.r.t. $\dom S$ in direction $v$
exactly corresponds to the directional inner semicompactness of $S$ at $(\yb,\xb) \in \Gr S$ in direction $v$
from \cite[Definition 4.4]{LoWaYa17}. In literature one can find also several other names for this property,
such as, e.g., Lipschitz lower semicontinuity \cite{KlKu15} or recession with linear rate \cite{Io16}.

Apart from the notions of directional metric subregularity, calmness and inner calmness we will make use also
of inner semicompactness and semicontinuity.

Recall that $S$ is {\em inner semicompact at $\yb$ w.r.t.} $\Omega \subset \R^m$ if for every sequence $y_k \setto{\Omega} \yb$
there exists a subsequence $K$ of $\N$ and a convergent sequence $(x_k)_{k\in K}$ with $x_k \in S(y_k)$ for $k \in K$.
Given $\xb \in S(\yb)$, we say that $S$ is {\em inner semicontinuous at $(\yb,\xb)$ w.r.t.} $\Omega \subset \R^m$
if for every sequence $y_k \setto{\Omega} \yb$ there exists a subsequence $K$ of $\N$
and a sequence $(x_k)_{k\in K}$ with $x_k \to \xb$ and $x_k \in S(y_k)$ for $k \in K$.
If $\Omega=\R^m$, we speak only about inner semicompactness at $\yb$ and inner semicontinuity at $(\yb,\xb)$.
For more details we refer to \cite{Mo06a}.

The directional versions of inner semicompactness and semicontinuity are obtained by restricting our attention to
$y_k$ converging to $\yb$ from some direction $v$.
We point out here that in \cite[Definition 4.4]{LoWaYa17} the authors defined
the directional versions of inner semicompactness and semicontinuity
in such a way that it allows them to find a suitable direction $h$,
i.e., they control the rate of convergence $x_k \to \xb$ by requiring
the difference quotients $(x_k - \xb)/t_k$ either to be bounded or to converge to some prescribed $h$.
We believe, however, that it is not very suitable to call such properties
semicompactness and semicontinuity, as those requirements are clearly much stronger
and they are not implied by their non-directional counterparts, as also the authors admit.

Clearly, inner calmness implies both inner semicontinuity and semicompactness.

Interestingly, in \cite[Theorem 8]{GO3} Gfrerer and Outrata also investigated the estimate from
definition of inner calmness and established some sufficient conditions to ensure both, calmness and inner calmness,
of a class of solution maps.

Note that in case of a single-valued mapping $\varphi: \R^m \to \R^n$, calmness and inner calmness coincide
and they read as
\begin{equation} \label{eq : CalmnessDefSingValued}
  \norm{ \varphi(y) - \varphi(\yb)} \leq \kappa \norm{y - \yb} \quad \forall \, y \in V.
\end{equation}
Further, $\varphi$ is called {\em Lipschitz continuous near} $\yb$ {\em in direction} $v$ if the inequality
\[\norm{\varphi(y) - \varphi(y^{\prime})} \leq \kappa \norm{y - y^{\prime}} \ \forall \, y, y^{\prime} \in \yb + \mathcal{V}\]
is fulfilled with $\mathcal{V}$ being a directional neighborhood of $v$.
Note that Lipschitz continuity of $\varphi$ near $\yb$ in direction $v$ actually implies
Lipschitz continuity of $\varphi$ near every point $y \in \yb + \mathcal{V}$, $y \neq \yb$.

In construction of the directional limiting calculus one is confronted with the following issue. Given
a mapping $S: \R^m \rightrightarrows \R^n$, a point $\yb \in \R^m$, a direction $v \in \R^m$
and a sequence $y_k \to \yb$ from $v$ (i.e. $y_k = \yb + t_k v_k$ for some $(t_k) \downarrow 0$, $v_k \to v$),
we would like to identify not only a point $\xb \in S(\yb)$ with $x_k \to \xb$ for some $x_k \in S(y_k)$,
but also a direction $h \in \R^n$ such that $x_k = \xb + t_k h_k$ for some $h_k \to h$.

The task of finding an appropriate direction $h$ is related to the following sets.
Given sequences $(a_k) \in \R^n$ and $(t_k) \downarrow 0$, we set
\begin{align} \label{eqn : OmegaDef}
  &  \Gamma(a_k,t_k) & := & \ \{\omega \in \R^n \mv \exists \textrm{ a subsequence } K \textrm{ of } \N :
    a_k / t_k \to \omega \textrm{ when } k \in K \}, \\ \label{eqn : OmegaInfDef}
  &  \Gamma^{\infty}(a_k,t_k) & := & \ \{\omega \in \mathbb{S} \mv t_k / \norm{a_k} \to 0,
    \exists \textrm{ a subsequence } K \textrm{ of } \N : a_k / \norm{a_k} \to \omega \textrm{ when } k \in K \}.
  \end{align}
Note that exactly one of these sets is not empty, since $\Gamma(a_k,t_k) = \emptyset$ is equivalent to $t_k / \norm{a_k} \to 0$.
In the situation considered above sequences $a_k$ appear in form
$a_k \in S(\yb + t_k v_k) - S(\yb)$. Thus, if $\Gamma(a_k,t_k) \neq \emptyset$, one can clearly take
a suitable direction $h \in \Gamma(a_k,t_k)$, while in the other case one can still proceed with
$h \in \Gamma^{\infty}(a_k,t_k)$ to obtain different (but rather rough) estimates. Notation
\eqref{eqn : OmegaDef},\eqref{eqn : OmegaInfDef} will be extensively used throughout the whole sequel.

Moreover, it is easy to see that inner calmness can be characterized in the following way.

\begin{lemma}
  A set-valued mapping $S: \R^m \rightrightarrows \R^n$ is inner calm at $(\yb,\xb) \in \Gr S$ w.r.t. $\Omega$ in direction $v$
  if and only if for every $(t_k) \downarrow 0$, $v_k \to v$ with $\yb + t_k v_k \in \Omega$ there exist
  a subsequence $K$ of $\N$ and a sequence $(x_k)_{k\in K}$
  with $x_k \in S(\yb + t_k v_k)$ for $k\in K$ such that $\Gamma(x_k - \xb,t_k) \neq \emptyset$.
\end{lemma}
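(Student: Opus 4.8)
The plan is to establish the two implications separately: the forward one directly from the inner-calmness estimate, and the reverse one by contraposition. The conceptual bridge in both directions is the observation recorded just before the lemma, namely that $\Gamma(a_k,t_k)\neq\emptyset$ is equivalent to $t_k/\norm{a_k}\not\to 0$, i.e.\ to the difference quotients $a_k/t_k$ admitting a bounded (hence, by Bolzano--Weierstrass, convergent) subsequence. Thus the whole assertion is really about whether the deviation $x_k-\xb$ of some selection $x_k\in S(\yb+t_kv_k)$ can be kept of order $O(t_k)$ along a subsequence. I would also repeatedly use the equivalence, stated in the paragraph on directional neighborhoods, between $y_k\to\yb$ from direction $v$ and the existence of a representation $y_k=\yb+t_kv_k$ with $(t_k)\downarrow 0$ and $v_k\to v$.

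For the forward implication, assume $S$ is inner calm at $(\yb,\xb)$ w.r.t.\ $\Omega$ in direction $v$, with modulus $\kappa$ and directional neighborhood $\mathcal V$. Given arbitrary $(t_k)\downarrow 0$, $v_k\to v$ with $y_k:=\yb+t_kv_k\in\Omega$, the sequence $y_k$ converges to $\yb$ from direction $v$, so $y_k\in\yb+\mathcal V$ for all large $k$; for such $k$ the inner-calmness estimate supplies $x_k\in S(y_k)$ with $\norm{x_k-\xb}\le\kappa\norm{y_k-\yb}=\kappa t_k\norm{v_k}$. Dividing by $t_k$ gives $\norm{x_k-\xb}/t_k\le\kappa\norm{v_k}\to\kappa\norm{v}$, so $(x_k-\xb)/t_k$ is bounded; extracting a convergent subsequence $K$ (within the range of large $k$) then yields $\Gamma(x_k-\xb,t_k)\neq\emptyset$, which is exactly the required conclusion.

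For the reverse implication I would argue contrapositively. If $S$ fails to be inner calm in direction $v$, then for each $k\in\N$, taking modulus $\kappa=k$ and the basic directional neighborhood $\mathcal V_{1/k,1/k}(v)$, the negated estimate produces a point $y_k\in(\yb+\mathcal V_{1/k,1/k}(v))\cap\Omega$ with $\dist{S(y_k)}{\xb}>k\norm{y_k-\yb}$. Since $\xb\in S(\yb)$ forces $\dist{S(\yb)}{\xb}=0$, we must have $y_k\neq\yb$, and $\norm{y_k-\yb}\le 1/k\to 0$. I would then choose a decomposition $y_k=\yb+t_kv_k$ with $(t_k)\downarrow 0$ (after passing to a monotone subsequence) and $v_k\to v$, and verify the blow-up $\dist{S(y_k)}{\xb}/t_k\to\infty$. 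Then every selection $x_k\in S(y_k)$, along every subsequence, satisfies $\norm{x_k-\xb}/t_k\to\infty$, i.e.\ $t_k/\norm{x_k-\xb}\to 0$, so $\Gamma(x_k-\xb,t_k)=\emptyset$; this exhibits a sequence violating the right-hand side, completing the contrapositive.

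The main obstacle lies in this last decomposition, where the normalization must simultaneously reproduce $v_k\to v$ \emph{exactly} and force the quotients to diverge. For $v\neq 0$ the natural scale $t_k:=\norm{y_k-\yb}/\norm{v}$ works: the membership $y_k-\yb\in\mathcal V_{1/k,1/k}(v)$ gives, after dividing the defining inequality by $\norm{y_k-\yb}$, that $v_k=\norm{v}(y_k-\yb)/\norm{y_k-\yb}\to v$, while $\norm{y_k-\yb}=t_k\norm{v}$ turns the bad estimate into $\dist{S(y_k)}{\xb}/t_k>k\norm{v}\to\infty$. The degenerate direction $v=0$ needs separate care, since there $\mathcal V_{\rho,\delta}(0)=\rho\B$ and the linear scale collapses; here I would instead take a super-linear scale such as $t_k:=\sqrt{k}\,\norm{y_k-\yb}$, for which $\norm{v_k}=\norm{y_k-\yb}/t_k=1/\sqrt{k}\to 0=v$, $t_k\le 1/\sqrt{k}\to 0$, and $\dist{S(y_k)}{\xb}/t_k>k\norm{y_k-\yb}/t_k=\sqrt{k}\to\infty$, again giving $\Gamma(x_k-\xb,t_k)=\emptyset$ for every selection.
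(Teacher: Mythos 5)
Your proof is correct. There is actually nothing in the paper to compare it against: the lemma is introduced with ``it is easy to see'' and no argument is given, so your write-up supplies the missing details. The forward implication is the routine one — the estimate $\xb\in S(\yb+t_kv_k)+\kappa t_k\norm{v_k}\B$ (valid for large $k$ because $\yb+t_kv_k\to\yb$ from direction $v$) bounds the quotients, and Bolzano--Weierstrass gives $\Gamma(x_k-\xb,t_k)\neq\emptyset$. Your contrapositive correctly isolates the only genuinely delicate point, namely choosing the scale $t_k$ in the decomposition $y_k=\yb+t_kv_k$ so that simultaneously $v_k\to v$ and $\dist{S(y_k)}{\xb}/t_k\to\infty$: the linear scale $t_k=\norm{y_k-\yb}/\norm{v}$ works for $v\neq 0$ (the defining inequality of $\mathcal{V}_{1/k,1/k}(v)$, divided by $\norm{y_k-\yb}$, is exactly what forces $v_k\to v$), while the super-linear scale $t_k=\sqrt{k}\,\norm{y_k-\yb}$ handles the degenerate case $v=0$, where $\mathcal{V}_{\rho,\delta}(0)=\rho\B$ carries no angular information. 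Two small points worth making explicit, both of which you essentially cover: indices with $S(y_k)=\emptyset$ cannot produce a selection (equivalently, use $\dist{\emptyset}{\xb}=+\infty$), so they cannot rescue the right-hand side; and $(t_k)\downarrow 0$ in the formal sense requires passing to a monotone subsequence of a null sequence, which you note and which does not affect either the divergence of the quotients or the convergence $v_k\to v$.
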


We conclude this preparatory section with a mention concerning qualification conditions used
in the calculus being developed. Analogously to \cite{IoOut08}, our main qualification condition
will be the directional metric subregularity of the so-called feasibility mapping
associated with the considered calculus rule.
This mapping has typically the form $F(x)= \Omega - \varphi(x)$, where
$\Omega$ is a closed subset of $\R^m$ and $\varphi: \R^n \to \R^m$ is a continuous mapping.
A tool for verifying directional metric subregularity of such mappings for continuously differentiable $\varphi$
was recently established by Gfrerer and Klatte \cite[Corollary 1]{GfrKl16}
and we slightly extend this result here by allowing functions $\varphi$ to be just calm
in the prescribed direction.

\begin{proposition} \label{Pro : MetrSubregVerif}
  Let multifunction $F : \R^n \rightrightarrows \R^m$ be given by $F(x)= \Omega - \varphi(x)$,
  where $\varphi: \R^n \to \R^m$ is continuous and $\Omega \subset \R^m$ is a closed set.
  Further let $(\xb,0) \in \Gr F$ and $u \in \R^{n}$ be given and assume
  that $\varphi$ is calm at $\xb$ in direction $u$.
  Then $F$ is metrically subregular at $(\xb,0)$ in direction $u$ provided
  for all $w \in D\varphi(\xb)(u) \cap T_{\Omega}(\varphi(\xb))$ one has the implication
  \[0 \in D^*\varphi(\xb;(u,w))(\lambda), \,
  \lambda \in N_{\Omega}(\varphi(\xb);w) \ \Longrightarrow \ \lambda = 0.\]
\end{proposition}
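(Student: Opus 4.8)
The plan is to argue by contradiction, reducing directional metric subregularity of $F$ to a distance estimate for the feasibility set $M:=F^{-1}(0)=\{x\mv\varphi(x)\in\Omega\}$ and then extracting from a failing sequence exactly the configuration excluded by the qualification condition. Set $\psi(x):=\dist{\Omega}{\varphi(x)}$ and observe $\dist{F(x)}{0}=\dist{\Omega}{\varphi(x)}=\psi(x)$, so that the asserted property reads $\dist{M}{x}\leq\kappa\,\psi(x)$ on some $\xb+\mathcal V_{\rho,\delta}(u)$. If it fails, then taking $\kappa=k$ and the shrinking neighborhoods $\mathcal V_{1/k,1/k}(u)$ I obtain $x_k\in\xb+\mathcal V_{1/k,1/k}(u)$, necessarily $x_k\neq\xb$, with $\dist{M}{x_k}>k\,\psi(x_k)$. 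Writing $x_k=\xb+t_ku_k$ with $t_k\downarrow0$ and $u_k\to u$, continuity of $\varphi$ and closedness of $\Omega$ force $d_k:=\psi(x_k)>0$, while $\dist{M}{x_k}\leq\norm{x_k-\xb}$ gives $d_k/t_k\to0$.

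Next I would regularize by Ekeland's variational principle. As $\psi$ is continuous, $\psi\geq0$, $\psi(\xb)=0$ and $\psi(x_k)\leq\inf\psi+d_k$, Ekeland with radius $r_k:=\sqrt{d_k\,\dist{M}{x_k}}$ yields $\hat x_k$ with $\norm{\hat x_k-x_k}\leq r_k$, $\psi(\hat x_k)\leq\psi(x_k)$, and $\hat x_k$ a global minimizer of $x\mapsto\psi(x)+\beta_k\norm{x-\hat x_k}$, where $\beta_k:=d_k/r_k$. This choice of $r_k$ is where the directional character is decisive: from $d_k/t_k\to0$ and $\dist{M}{x_k}>kd_k$ one checks simultaneously $r_k=o(t_k)$ (so $\hat x_k=\xb+t_k\hat u_k$ with $\hat u_k\to u$, i.e. the direction survives the perturbation), $r_k<\dist{M}{x_k}$ (so $\hat x_k\notin M$, i.e. $\varphi(\hat x_k)\notin\Omega$), and $\beta_k\leq1/\sqrt k\to0$. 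Picking $z_k\in\proj{\Omega}{\varphi(\hat x_k)}$ furnishes the unit regular normal $\eta_k:=(\varphi(\hat x_k)-z_k)/\psi(\hat x_k)\in\hat N_{\Omega}(z_k)$.

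The crucial step is to transmit $\eta_k$ back through $\varphi$, even though $\varphi$ is only directionally calm and hence possibly not locally Lipschitz. The device is to abandon the composition and work on the graph: since $\dist{\Omega}{\varphi(x)}\leq\norm{\varphi(x)-z_k}$ with equality at $\hat x_k$, the pair $(\hat x_k,\varphi(\hat x_k))$ minimizes over the closed set $\Gr\varphi$ the genuinely Lipschitz function $g(x,y):=\norm{y-z_k}+\beta_k\norm{x-\hat x_k}$. Applying the generalized Fermat rule and the Mordukhovich sum rule to $g+\iota_{\Gr\varphi}$, and computing $\partial g(\hat x_k,\varphi(\hat x_k))=\beta_k\B\times\{\eta_k\}$ (the $y$-part is smooth at $\varphi(\hat x_k)\neq z_k$ with gradient $\eta_k$), I obtain $\tilde\xi_k$ with $\norm{\tilde\xi_k}\leq\beta_k$ and $(\tilde\xi_k,-\eta_k)\in N_{\Gr\varphi}(\hat x_k,\varphi(\hat x_k))$, that is $\tilde\xi_k\in D^*\varphi(\hat x_k)(\eta_k)$. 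I expect this to be the main obstacle: the naive attempt to produce a regular normal of size $\beta_k$ directly at $(\hat x_k,\varphi(\hat x_k))$ fails because $T_{\Gr\varphi}$ is nonconvex, and it is precisely the observation that $g$ is Lipschitz \emph{on the graph} (while $\psi$ need not be Lipschitz in $x$) that rescues the argument.

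Finally I would pass to the limit. By directional calmness $(\varphi(\hat x_k)-\varphi(\xb))/t_k$ is bounded, so along a subsequence it tends to some $w$, whence $(u,w)\in T_{\Gr\varphi}(\xb,\varphi(\xb))$, i.e. $w\in D\varphi(\xb)(u)$; since $\norm{z_k-\varphi(\hat x_k)}=\psi(\hat x_k)\leq d_k=o(t_k)$, also $(z_k-\varphi(\xb))/t_k\to w$ with $z_k\in\Omega$, giving $w\in T_{\Omega}(\varphi(\xb))$. Extracting $\eta_k\to\lambda$ (so $\norm{\lambda}=1$, in particular $\lambda\neq0$) yields $\lambda\in N_{\Omega}(\varphi(\xb);w)$, as $z_k\to\varphi(\xb)$ from direction $w$ and $\eta_k\in\hat N_{\Omega}(z_k)$. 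For the coderivative, a diagonal argument over the regular normals approximating the limiting normals $(\tilde\xi_k,-\eta_k)$ — selecting at each $k$ a nearby graph point whose difference quotients still tend to $(u,w)$ — places $(0,-\lambda)$ in $N_{\Gr\varphi}((\xb,\varphi(\xb));(u,w))$, because $\tilde\xi_k\to0$. Thus $w\in D\varphi(\xb)(u)\cap T_{\Omega}(\varphi(\xb))$, $\lambda\in N_{\Omega}(\varphi(\xb);w)$ with $\lambda\neq0$, and $0\in D^*\varphi(\xb;(u,w))(\lambda)$, contradicting the assumed implication. Hence $F$ is metrically subregular at $(\xb,0)$ in direction $u$.
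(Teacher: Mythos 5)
Your proof is correct, but it follows a genuinely different route from the paper's. The paper defers the proof to Section 6 and obtains it as a consequence of its own directional calculus: there the statement is reduced to metric \emph{regularity} of $F$ in direction $(u,0)$, which is characterized via the directional coderivative criterion of Gfrerer (cited as [Gfr13a, Theorem 5]), and the required coderivative $D^*F((\xb,0);(u,v))$ is then computed by the directional coderivative sum rule (Theorem \ref{The : SumRuleCoder}), writing $F$ as the sum of the constant mapping $x \rightrightarrows \Omega$ (which has the Aubin property) and $-\varphi$ (whose calmness makes the intermediate mapping inner calm). You instead give a direct, self-contained argument: negate directional subregularity to get $x_k=\xb+t_ku_k$ with $\dist{M}{x_k}>k\,\dist{\Omega}{\varphi(x_k)}$, regularize by Ekeland with the radius $r_k=\sqrt{d_k\,\dist{M}{x_k}}$ (whose three consequences $r_k=o(t_k)$, $r_k<\dist{M}{x_k}$, $\beta_k\to 0$ you verify correctly), lift the penalized minimization to $\Gr\varphi$ so that the objective $\norm{y-z_k}+\beta_k\norm{x-\hat x_k}$ is genuinely Lipschitz even though $\varphi$ is only directionally calm, apply the Fermat rule with the standard (non-directional) Mordukhovich sum rule, and finish with a diagonal selection of regular normals to land in the directional limiting objects $N_{\Omega}(\varphi(\xb);w)$ and $D^*\varphi(\xb;(u,w))$; the directional calmness enters exactly where it should, to bound the quotients $(\varphi(\hat x_k)-\varphi(\xb))/t_k$ and produce $w\in D\varphi(\xb)(u)\cap T_{\Omega}(\varphi(\xb))$. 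What each approach buys: the paper's derivation is short once Theorem \ref{The : SumRuleCoder} is available and simultaneously yields the stronger directional metric \emph{regularity} statement (part 2 of the theorem in Subsection 6.1), but it leans on an external characterization and on the paper's own calculus --- which is delicate, since Proposition \ref{Pro : MetrSubregVerif} is itself invoked (through the remark following Corollary \ref{Cor : Cap}) to justify the qualification condition \eqref{eq : capCQ} inside the proof of Theorem \ref{The : ConstSetForw}, on which the coderivative rules ultimately rest. Your Ekeland-based proof uses only classical non-directional tools (Ekeland, projection normals, the Lipschitz sum rule) and none of the paper's directional theorems, so it could be placed directly in Section 2 where the proposition is stated, removing any appearance of circular dependence; its price is that it is longer and, as written, yields only the subregularity statement, not the metric regularity version.
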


The proof is based on the sum rule for coderivatives of multifunctions
and will be presented among other applications in Section 6.

\section{Calculus for directional limiting normal cones}

  Let $C \subset \R^n$ be a closed set and $\xb \in C$. If $\xb \in C \setminus \bd C$ then $N_C(\xb;h) = \{0\}$ for every $h \in \R^n$.
  Since $N_C(\xb;h) = \emptyset$ for $h \notin T_C(\xb)$
  and $N_C(\xb;h) = \{0\}$ for $h \in T_C(\xb) \setminus T_{\bd C}(\xb)$, it follows that for every set of directions
  $A \subset \R^n$ it holds that
  \begin{equation} \label{eq : ImportantDirections}
    \bigcup_{h \in A} N_C(\xb;h) = \bigcup_{h \in A \cap T_C(\xb)} N_C(\xb;h) = \bigcup_{h \in A \cap T_{\bd C}(\xb)} N_C(\xb;h).
  \end{equation}
  This observation allows us to consider only the indispensable directions in our estimates, as one can see in
  Theorems \ref{The : ConstSet} and \ref{The : ConstSetForw}.

\begin{theorem}[Pre-image sets] \label{The : ConstSet}
  Let $Q \subset \R^m$ be closed, consider a continuous function $\varphi: \R^n \to \R^m$ and set $C := \varphi^{-1}(Q)$.
  Assume further that the set-valued mapping $F: \R^n \rightrightarrows \R^m$ given by $F(x) = Q - \varphi(x)$ is
  metrically subregular at $(\xb,0)$ in some direction $h \in \R^n$. Then
  \[N_C(\xb;h) \subset \Big( \bigcup\limits_{v \in D\varphi(\xb)(h) \atop \cap T_Q(\varphi(\xb))}
  D^*\varphi(\xb;(h,v)) N_Q(\varphi(\xb);v) \Big) \cup
  \Big( \bigcup\limits_{v \in D\varphi(\xb)(0) \cap \mathbb{S} \atop \cap T_Q(\varphi(\xb))}
    D^*\varphi(\xb;(0,v)) N_Q(\varphi(\xb);v) \Big).\]
  Moreover, if $\varphi$ is calm at $\xb$ in direction $h$ we obtain a better estimate
  \[N_C(\xb;h) \subset \bigcup\limits_{v \in D\varphi(\xb)(h) \atop \cap T_Q(\varphi(\xb))}
  D^*\varphi(\xb;(h,v)) N_Q(\varphi(\xb);v).\]
\end{theorem}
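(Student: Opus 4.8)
The plan is to start from a normal $\xi\in N_C(\xb;h)$, unravel the definition to obtain $t_k\downarrow 0$, $u_k\to h$ and $\xi_k\to\xi$ with $\xi_k\in\hat N_C(x_k)$, where $x_k:=\xb+t_ku_k\in C$, and to set $y_k:=\varphi(x_k)\in Q$. Since $F^{-1}(0)=C$ and $\dist{F(x)}{0}=\dist{Q}{\varphi(x)}$, directional metric subregularity of $F$ furnishes $\kappa>0$ and a directional neighbourhood $\mathcal U$ of $h$ with $\dist{C}{x}\le\kappa\,\dist{Q}{\varphi(x)}$ for all $x\in\xb+\mathcal U$. This is the only place the qualification condition enters, and it is what lets me trade the geometry of $C$ for that of $Q$.

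Next I would localize. Fixing a sequence $\epsilon_k\downarrow 0$ and combining $\xi_k\in\hat N_C(x_k)$ with the projection $\hat x:=\proj{C}{x}$ and the estimate above, a short computation shows that $x_k$ is a local minimizer of
\[\psi_k(x):=-\skalp{\xi_k,x-x_k}+(\norm{\xi_k}+\epsilon_k)\kappa\,\dist{Q}{\varphi(x)}+\epsilon_k\norm{x-x_k}\]
on a small ball around $x_k$. Here lies the first technical point to watch: the subregularity estimate holds only on $\xb+\mathcal U$, so the admissible ball shrinks with $k$; since $x_k$ lies in the interior of the directional neighbourhood, one can nonetheless pick $r_k$ with $B(x_k,r_k)\subset\xb+\mathcal U$, making $x_k$ a genuine local minimizer of $\psi_k$.

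I would then transfer the problem to the graph. Writing $\psi_k$ as a function of $(x,y)$ subject to $y=\varphi(x)$ and adding the indicator of $\Gr\varphi$, the objective is Lipschitz, so the limiting sum rule applies without any further qualification and yields $b_k\in\epsilon_k\B$ and $\zeta_k\in(\norm{\xi_k}+\epsilon_k)\kappa\,\partial\dist{Q}{\cdot}(y_k)$ with $(\xi_k-b_k,-\zeta_k)\in N_{\Gr\varphi}(x_k,y_k)$. Because $y_k\in Q$ one has $\partial\dist{Q}{\cdot}(y_k)\subset N_Q(y_k)\cap\B$, so $\zeta_k\in N_Q(y_k)$ while $\norm{\zeta_k}\le(\norm{\xi_k}+\epsilon_k)\kappa$ stays bounded; passing to a subsequence gives $\zeta_k\to\eta$ and $\xi_k-b_k\to\xi$.

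The last and conceptually decisive step is the direction bookkeeping, which produces exactly the two unions. Applying \eqref{eqn : OmegaDef}, \eqref{eqn : OmegaInfDef} to $a_k:=\varphi(x_k)-\varphi(\xb)$: if $\Gamma(a_k,t_k)\neq\emptyset$ I pass to a subsequence with $a_k/t_k\to v$, so $(x_k-\xb,y_k-\varphi(\xb))/t_k\to(h,v)$ and $y_k\to\varphi(\xb)$ from direction $v$, whence $v\in D\varphi(\xb)(h)\cap T_Q(\varphi(\xb))$; if instead $\Gamma(a_k,t_k)=\emptyset$, i.e.\ $t_k/\norm{a_k}\to0$, I rescale by $\norm{a_k}$ and use $\Gamma^{\infty}$ to extract $a_k/\norm{a_k}\to v\in\mathbb S$, which yields the direction $(0,v)$ with $v\in D\varphi(\xb)(0)\cap\mathbb S\cap T_Q(\varphi(\xb))$. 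In either case $\zeta_k\in N_Q(y_k)$ with $y_k\to\varphi(\xb)$ from $v$, and $(\xi_k-b_k,-\zeta_k)\in N_{\Gr\varphi}(x_k,y_k)$ along a sequence converging from $(h,v)$, respectively $(0,v)$. To conclude $\eta\in N_Q(\varphi(\xb);v)$ and $\xi\in D^*\varphi(\xb;(h,v))(\eta)$ (resp.\ $D^*\varphi(\xb;(0,v))(\eta)$) I need to upgrade limiting normals taken along a directional sequence to genuine \emph{directional} limiting normals; this follows by a diagonal argument selecting, for each $k$, a regular normal close enough to the limiting one that its base point stays in the prescribed directional neighbourhood while the convergence rate is preserved. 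Carrying out this upgrade consistently for $\Gr\varphi$ and for $Q$, so that the \emph{same} $\eta$ appears in both, is the main obstacle. Finally, if $\varphi$ is calm at $\xb$ in direction $h$, then $\norm{a_k}\le\kappa'\norm{x_k-\xb}=\kappa' t_k\norm{u_k}$ forces $\Gamma(a_k,t_k)\neq\emptyset$, so the second case never occurs and only the first union survives, giving the sharper estimate.
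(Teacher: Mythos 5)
Your proposal is correct, and its skeleton coincides with the paper's: penalize via the directional metric subregularity estimate, derive optimality conditions for the penalized problem at the moving points, and then do the $\Gamma$/$\Gamma^{\infty}$ direction bookkeeping, with calmness killing the $\Gamma^{\infty}$ branch. Where you genuinely diverge is the middle step. The paper keeps everything at the Fr\'echet level: it applies \emph{fuzzy} optimality conditions (Borwein--Zhu / Mordukhovich) with tolerance $\eta_k = t_k^2$ to the problem posed over $\Gr\varphi \times Q$ with three independent variables, which hands it regular normals at points within $t_k^2$ of $(x_k,\varphi(x_k),\varphi(x_k))$; dividing by $t_k$ (or $\norm{a_k}$) these perturbations vanish, so the limits are directional limiting normals by definition, with no further lemma needed. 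You instead apply the \emph{exact} limiting sum rule at $(x_k,y_k)$ itself (legitimate, since the penalty is Lipschitz and $\Gr\varphi$ is closed), obtaining limiting normals $(\xi_k-b_k,-\zeta_k)\in N_{\Gr\varphi}(x_k,y_k)$ and $\zeta_k \in N_Q(y_k)\cap(\norm{\xi_k}+\epsilon_k)\kappa\B$, and then must upgrade ``limiting normals along a directionally convergent sequence'' to directional limiting normals. Your diagonal argument does work: for each $k$ pick $(\tilde x_k,\tilde y_k)\in\Gr\varphi$ and a regular normal within $t_k/k$ and $1/k$, respectively, of the base point and the limiting normal; the base points then still converge from $(h,v)$ (resp. $(0,v)$ after rescaling by $\norm{a_k}$) and the regular normals converge to the same limits. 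Moreover, the ``consistency'' issue you flag as the main obstacle --- getting the \emph{same} $\eta$ in both memberships --- is actually automatic and not an obstacle at all: $\eta$ is fixed as the limit of the $\zeta_k$, and the two upgrades are performed independently; each only needs \emph{some} sequence of regular normals converging to its prescribed limit ($\eta$ for $Q$, $(\xi,-\eta)$ for $\Gr\varphi$), so the conjunction $\xi\in D^*\varphi(\xb;(h,v))(\eta)$, $\eta\in N_Q(\varphi(\xb);v)$ holds without any coupling of the two approximating sequences. In summary: the paper's fuzzy-calculus route is designed to make the directional limit fall out of the definitions directly, while your route uses heavier (but more familiar) exact limiting calculus at the price of one extra, standard diagonal lemma; both are sound.
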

\begin{proof}
  Let $x^* \in N_C(\xb;h)$ and consider $(t_k) \downarrow 0$, $h_k \to h$, $x_k^* \to x^*$ with
  $x_k:=\xb + t_k h_k \in C$ and $x_k^* \in \hat N_C(x_k)$.
  Since $x_k^* \in \hat N_C(x_k)$, for a fixed $k$ and for every
  $\varepsilon > 0$ there exists a real $r_\varepsilon > 0$ such that $\skalp{x_k^*,x-x_k} \leq \varepsilon \norm{x-x_k}$ for all
  $x \in C \cap \B_{r_\varepsilon}(x_k)$.
  Subregularity assumption yields existence of directional neighborhood $\mathcal{U}$ of $h$ and $\kappa > 0$ such that
  $\dist{C}{x} \leq \kappa \dist{Q}{\varphi(x)}$
  holds for all $x \in \xb + \mathcal{U}$ and for given sufficiently large $k$ and given $\varepsilon$
  we choose $r_\varepsilon$ such that $\B_{r_\varepsilon/2}(x_k) \subset \xb + \mathcal{U}$.

  Next we claim that for all $x \in \B_{r_{\varepsilon}/2}(x_k)$ it holds that
  \[\varepsilon \norm{x-x_k} - \skalp{x_k^*,x-x_k} + (\norm{x_k^*} + \varepsilon) \kappa \dist{Q}{\varphi(x)} \geq 0.\]
  Indeed, for $x \in \B_{r_{\varepsilon}/2}(x_k)$ we have $\norm{x-x_k} \leq r_{\varepsilon}/2$ and hence there exists
  $\tilde x \in C \cap \B_{r_\varepsilon}(x_k)$ with $\norm{x- \tilde x} = \dist{C}{x}$. Thus,
  \[\skalp{x_k^*,x-x_k} - \varepsilon \norm{x-x_k} \leq
  (\norm{x_k^*} + \varepsilon) \dist{C}{x} + \skalp{x_k^*,\tilde x-x_k} - \varepsilon \norm{\tilde x-x_k}
  \leq (\norm{x_k^*} + \varepsilon) \kappa \dist{Q}{\varphi(x)},\]
  showing the claimed inequality.

  Now we consider $\varepsilon_k \downarrow 0$ and conclude that
  $(x_k, \varphi(x_k), \varphi(x_k))$ is a local solution of the problem
  \begin{equation}\label{eq : AuxProb}
\min f(x,y,q) := \varepsilon_k \norm{x-x_k} - \skalp{x_k^*,x-x_k} + (\norm{x_k^*} + \varepsilon_k) \kappa \norm{y - q}
  \quad \textrm{s.t.} \quad (x,y,q) \in \Gr \varphi \times Q.
   \end{equation}
  The fuzzy optimality conditions for problem \eqref{eq : AuxProb}, cf. \cite[Theorem 2.7]{BZ}, \cite[Lemma 2.32]{Mo06a},
  state that  to every $\eta_k > 0$ there exist triples
  $(x_{i,k}, y_{i,k}, q_{i,k}) \in (x_k, \varphi(x_k), \varphi(x_k)) + \eta_k \B, \ i =1,2,$ with
  $\vert f(x_{1,k}, y_{1,k}, q_{1,k}) - f(x_k, \varphi(x_k), \varphi(x_k)) \vert \leq \eta_k$ and
  $(x_{2,k}, y_{2,k}, q_{2,k}) \in \Gr \varphi \times Q$ such that
  \begin{eqnarray}
 \left [ \begin{array}{l}
  x^{*}_{k} - \varepsilon_{k}\xi_{k}\\
  -(\norm{ x^{*}_{k}} + \varepsilon_{k})\kappa \nu_{k}
  \end{array} \right ]
   & \in &  \hat N_{\Gr \varphi} (x_{2,k}, y_{2,k})+\eta_{k}\B, \label{eqn : Fuzzy1}\\
 (\norm{ x^{*}_{k}} + \varepsilon_{k})\kappa \nu_{k} & \in & \hat N_{Q}(q_{2,k})+\eta_k \B   \label{eqn : Fuzzy2}
  \end{eqnarray}
  for some $\xi_k,\nu_k \in \B$. We take $\eta_k := t_k^2 \downarrow 0$ and consider the limiting process for $k \rightarrow \infty$.
  Since $(\norm{x_k^*} + \varepsilon_k) \kappa \nu_k$ is a bounded sequence, by passing to a subsequence we
  can assume that $(\norm{x_k^*} + \varepsilon_k) \kappa \nu_k $ converges to some $\bar z$.

  Denoting $a_k := \varphi(\xb + t_k h_k)-\varphi(\xb)$, we define direction $v$ to be an element of either
  $\Gamma(a_k,t_k)$ or $\Gamma^{\infty}(a_k,t_k)$, see \eqref{eqn : OmegaDef}-\eqref{eqn : OmegaInfDef}.
  Let us first consider the case $v \in \Gamma(a_k,t_k)$ and assume $a_k/t_k \to v$.
  We show that $\bar z \in N_Q(\varphi(\xb);v)$ and $x^* \in D^*\varphi(\xb;(h,v)) (\bar z)$.
  By virtue of \eqref{eqn : Fuzzy2} there is a sequence of vectors $z_k \in \hat N_{Q}(q_{2,k})$ such that
  \[(\norm{x_k^*} + \varepsilon_k) \kappa \nu_k \in z_k + \eta_k \B.\]
  Since $\eta_k \to 0$, we obtain $z_k \to \bar z$. Taking into account that, by virtue of the fuzzy optimality conditions,
   \[\norm{q_{2,k} - \varphi(x_k)} \leq \eta_k = t_k^2,\]
    we obtain
  \begin{equation} \label{eq : ConvOfFrac}
    \norm{(q_{2,k} - \varphi(\xb))/t_k - v} \leq \norm{(q_{2,k} - \varphi(x_k))/t_k}
    + \norm{(\varphi(\xb + t_k h_k) - \varphi(\xb))/t_k- v} \to 0.
  \end{equation}
  Since $\varphi(\xb) + t_k(q_{2,k} - \varphi(\xb))/t_k = q_{2,k} \in Q$, the claimed relation
  $\bar z \in N_Q(\varphi(\xb);v)$ follows.

  In order to show the second claim, we observe that $x_k^* - \varepsilon_k \xi_k \to x^*$
  and exploit in the same way as above relation \eqref{eqn : Fuzzy1}
  to show the existence of $(w_k,-u_k) \in \hat N_{\Gr \varphi}(x_{2,k}, y_{2,k})$
  with $(w_k,-u_k) \to (x^*,-\bar z)$. Again, one has that
  \[
  \norm{y_{2,k} - \varphi(x_{k})} \leq \eta_{k}= t^{2}_{k}
  \]
  and hence relation \eqref{eq : ConvOfFrac} holds with $q_{2,k}$ replaced by $y_{2,k}$.
  It follows that $(y_{2,k} - \varphi(\xb))/t_k \to v$ and similarly we conclude also $(x_{2,k} - \xb)/t_k \to h$.
  Thus, again, since $((\xb,\varphi(\xb)) + t_k(x_{2,k} - \xb)/t_k,(y_{2,k} - \varphi(\xb))/t_k) = (x_{2,k}, y_{2,k}) \in \Gr \varphi$,
  we obtain $(x^*,-\bar z) \in N_{\Gr \varphi}(\xb,\varphi(\xb);(h,v))$ and hence
  $x^* \in D^*\varphi(\xb;(h,v)) (\bar z)$ with $\bar z \in N_Q(\varphi(\xb);v)$.

  Finally, we consider the case $v \in \Gamma^{\infty}(a_k,t_k)$, assume $a_k/\norm{a_k} \to v$
  and show that $\bar z \in N_Q(\varphi(\xb);v)$ and $x^* \in D^*\varphi(\xb;(0,v)) (\bar z)$.
  Note that in this case we have $v \in \mathbb S$ and
  $t_k/ \norm{a_k} \to 0$ implying $t_k < \norm{a_k}$ for sufficiently large $k$.
  Hence, we proceed as in the previous case with $t_k$ replaced by $\norm{a_k}$
  and obtain the same result, the only difference being $(x_{2,k} - \xb)/\norm{a_k} \to 0$,
  showing the claimed relations.
  Observation \eqref{eq : ImportantDirections} now completes the proof of the first statement.

  The calmness assumption yields boundedness of $(\varphi(\xb + t_k h_k)-\varphi(\xb))/t_k$ and hence
  we always have $v \in \Gamma(a_k,t_k) \neq \emptyset$ and thus we only need to consider the first case.
  The proof is complete.
\end{proof}

\begin{corollary} \label{Cor : Cap}
  Let $C_i \subset \R^n, i=1,\ldots,l$,  be closed and set $C=\cap_{i=1}^l C_i$.
  Assume further that the set-valued mapping $F: \R^n \rightrightarrows \R^{nl}$ given by $F(x) = \prod_{i=1}^l (C_i - x)$ is
  metrically subregular at $(\xb,(0,\ldots,0))$ in some direction $h \in \R^n$. Then
  \[N_C(\xb;h) \subset \sum_{i=1}^l N_{C_i}(\xb;h).\]
\end{corollary}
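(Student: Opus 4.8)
The plan is to realize $C$ as a pre-image under a linear map and then apply Theorem \ref{The : ConstSet} directly. Define $\varphi : \R^n \to \R^{nl}$ by $\varphi(x) := (x,\ldots,x)$ (the $l$-fold diagonal) and put $Q := C_1 \times \cdots \times C_l \subset \R^{nl}$. Then $Q - \varphi(x) = \prod_{i=1}^l (C_i - x) = F(x)$ and $\varphi^{-1}(Q) = \bigcap_{i=1}^l C_i = C$, so the metric subregularity of $F$ at $(\xb,(0,\ldots,0))$ in direction $h$ assumed here is precisely the qualification condition of Theorem \ref{The : ConstSet}. Since $\varphi$ is linear, it is globally Lipschitz and hence calm at $\xb$ in direction $h$, which lets me invoke the sharper (second) estimate of that theorem.

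Next I would compute the two differential objects appearing in the estimate via Remark \ref{Rem: IdentityMapping}. As $\varphi$ is continuously differentiable with $\nabla\varphi(\xb)u = (u,\ldots,u)$, the graphical derivative $D\varphi(\xb)(h)$ reduces to the single vector $v := (h,\ldots,h)$, and for any $\eta = (\eta_1,\ldots,\eta_l) \in \R^{nl}$ (with $\eta_i \in \R^n$) one has $D^*\varphi(\xb;(h,v))(\eta) = (\nabla\varphi(\xb))^T \eta = \sum_{i=1}^l \eta_i$, the adjoint of the diagonal map being the summation map. Consequently the union in Theorem \ref{The : ConstSet} runs over at most the single direction $v = (h,\ldots,h)$; if $v \notin T_Q(\varphi(\xb))$ the index set is empty, forcing $N_C(\xb;h) = \emptyset$ and making the claim trivial, so I may assume $v \in T_Q(\varphi(\xb))$. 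Note that this already shows a product formula for tangent cones is \emph{not} needed — only the collapse of the union to a single direction matters.

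To conclude I would feed in the product structure of $Q$. Applying Proposition \ref{Pro : ProductSet} at $\varphi(\xb) = (\xb,\ldots,\xb)$ in the direction $v = (h,\ldots,h)$ (so that each component direction equals $h$) gives $N_Q(\varphi(\xb);v) \subset N_{C_1}(\xb;h) \times \cdots \times N_{C_l}(\xb;h)$. Combining this with the adjoint computation yields
\begin{align*}
N_C(\xb;h) &\subset D^*\varphi(\xb;(h,v))\, N_Q(\varphi(\xb);v) \\
&\subset \Big\{ \textstyle\sum_{i=1}^l \eta_i \mv \eta_i \in N_{C_i}(\xb;h) \Big\} = \sum_{i=1}^l N_{C_i}(\xb;h),
\end{align*}
which is the assertion. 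The only steps requiring any care are the correct identification of $(\nabla\varphi(\xb))^T$ as the summation map and the correct matching of components so that Proposition \ref{Pro : ProductSet} applies with all component directions equal to $h$; neither is a genuine obstacle, so this corollary is really a clean specialization of the pre-image theorem rather than an independent argument.
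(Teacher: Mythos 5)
Your proposal is correct and follows exactly the paper's own route: realizing $C=\varphi^{-1}(Q)$ for the diagonal map $\varphi(x)=(x,\ldots,x)$ and $Q=\prod_{i=1}^l C_i$, then combining the calm/Lipschitz case of Theorem \ref{The : ConstSet} with Remark \ref{Rem: IdentityMapping} (adjoint of the diagonal map is summation) and Proposition \ref{Pro : ProductSet}. The paper states this in one line; your write-up simply makes the same three ingredients explicit, including the correct trivial handling of the case $(h,\ldots,h)\notin T_Q(\varphi(\xb))$.
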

\begin{proof}
  Note that $C=\varphi^{-1}(Q)$ for $\varphi : \R^n \to \R^{nl}$ given by $\varphi(x) := (x,\ldots,x)$($l$ copies) and
  $Q := \prod_{i=1}^l C_i$. Hence, the statement follows from Proposition \ref{Pro : ProductSet}, Theorem \ref{The : ConstSet}
  and Remark \ref{Rem: IdentityMapping}.
\end{proof}

Taking into account Proposition \ref{Pro : MetrSubregVerif}, as well as Remark \ref{Rem: IdentityMapping}
and Proposition \ref{Pro : ProductSet}, the assumption of metric subregularity of $F$ is implied by the condition
\begin{equation} \label{eq : capCQ}
\sum_{i=1}^l \lambda_i = 0, \lambda_i \in N_{C_i}(\xb;h) \ \Longrightarrow \ \lambda_i = 0.
\end{equation}

\begin{corollary}[Sets with constraint structure] \label{Cor : ConstSet+}
  Let $P \subset \R^n$ and $Q \subset \R^m$ be closed, consider a continuous function $\varphi: \R^n \to \R^m$
  and set $C := \{ x \in P \mv \varphi(x) \in Q\}$.
  Assume further that the set-valued mapping $F: \R^n \rightrightarrows \R^n \times \R^m$
  given by $F(x) = P \times Q - (x,\varphi(x))$ is metrically subregular at $(\xb,0,0)$ in some direction $h \in \R^n$. Then
  \begin{eqnarray*}
    N_C(\xb;h) & \subset & \Big( \bigcup\limits_{v \in D\varphi(\xb)(h) \atop \cap T_Q(\varphi(\xb))}
    D^*\varphi(\xb;(h,v)) N_Q(\varphi(\xb);v) + N_P(\xb; h) \Big) \\
    && \cup
    \Big( \bigcup\limits_{v \in D\varphi(\xb)(0) \cap \mathbb{S} \atop \cap T_Q(\varphi(\xb))}
    D^*\varphi(\xb;(0,v)) N_Q(\varphi(\xb);v) + N_P(\xb; 0) \Big).
  \end{eqnarray*}
  Moreover, if $\varphi$ is calm at $\xb$ in direction $h$ we obtain
  \[N_C(\xb;h) \subset \bigcup\limits_{v \in D\varphi(\xb)(h) \atop \cap T_Q(\varphi(\xb))}
    D^*\varphi(\xb;( h,v)) N_Q(\varphi(\xb);v) + N_P(\xb; h).\]
\end{corollary}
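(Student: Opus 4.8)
The plan is to reduce the assertion to the pre-image formula of Theorem~\ref{The : ConstSet}. I would introduce the auxiliary mapping $\Phi:=(\I,\varphi):\R^n\to\R^n\times\R^m$, $\Phi(x)=(x,\varphi(x))$, and the enlarged target set $\tilde Q:=P\times Q$. Then $C=\{x\in P\mv \varphi(x)\in Q\}=\Phi^{-1}(\tilde Q)$, and the feasibility mapping associated with this representation, namely $x\mapsto \tilde Q-\Phi(x)=P\times Q-(x,\varphi(x))$, coincides with $F$. Consequently the directional metric subregularity hypothesis carries over verbatim and Theorem~\ref{The : ConstSet} applies to $\Phi$ and $\tilde Q$. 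The remaining work is to rewrite the resulting inclusion in terms of the original data $\varphi,P,Q$.

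First I would assemble the geometric ingredients. Since $\Gr\Phi=\{(x,x,\varphi(x))\mv x\in\R^n\}$ has its first two coordinate blocks tied together, one gets $D\Phi(\xb)(h)=\{(h,v)\mv v\in D\varphi(\xb)(h)\}$ and $T_{\tilde Q}(\Phi(\xb))=T_P(\xb)\times T_Q(\varphi(\xb))$. Hence a direction $w\in D\Phi(\xb)(h)\cap T_{\tilde Q}(\Phi(\xb))$ is precisely $w=(h,v)$ with $v\in D\varphi(\xb)(h)\cap T_Q(\varphi(\xb))$ and $h\in T_P(\xb)$, while in the singular (second) union it reads $w=(0,v)$ with $v\in D\varphi(\xb)(0)\cap\mathbb S\cap T_Q(\varphi(\xb))$. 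For the normals over the product I would invoke Proposition~\ref{Pro : ProductSet}, which gives $N_{\tilde Q}(\Phi(\xb);(h,v))\subset N_P(\xb;h)\times N_Q(\varphi(\xb);v)$, so that every $\eta\in N_{\tilde Q}(\Phi(\xb);(h,v))$ splits as $\eta=(\zeta_1,\zeta_2)$ with $\zeta_1\in N_P(\xb;h)$ and $\zeta_2\in N_Q(\varphi(\xb);v)$.

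The main obstacle is the computation of the directional coderivative of $\Phi$; I expect to establish
\[ D^*\Phi(\xb;(h,(h,v)))(\zeta_1,\zeta_2)=\zeta_1+D^*\varphi(\xb;(h,v))(\zeta_2), \]
and the analogue with $h$ replaced by $0$. The regular version is immediate from polarity: because $T_{\Gr\Phi}(x,x,\varphi(x))=\{(u,u,v')\mv v'\in D\varphi(x)(u)\}$, the defining inequality $\skalp{\xi-\zeta_1,u}-\skalp{\zeta_2,v'}\le0$ over all such $(u,v')$ shows that $(\xi,-\zeta_1,-\zeta_2)\in\hat N_{\Gr\Phi}(x,x,\varphi(x))$ holds iff $(\xi-\zeta_1,-\zeta_2)\in\hat N_{\Gr\varphi}(x,\varphi(x))$, i.e. $\xi\in\zeta_1+\hat D^*\varphi(x)(\zeta_2)$; this is also consistent with the identity block contributing $\zeta_1$ via Remark~\ref{Rem: IdentityMapping}. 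The directional limiting passage is then pure bookkeeping of sequences: a sequence in $\Gr\Phi$ converging to $(\xb,\xb,\varphi(\xb))$ from direction $(h,h,v)$ corresponds bijectively to a sequence in $\Gr\varphi$ converging to $(\xb,\varphi(\xb))$ from $(h,v)$, and the regular normals transform under $(\xi_k,-\zeta_{1,k},-\zeta_{2,k})\mapsto(\xi_k-\zeta_{1,k},-\zeta_{2,k})$; letting $k\to\infty$ yields the displayed identity. The only point requiring genuine care here is to verify that the directional neighborhoods and the componentwise convergence of the three coordinate blocks are compatible, the remainder being formal.

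Finally I would assemble the pieces. Substituting the coderivative identity into the estimate of Theorem~\ref{The : ConstSet} and using the product splitting $\zeta_1\in N_P(\xb;h)$, $\zeta_2\in N_Q(\varphi(\xb);v)$ turns the first union into $\bigcup_v D^*\varphi(\xb;(h,v))N_Q(\varphi(\xb);v)+N_P(\xb;h)$, taken over $v\in D\varphi(\xb)(h)\cap T_Q(\varphi(\xb))$, and the second into the analogous union built from $(0,v)$ and $N_P(\xb;0)$; this is exactly the claimed inclusion. The superfluous requirement $h\in T_P(\xb)$ causes no discrepancy, since when it fails one has $N_P(\xb;h)=\emptyset$ and the corresponding summand is empty anyway. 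For the sharper estimate it suffices to note that calmness of $\varphi$ at $\xb$ in direction $h$ forces calmness of $\Phi=(\I,\varphi)$ at $\xb$ in direction $h$, because the identity block is globally Lipschitz and hence $\norm{\Phi(x)-\Phi(\xb)}\le(1+\kappa)\norm{x-\xb}$ on the relevant directional neighborhood. The improved conclusion of Theorem~\ref{The : ConstSet} then applies, the singular union drops out, and only the first union remains.
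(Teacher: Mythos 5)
Your proposal is correct and follows essentially the same route as the paper's own proof, which applies Theorem~\ref{The : ConstSet} to $\Psi(x):=(x,\varphi(x))$ and $\tilde Q:=P\times Q$ and invokes the product rules for tangent and normal cones; your write-up merely fills in the coderivative identity $D^*\Psi(\xb;(h,(h,v)))(\zeta_1,\zeta_2)=\zeta_1+D^*\varphi(\xb;(h,v))(\zeta_2)$ and the calmness transfer that the paper leaves implicit. One small inaccuracy: in general only the inclusion $T_{P\times Q}(\xb,\varphi(\xb))\subset T_P(\xb)\times T_Q(\varphi(\xb))$ holds (as the paper cites from \cite[Proposition 6.41]{RoWe98}), not equality, but since your argument uses only this inclusion to enlarge the index set of the union, the proof is unaffected.
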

\begin{proof}
  It is sufficient to apply Theorem \ref{The : ConstSet} to the set
  $\tilde C = \Psi^{-1}(\tilde Q)$ at $\bar x$ with
  $\Psi(x) := (x,\varphi(x))$ and $\tilde Q = P \times Q$ and observe that
  $T_{\tilde Q}(\xb,\varphi(\xb)) \subset T_P(\xb) \times T_Q(\varphi(\xb))$ (\cite[Proposition 6.41]{RoWe98}).
\end{proof}

Assuming the calmness of $\varphi$ at $\xb$ in direction $h$, Proposition \ref{Pro : MetrSubregVerif} yields that the condition
\[\lambda_1 \in -D^*\varphi(\xb;(h,v))(\lambda_2) \cap N_{P}(\xb;h), \,
\lambda_2 \in N_{Q}(\varphi(\xb);v) \ \Longrightarrow \ \lambda_1, \lambda_2 = 0,\]
fulfilled for all $v \in D\varphi(\xb)(h) \cap T_{Q}(\varphi(\xb))$,
implies the required metric subregularity of $F$.

\begin{theorem}[Image sets] \label{The : ConstSetForw}
  Consider a closed set $C \subset \R^n$ and a continuous mapping $\varphi: \R^n \to \R^l$,
  set $Q = \varphi(C)$ and consider $\yb \in Q$ and a direction $v \in \R^l$.
  Let $\Psi : \R^l \rightrightarrows \R^n$ be given by $\Psi(y) := \varphi^{-1}(y) \cap C$.
  Assume that $\varphi: \R^n \to \R^l$ is Lipschitz continuous near every $\xb \in \Psi(\yb)$
  in all directions $h \in D\Psi(\yb,\xb)(v)$ and $h \in D\Psi(\yb,\xb)(0) \cap \mathbb{S}$.
  If $\Psi$ is inner semicompact at $\yb$ w.r.t. $Q$ in direction $v$, then
  \begin{eqnarray}\label{eqn : ConstSetForw}
    N_Q(\yb;v) & \subset & \bigcup\limits_{\xb \in \Psi(\yb)} \bigg(
    \Big( \bigcup\limits_{h \in \{h \mv v \in D\varphi(\xb)(h) \} \atop \cap T_C(\xb)}
    \{y^* \mv D^*\varphi(\xb;(h,v))(y^*) \cap N_C(\xb;h) \neq \emptyset\} \Big) \bigg. \\ \nonumber
    && \bigg. \qquad \cup
    \Big( \bigcup\limits_{h \in \{h \in \mathbb{S} \mv 0 \in D\varphi(\xb)(h) \} \atop \cap T_C(\xb)}
    \{y^* \mv D^*\varphi(\xb;(h,0))(y^*) \cap N_C(\xb;h) \neq \emptyset\} \Big) \bigg).
  \end{eqnarray}
  Moreover, if there exists $\xb \in \Psi(\yb)$ such that $\Psi$ is inner semicontinuous at $(\yb,\xb)$ w.r.t. $Q$ in direction $v$,
  then estimate \eqref{eqn : ConstSetForw} holds with this $\xb$, i.e., the union over $\xb \in \Psi(\yb)$ is superfluous.
  Finally, if there exists $\xb \in \Psi(\yb)$ such that $\Psi$ is inner calm at $(\yb,\xb)$ w.r.t. $Q$ in direction $v$, then
  \eqref{eqn : ConstSetForw} reduces to
  \[N_Q(\yb;v) \subset \bigcup\limits_{h \in \{h \mv v \in D\varphi(\xb)(h) \} \atop \cap T_C(\xb)}
    \{y^* \mv D^*\varphi(\xb;(h,v))(y^*) \cap N_C(\xb;h) \neq \emptyset\}.\]
\end{theorem}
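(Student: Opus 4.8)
The plan is to prove the inner-calm reduction directly, reading off from inner calmness the two simplifications that separate it from the inner-semicompact case. Inner calmness at a single $(\yb,\xb)$ plays the role of inner semicontinuity, so it already fixes one $\xb \in \Psi(\yb)$ and removes the union over $\Psi(\yb)$; in addition, by the characterization of inner calmness recorded just above it guarantees $\Gamma(x_k-\xb,t_k) \neq \emptyset$, which keeps the limiting direction $h$ finite and thereby eliminates the second union built on the sphere directions with $0 \in D\varphi(\xb)(h)$. The remaining, and principal, work is the transfer of a regular normal to the image $Q = \varphi(C)$ into a directional normal to $\Gr\varphi$ and to $C$; this part is common to all three assertions of the theorem.

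Concretely, I would take $y^* \in N_Q(\yb;v)$ together with the defining sequences $(t_k)\downarrow 0$, $v_k \to v$, $y_k^* \to y^*$ satisfying $y_k := \yb + t_k v_k \in Q$ and $y_k^* \in \hat N_Q(y_k)$. As $y_k \to \yb$ from direction $v$ inside $Q$ and $\Psi$ is inner calm at $(\yb,\xb)$ w.r.t. $Q$ in direction $v$, I pass to a subsequence and pick $x_k \in \Psi(y_k) = \varphi^{-1}(y_k) \cap C$ with $\Gamma(x_k - \xb,t_k) \neq \emptyset$, so that after a further subsequence $x_k = \xb + t_k h_k$ with $h_k \to h$. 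Since $x_k \in C$ we get $h \in T_C(\xb)$, and because $(\varphi(x_k)-\varphi(\xb))/t_k = v_k \to v$ while $(x_k-\xb)/t_k \to h$, the pair $(h,v)$ lies in $T_{\Gr\varphi}(\xb,\yb)$, i.e. $v \in D\varphi(\xb)(h)$; hence $h$ already ranges over the index set of the claimed union, and --- this is the point of inner calmness --- it is genuinely finite.

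For the transfer I would mimic the proof of Theorem \ref{The : ConstSet}. The inequality defining $y_k^* \in \hat N_Q(y_k)$ makes $x_k$ a local minimizer, over $\Gr\varphi \cap (C \times \R^l)$, of an auxiliary objective of the form $-\skalp{y_k^*,y-y_k} + \varepsilon_k \norm{y-y_k}$ with $\varepsilon_k \downarrow 0$. Applying the fuzzy optimality conditions (\cite[Lemma 2.32]{Mo06a}) with tolerance $\eta_k := t_k^2$ yields nearby points in $\Gr\varphi$ and in $C \times \R^l$ carrying regular normals $(a_k,b_k) \in \hat N_{\Gr\varphi}(\cdot)$ and $c_k \in \hat N_C(\cdot)$ whose components balance the objective gradient up to $O(\eta_k)$; the Lipschitz continuity of $\varphi$ near $\xb$ in direction $h$ (available since $h \in D\Psi(\yb,\xb)(v)$) bounds $(a_k,c_k)$ through $\norm{a_k} \le L\norm{b_k}$ and lets me extract convergent subsequences. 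Passing to the limit produces a limiting normal to $\Gr\varphi$ in direction $(h,v)$ whose two components are $y^*$ (up to sign) and a limiting normal to $C$ in direction $h$; this is exactly the inclusion $D^*\varphi(\xb;(h,v))(y^*) \cap N_C(\xb;h) \neq \emptyset$, and \eqref{eq : ImportantDirections} discards the inadmissible directions to close the argument.

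The step I expect to be delicate is not the passage to the limit but ensuring that every auxiliary point furnished by the fuzzy conditions approaches $(\xb,\yb)$ from the prescribed direction $(h,v)$, not merely converges to it; this is what upgrades the ordinary normal cone and coderivative to their directional versions. The choice $\eta_k = t_k^2$ is made precisely for this: since $t_k^2/t_k \to 0$, the $O(\eta_k)$ displacements are negligible against $t_k$, so dividing by $t_k$ reproduces $h$ and $v$ in the limit. A second point demanding care is the sign bookkeeping inherent in the convention $(\xi,-\eta) \in N_{\Gr\varphi}$: the minus signs in the balance equations must be tracked consistently so that the limit lands in $D^*\varphi(\xb;(h,v))(y^*)$ with the companion vector correctly placed in $N_C(\xb;h)$.
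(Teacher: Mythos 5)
Your skeleton matches the paper's: inner calmness both fixes a single $\xb\in\Psi(\yb)$ and forces $\Gamma(x_k-\xb,t_k)\neq\emptyset$, which is exactly how the paper removes the union over $\Psi(\yb)$ and the spherical directions, and your identification of $h\in T_C(\xb)$ with $v\in D\varphi(\xb)(h)$ is correct. The genuine gap is in your transfer step. You apply the fuzzy optimality conditions to a minimization whose constraint set is the \emph{intersection} $\Gr \varphi \cap (C\times\R^l)$ and assert that they deliver separate regular normals $(a_k,b_k)\in \hat N_{\Gr \varphi}(\cdot)$ and $c_k \in \hat N_C(\cdot)$ at nearby points. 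No fuzzy rule does this. The semi-Lipschitzian fuzzy sum rule behind \cite[Lemma 2.32]{Mo06a} requires all but one summand to be locally Lipschitz, whereas your problem is $g+\delta_{\Gr\varphi}+\delta_{C\times\R^l}$ with two indicator functions of closed sets; splitting a regular normal to an intersection into a sum of normals to the two sets is precisely the intersection rule, which fails without a qualification condition (think of two tangent balls). This issue never arises in the proof of Theorem \ref{The : ConstSet}, because there metric subregularity is used \emph{first} to penalize the constraint, and the fuzzy conditions are then applied over the \emph{product} $\Gr \varphi \times Q$, whose regular normal cone splits exactly. In your proposal the directional Lipschitz continuity of $\varphi$ is spent only on the bound $\norm{a_k}\le L\norm{b_k}$ for compactness, so the decoupling gap is not repaired anywhere.

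There are two ways to close it. The paper's way: observe directly from $\norm{y-\varphi(x_k)} \leq \norm{(y,x)-(\varphi(x_k),x_k)}$ that $(y_k^*,0)\in \hat N_{\Gr \Psi}(\varphi(x_k),x_k)$, pass to the limit to get $(y^*,0)\in N_{\Gr \Psi}((\yb,\xb);(v,h))$, and only then apply the already established directional intersection rule, Corollary \ref{Cor : Cap}, to $\Gr \Psi = \Gr \varphi^{-1}\cap(\R^l\times C)$; its qualification condition \eqref{eq : capCQ} holds because directional Lipschitz continuity yields $D^*\varphi(\xb;(h,v))(0)=\{0\}$. This is where the Lipschitz hypothesis genuinely enters, and no fuzzy calculus is needed in this proof at all. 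Alternatively, if you want to re-run the variational argument of Theorem \ref{The : ConstSet} inline, you must first decouple the intersection: use the Lipschitz continuity of $\varphi$ near the relevant points to prove the error bound $\dist{\Gr \varphi \cap (C\times\R^l)}{(x,y)} \leq (1+L)\left(\dist{\Gr\varphi}{(x,y)} + \dist{C}{x}\right)$, i.e.\ metric subregularity of the associated feasibility mapping, penalize accordingly, and apply the fuzzy conditions to a problem posed over the product $\Gr\varphi\times C$ with duplicated variables --- in effect re-proving Corollary \ref{Cor : Cap} in this special case. With either repair, your limiting analysis (including the choice $\eta_k=t_k^2$, which is correct and is the same device the paper uses) goes through.
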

\begin{proof}
  Let $y^* \in N_Q(\yb;v)$ and consider $(t_k) \downarrow 0$, $v_k \to v$, $y_k^* \to y^*$ with
  $x_k \in \Psi(\yb + t_k v_k)$ and $y_k^* \in \hat N_Q(\varphi(x_k))$.
  Under the inner semicompactness of $\Psi$ we can take $x_k \in \Psi(\yb + t_k v_k)$ converging to some $\xb \in \Psi(\yb)$,
  taking also into account continuity of $\varphi$ and closedness of $C$.
  On the other hand, if we assume the existence of $\xb \in \Psi(\yb)$ such that $\Psi$ is inner semicontinuous
  at $(\yb,\xb)$ w.r.t. $Q$ in direction $v$, we can directly take $x_k \in \Psi(\yb + t_k v_k)$ with $x_k \to \xb$.

  Now $y_k^* \in \hat N_Q(\varphi(x_k))$ yields $\skalp{y_k^*,y-\varphi(x_k)} \leq o(\norm{y-\varphi(x_k)})$ for all $y \in Q$.
  Thus, taking into account $\skalp{0,x-x_k} = 0$ for all $x$ and $\norm{y-\varphi(x_k)} \leq \norm{(y,x)-(\varphi(x_k),x_k)}$,
  we obtain
  \[\skalp{(y_k^*,0),(y,x)-(\varphi(x_k),x_k)} \leq o(\norm{(y,x)-(\varphi(x_k),x_k)})\]
  for all $y,x \in \Gr \Psi$, showing $(y_k^*,0) \in \hat N_{\Gr \Psi}(\varphi(x_k),x_k)$.

  Denoting $a_k := x_k-\xb$, we define direction $h$ to be an element of either
  $\Gamma(a_k,t_k)$ or $\Gamma^{\infty}(a_k,t_k)$, see \eqref{eqn : OmegaDef}-\eqref{eqn : OmegaInfDef},
  and hence, taking also into account observation \eqref{eq : ImportantDirections}, we obtain
  \begin{equation}\label{eq : AuxEstim}
    (y^*,0) \in \Big( \bigcup_{h \in D\Psi(\yb,\xb)(v)} N_{\Gr \Psi}((\yb,\xb);(v,h)) \Big) \cup
    \Big( \bigcup_{h \in D\Psi(\yb,\xb)(0) \cap \mathbb{S}} N_{\Gr \Psi}((\yb,\xb);(0,h)) \Big).
  \end{equation}
  Note also that if $\Psi$ is inner calm at $(\yb,\xb)$ w.r.t. $Q$ in direction $v$,
  it is also inner semicontinuous at $(\yb,\xb)$ w.r.t. $Q$ in direction $v$
  and there exists $x_k \in \Psi(\yb + t_k v_k)$ such that $(x_k - \xb)/t_k$ is bounded.
  Hence we always have $h \in \Gamma(a_k,t_k)$ and the second term in estimate \eqref{eq : AuxEstim} is superfluous.

  Finally, $\Gr \Psi = \Gr \varphi^{-1} \cap (\R^l \times C)$ and we can use Corollary \ref{Cor : Cap}.
  We consider only the case $(y^*,0) \in \bigcup_{h \in D\Psi(\yb,\xb)(v)} N_{\Gr \Psi}((\yb,\xb);(v,h))$,
  because the other case is analogous. Note that condition \eqref{eq : capCQ} is fulfilled, because
  if $-(\lambda_1,\lambda_2) \in N_{\R^l \times C}((\yb,\xb);(v,h))$ we have $\lambda_1 = 0$ and
  $(0,\lambda_2) \in N_{\Gr \varphi^{-1}}((\yb,\xb);(v,h))$ means $\lambda_2 \in D^*\varphi(\xb;(h,v))(0)$,
  which implies $\lambda_2 = 0$ due to assumed Lipschitz continuity of $\varphi$ near $\xb$ in direction $v$.
  Hence, taking into account Proposition \ref{Pro : ProductSet}, we obtain the existence of $h \in D\Psi(\yb,\xb)(v)$,
  $(w^*,z^*) \in N_{\Gr \varphi}((\xb,\yb);(h,v))$ and $x^* \in N_C(\xb;h)$ such that
  $(y^*,0) = (z^*,w^*) + (0,x^*)$, i.e., $x^* \in D^*\varphi(\xb;(h,v))(y^*) \cap N_C(\xb;h)$.
  This, together with \eqref{eq : ImportantDirections}, completes the proof.
\end{proof}

\begin{remark} \label{Rem : ConstSetForw}
  Note that the directional Lipschitz continuity of $\varphi$ is only needed to justify the usage of Corollary \ref{Cor : Cap}.
  Thus, it can be weakened by assuming that for every $\xb \in \Psi(\yb)$ and all directions
  $h \in D\Psi(\yb,\xb)(v)$ and $h \in D\Psi(\yb,\xb)(0) \cap \mathbb{S}$, the mapping
  \[(y,x) \rightrightarrows (\Gr \varphi^{-1} - (y,x)) \times (\R^l \times C - (y,x))\]
  is metrically subregular at $((\yb,\xb),(0,0,0,0))$ in directions $(v,h)$ and $(0,h)$.

  Moreover, the inner semicompactness of $\Psi$ at $\yb$ w.r.t. $Q$ in direction $v$ is clearly implied
  by the assumption that $\Psi((\yb + \mathcal{V}) \cap Q) = \varphi^{-1}((\yb + \mathcal{V}) \cap Q) \cap C$ is bounded
  for some directional neighborhood $\mathcal{V}$ of $v$. An analogous assumption was used in the standard
  version of this result in \cite[Theorem 6.43]{RoWe98}.
\end{remark}

\begin{proposition} \label{Prop : Cup}
  Let $C_i \subset \R^n, i=1,\ldots,l$, be closed and set $C=\cup_{i=1}^l C_i$.
  For $\xb \in C$ and $h \in \R^n$ denote $I(\xb) := \{i =1,\ldots,l \mv \xb \in C_i\}$
  and $I(\xb,h) := \{i \in I(\xb) \mv h \in T_{C_i}(\xb)\}$. Then
  \begin{equation} \label{eq : cup}
    N_C(\xb;h) \subset \bigcup_{i \in I(\xb,h)} N_{C_i}(\xb;h).
  \end{equation}
\end{proposition}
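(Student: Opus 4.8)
The plan is to prove the union formula for directional normal cones to a finite union of closed sets by working directly from the definition of the directional limiting normal cone via an approximation/limiting argument. The key observation is that for a union $C=\cup_{i=1}^l C_i$, near any point $x$ the regular normal cone $\hat N_C(x)$ is controlled by the regular normal cones of those pieces $C_i$ that actually contain $x$. Since we want $N_C(\bar x;h)$, which is generated by sequences $x_k = \bar x + t_k h_k \to \bar x$ from direction $h$ with $\xi_k \in \hat N_C(x_k)$, the main task is to track which indices $i$ remain "active" along such a sequence and to show that the limiting direction $h$ lands in the appropriate tangent cones $T_{C_i}(\bar x)$.

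**First I would** fix $\xi \in N_C(\bar x;h)$ and take sequences $(t_k)\downarrow 0$, $h_k \to h$, $\xi_k \to \xi$ with $x_k := \bar x + t_k h_k \in C$ and $\xi_k \in \hat N_C(x_k)$. Since there are only finitely many indices, by passing to a subsequence I may assume there is a single index $i$ with $x_k \in C_i$ for all $k$; this uses the pigeonhole principle on the finite index set $\{1,\dots,l\}$. **The key step** is then to observe that for any point $x$, one has the inclusion $\hat N_C(x) \subset \hat N_{C_i}(x)$ whenever $x \in C_i$, because $C_i \subset C$ forces $T_{C_i}(x) \subset T_C(x)$ and hence, polarizing, $\hat N_C(x)=T_C(x)^\circ \subset T_{C_i}(x)^\circ = \hat N_{C_i}(x)$. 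Consequently $\xi_k \in \hat N_{C_i}(x_k)$ along the subsequence. Since $x_k = \bar x + t_k h_k \in C_i$, $h_k \to h$, $t_k \downarrow 0$ and $\xi_k \to \xi$ with $\xi_k \in \hat N_{C_i}(x_k)$, the definition of the directional limiting normal cone yields directly that $\xi \in N_{C_i}(\bar x;h)$.

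**Then I would** verify that the index $i$ so obtained actually lies in the restricted set $I(\bar x,h)$, not merely in $I(\bar x)$. Membership $i \in I(\bar x)$ is immediate since $x_k \to \bar x$ with $x_k \in C_i$ and $C_i$ closed gives $\bar x \in C_i$. For $i \in I(\bar x,h)$ I must show $h \in T_{C_i}(\bar x)$; but this follows straight from the contingent cone definition, because $\bar x + t_k h_k \in C_i$ with $h_k \to h$ and $t_k \downarrow 0$ exhibits $h$ as a contingent direction of $C_i$ at $\bar x$. Hence $\xi \in N_{C_i}(\bar x;h)$ with $i \in I(\bar x,h)$, which is precisely the right-hand side of \eqref{eq : cup}, completing the argument.

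**The main obstacle** is less a deep difficulty than a bookkeeping subtlety: I must be careful that the subsequence extraction pinning down a single active index $i$ is compatible with simultaneously preserving all the convergences ($t_k\downarrow 0$, $h_k\to h$, $\xi_k\to\xi$), which is harmless since these convergences survive passage to any subsequence. A slightly more delicate point would arise if one tried to prove the reverse inclusion or an equality — but the statement asserts only $\subset$, so no such effort is needed here; in fact the reverse inclusion can fail, which is why the result is stated as an inclusion. The only genuinely essential ingredient is the monotonicity $\hat N_C(x) \subset \hat N_{C_i}(x)$ for $x \in C_i$, and I expect that to be the conceptual heart of the proof.
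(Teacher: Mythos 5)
Your proposal is correct and follows essentially the same route as the paper's proof: pigeonhole on the finitely many indices to fix one $C_i$ containing a subsequence of the $x_k$, use the monotonicity $\hat N_C(x_k) \subset \hat N_{C_i}(x_k)$ coming from $C_i \subset C$ (which the paper invokes directly and you justify via polarity of contingent cones, both valid in view of $\hat N_\Omega(x)=T_\Omega(x)^\circ$ stated in the preliminaries), and read off $h \in T_{C_i}(\xb)$ from the defining sequence.
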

\begin{proof}
  Let $x^* \in N_C(\xb;h)$ and consider $(t_k) \downarrow 0$, $h_k \to h$, $x_k^* \to x^*$ with
  $\xb + t_k h_k \in C$ and $x_k^* \in \hat N_C(\xb + t_k h_k)$. Then
  there exists $i$ such that $\xb + t_k h_k \in C_i$ for infinitely many $k$, showing $i \in I(\xb,h)$.
  Since $C_i \subset C$, by passing to subsequence if necessary,
  we obtain $x_k^* \in \hat N_{C_i}(\xb + t_k h_k)$, showing $x^* \in N_{C_i}(\xb;h)$.
\end{proof}

\section{Calculus for directional limiting subdifferentials}

In this section we carry over the results for normal cones from the previous section to
directional limiting subdifferentials defined via normals to the epigraph by \eqref{eq : SubdifDef}.
However, we start by a brief discussion about the relations between the analytic directional limiting subdifferential
and the one given by \eqref{eq : SubdifDef}.

Consider the following simple example.
\begin{example}
  Let $\xb=0$, $h=1$ and $f: \R \to \R$ be given by
  \[f(x) = \left\{
	    \begin{array}{rl}
	      0 & \textrm{ if } x \leq 0, \\
	      1/\sqrt{k} & \textrm{ if } x \in (1/(2k),1/k] \textrm{ for } k \in \N, \\
	      1 & \textrm{ if } x > 1.
	    \end{array} \right.\]
  Then clearly $Df(\xb)(h) = \emptyset$ and thus $\partial f(\xb;(h,\nu)) = \emptyset$ for all $\nu$. On the other hand,
  $0 \in \hat \partial f(1/k)$ for all $k \in \N$ and thus $0 \in \partial_a f(\xb;h)$. \hfill$\triangle$
\end{example}

In order to better understand the difference between the two concepts of directional subdifferentials,
given an lsc function $f: \R^n \to \bar\R$, $\xb \in \dom f$ and a direction $h \in \R^n$,
we consider the following sets
\begin{eqnarray*}
  N_{\epi f}((\xb,f(\xb));(h,\pm \infty))
  &:=& \{(x^*,\beta) \mv \exists (t_k) \downarrow 0, (h_k,\nu_k) \to (h,\pm \infty), (x_k^*,\beta_k) \to (x^*,\beta): \\
  && \ t_k \nu_k \to 0, (x_k^*,\beta_k) \in \hat N_{\epi f}((\xb,f(\xb)) + t_k(h_k,\nu_k))\}.
\end{eqnarray*}

\begin{proposition} \label{Pro : SubdifVSNormCone}
  Let $f: \R^n \to \bar\R$ be finite at $\xb$ and lsc and consider $h \in \R^n$. One has
  \[\partial_a f(\xb;h) = \Big\{x^* \mv (x^*,-1) \in \bigcup\limits_{\nu \in Df(\xb)(h)} N_{\epi f}((\xb,f(\xb));(h,\nu))
  \cup N_{\epi f}((\xb,f(\xb));(h,\pm \infty))\Big\}.\]
\end{proposition}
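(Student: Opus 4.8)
The plan is to transfer the whole statement to the epigraph by means of the elementary equivalence $\xi \in \hat\partial f(x) \Leftrightarrow (\xi,-1)\in\hat N_{\epi f}(x,f(x))$ together with its conic rescaling: since $\hat N_{\epi f}(x,f(x))$ is a cone, for every $\lambda>0$ one has $(\xi,-\lambda)\in\hat N_{\epi f}(x,f(x))$ if and only if $\xi/\lambda\in\hat\partial f(x)$. The only nonelementary ingredient I would record first is the following horizontality observation: if $(x,\alpha)\in\epi f$ with $\alpha>f(x)$, then $\hat N_{\epi f}(x,\alpha)\subset\R^n\times\{0\}$. This is immediate from \eqref{DefEpsNormals}, because both vertical directions $(0,\pm 1)$ are tangent to $\epi f$ at such an interior-of-fibre point (the points $(x,\alpha\pm s)$ lie in $\epi f$ for all small $s>0$), which forces the last coordinate of any Fr\'echet normal to vanish; here closedness of $\epi f$ from lower semicontinuity guarantees the directional objects are well defined.

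For the inclusion ``$\subset$'' I would start from $x^*\in\partial_a f(\xb;h)$ with witnessing sequences $(t_k)\downarrow 0$, $h_k\to h$, $x_k^*\to x^*$ satisfying $f(\xb+t_kh_k)\to f(\xb)$ and $x_k^*\in\hat\partial f(\xb+t_kh_k)$. Setting $\nu_k:=(f(\xb+t_kh_k)-f(\xb))/t_k$, the point $(\xb,f(\xb))+t_k(h_k,\nu_k)=(\xb+t_kh_k,f(\xb+t_kh_k))$ lies on $\Gr f\subset\epi f$, one has $t_k\nu_k\to 0$, and $(x_k^*,-1)\in\hat N_{\epi f}$ at that point. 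Passing to a subsequence along which $\nu_k\to\nu\in[-\infty,+\infty]$, the finite case yields directly $(x^*,-1)\in N_{\epi f}((\xb,f(\xb));(h,\nu))$, and since the defining points lie on $\Gr f$ and approach $(\xb,f(\xb))$ from direction $(h,\nu)$, we get $(h,\nu)\in T_{\Gr f}(\xb,f(\xb))$, i.e.\ $\nu\in Df(\xb)(h)$. The case $\nu=\pm\infty$ reproduces verbatim the defining sequences of $N_{\epi f}((\xb,f(\xb));(h,\pm\infty))$, using $t_k\nu_k\to 0$. In either case $x^*$ lies in the right-hand side.

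For the reverse inclusion ``$\supset$'' I would take $(x^*,-1)$ in one of the listed directional normal cones and extract witnessing sequences $(t_k)\downarrow 0$, $(h_k,\nu_k)\to(h,\nu)$ (with $\nu$ finite or $\pm\infty$, and $t_k\nu_k\to 0$ in the infinite case) and $(x_k^*,\beta_k)\to(x^*,-1)$ with $(x_k^*,\beta_k)\in\hat N_{\epi f}((\xb,f(\xb))+t_k(h_k,\nu_k))$. The crux, and the step I expect to be the main obstacle, is to rule out that the defining points $(\xb+t_kh_k,\,f(\xb)+t_k\nu_k)$ sit strictly above the graph: since $\beta_k\to -1\neq 0$, the horizontality observation forbids $f(\xb)+t_k\nu_k>f(\xb+t_kh_k)$ for large $k$, and together with membership in $\epi f$ this forces the equality $f(\xb+t_kh_k)=f(\xb)+t_k\nu_k$, so the points lie on $\Gr f$. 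Rescaling by $-\beta_k>0$ then gives $\tilde x_k^*:=x_k^*/(-\beta_k)\in\hat\partial f(\xb+t_kh_k)$ with $\tilde x_k^*\to x^*$, $h_k\to h$, and $f(\xb+t_kh_k)=f(\xb)+t_k\nu_k\to f(\xb)$ (because $t_k\nu_k\to 0$ in both the finite and the $\pm\infty$ cases). This exhibits $x^*\in\partial_a f(\xb;h)$ and completes the argument.
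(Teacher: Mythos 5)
Your proof is correct and takes essentially the same route as the paper's: the forward inclusion via the difference quotients $\nu_k=(f(\xb+t_kh_k)-f(\xb))/t_k$ with a finite/infinite case split (using $t_k\nu_k\to 0$ for the $\pm\infty$ cones and graph membership to get $\nu\in Df(\xb)(h)$), and the reverse inclusion by noting that $\beta_k\to-1\neq 0$ rules out base points strictly above the graph, which forces $f(\xb)+t_k\nu_k=f(\xb+t_kh_k)$ and allows the conic rescaling $x_k^*/(-\beta_k)\in\hat\partial f(\xb+t_kh_k)$. The only difference is presentational: you state explicitly the horizontality observation and the rescaling step that the paper uses implicitly in the line ``$f(\xb)+t_k\nu_k>f(\xb+t_kh_k)$ implies $\beta_k=0$''.
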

\begin{proof}
  Let $x^* \in \partial_a f(\xb;h)$ and consider $(t_k) \downarrow 0$, $h_k \to h$, $x_k^* \to x^*$ with
  $f(\xb + t_k h_k) \to f(\xb)$ and $x_k^* \in \hat\partial f(\xb + t_k h_k)$, or, equivalently
  \[(x_k^*,-1) \in \hat N_{\epi f} (\xb + t_k h_k,f(\xb + t_k h_k)) =
  \hat N_{\epi f} ((\xb,f(\xb)) + t_k (h_k,\nu_k))\]
  for $\nu_k := (f(\xb + t_k h_k)-f(\xb))/t_k)$. Now if $\nu_k \to \pm \infty$,
  we immediately conclude $(x^*,-1) \in N_{\epi f}((\xb,f(\xb));(h,\pm \infty))$.
  On the other hand, if $\nu_k \nrightarrow \pm \infty$, there exists $\nu$ such that,
  after passing to a subsequence if necessary, we have $\nu_k \to \nu$. Thus,
  we conclude $(x^*,-1) \in N_{\epi f}((\xb,f(\xb));(h,\nu))$ and,
  taking into account $(\xb,f(\xb)) + t_k (h_k,\nu_k) \in \Gr f$, we obtain $\nu \in Df(\xb)(h)$.

  Now take $x^*$ fulfilling $(x^*,-1) \in N_{\epi f}((\xb,f(\xb));(h,\nu))$ for some $\nu \in Df(\xb)(h)$ or $\nu=\pm \infty$
  and consider $(t_k) \downarrow 0$, $(h_k,\nu_k) \to (h,\nu)$,
  $(x_k^*,\beta_k) \to (x^*,-1)$ with $(\xb,f(\xb)) + t_k(h_k,\nu_k) \in \epi f$ and
  $(x_k^*,\beta_k) \in \hat N_{\epi f}((\xb,f(\xb)) + t_k (h_k,\nu_k))$.
  Since $f(\xb) + t_k \nu_k > f(\xb + t_k h_k)$ implies $\beta_k = 0$, we assume
  that $f(\xb) + t_k \nu_k = f(\xb + t_k h_k)$ and thus obtain $-x_k^* / \beta_k \in \hat\partial f(\xb + t_k h_k)$
  and consequently $x^* \in \partial_a f(\xb;h)$.
  Moreover, $f(\xb + t_k h_k) \to f(\xb)$ follows directly from definition for $\nu=\pm \infty$
  and for $\nu \in Df(\xb)(h)$ it holds due to lsc of $f$ and $(\xb,f(\xb)) + t_k(h_k,\nu_k) \in \epi f$.
  This completes the proof.
\end{proof}

Note that one always has
\begin{equation}\label{eq : NoCalmnessInfEstim}
  N_{\epi f}((\xb,f(\xb));(h,\pm \infty)) \subset N_{\epi f}((\xb,f(\xb));(0,\pm 1)).
\end{equation}
and this is due to the fact that for $f(x_k) - f(\xb)$ and $t_k$ from definition of $N_{\epi f}((\xb,f(\xb));(h,\pm \infty))$
we have $\Gamma^{\infty}(f(x_k) - f(\xb),t_k) = \pm 1$.
Moreover, if $f$ is calm at $\xb$ in direction $h$, the sequence $(f(x_k) - f(\xb))/t_k$ is bounded
and thus $N_{\epi f}((\xb,f(\xb));(h,\pm \infty)) = \emptyset$.

Note also that the calmness of an extended real-valued function is always understood with respect to its domain
and hence does not exclude e.g. the indicator function of a set. Moreover, for our purposes,
we in fact only need the existence of $\varepsilon, \kappa > 0$ and a directional neighborhood $\mathcal{U}$ of $h$ such that
\begin{equation} \label{eq : AttentCalmness}
  \vert f(x) - f(\xb) \vert \leq \kappa \norm{x-\xb} \ \forall x \in \xb + \mathcal{U} \textrm{ with }
  \vert f(x) - f(\xb) \vert \leq \varepsilon,
\end{equation}
suggesting that discontinuities of $f$ also do not cause any harm. However, in order to keep the presentation as simple as possible,
in the sequel we will only consider the calmness.

\begin{corollary}\label{Cor : SubdifUnderCalmness}
  Let $f: \R^n \to \bar\R$ be finite at $\xb$ and consider $h \in \R^n$.
  Assume further that $f$ is calm at $\xb$ in direction $h$. Then one has
  \[\partial_a f(\xb;h) = \bigcup\limits_{\nu \in Df(\xb)(h)} \partial f(\xb;(h,\nu))
=\Big\{x^* \mv (x^*,-1) \in \bigcup\limits_{\nu \in Df(\xb)(h)} N_{\epi f}((\xb,f(\xb));(h,\nu))\Big\}.\]
  If $f$ is also directionally differentiable at $\xb$ in $h$,
  then $Df(\xb)(h)=f^{\prime}(\xb;h)$ and
  $\partial_a f(\xb;h) = \partial f(\xb;(h,f^{\prime}(\xb;h)))$.
\end{corollary}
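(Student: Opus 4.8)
The plan is to derive Corollary \ref{Cor : SubdifUnderCalmness} as a direct specialization of Proposition \ref{Pro : SubdifVSNormCone} under the additional calmness hypothesis, using the definition \eqref{eq : SubdifDef} of the directional subdifferential to rewrite the normal-cone description as a union of subdifferentials. First I would invoke Proposition \ref{Pro : SubdifVSNormCone}, which expresses $\partial_a f(\xb;h)$ as the set of $x^*$ with $(x^*,-1)$ lying in the union of the finite-slope cones $\bigcup_{\nu \in Df(\xb)(h)} N_{\epi f}((\xb,f(\xb));(h,\nu))$ together with the infinite-slope cone $N_{\epi f}((\xb,f(\xb));(h,\pm\infty))$. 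The whole point of adding calmness is to kill this last term: as already observed in the text just below Proposition \ref{Pro : SubdifVSNormCone}, if $f$ is calm at $\xb$ in direction $h$ then the difference quotients $(f(x_k)-f(\xb))/t_k$ are bounded along any sequence converging from direction $h$, so no sequence can produce $\nu_k \to \pm\infty$ with $t_k\nu_k \to 0$, whence $N_{\epi f}((\xb,f(\xb));(h,\pm\infty)) = \emptyset$.

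With the infinite-slope term removed, Proposition \ref{Pro : SubdifVSNormCone} immediately gives
\[\partial_a f(\xb;h) = \Big\{x^* \mv (x^*,-1) \in \bigcup\limits_{\nu \in Df(\xb)(h)} N_{\epi f}((\xb,f(\xb));(h,\nu))\Big\},\]
which is the second equality claimed in the corollary. To obtain the first equality I would simply unfold definition \eqref{eq : SubdifDef}: for each fixed $\nu$, the condition $(x^*,-1) \in N_{\epi f}((\xb,f(\xb));(h,\nu))$ is by definition equivalent to $x^* \in \partial f(\xb;(h,\nu))$. Taking the union over $\nu \in Df(\xb)(h)$ and noting that the membership $(x^*,-1) \in \bigcup_\nu N_{\epi f}(\cdots;(h,\nu))$ means precisely that $(x^*,-1)$ lies in one of these cones, the two descriptions coincide, giving $\partial_a f(\xb;h) = \bigcup_{\nu \in Df(\xb)(h)} \partial f(\xb;(h,\nu))$.

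For the final assertion I would add directional differentiability of $f$ at $\xb$ in direction $h$. This forces the graphical derivative $Df(\xb)(h)$ to be the singleton $\{f^{\prime}(\xb;h)\}$: directional differentiability means $(f(\xb+th)-f(\xb))/t \to f^{\prime}(\xb;h)$, and combined with calmness (controlling behaviour for $h_k \to h$) every sequence realizing a tangent direction to $\Gr f$ over $h$ yields the same slope $f^{\prime}(\xb;h)$, so $v \in Df(\xb)(h)$ forces $v = f^{\prime}(\xb;h)$. Substituting this singleton into the union collapses it to the single term $\partial f(\xb;(h,f^{\prime}(\xb;h)))$, as claimed.

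I expect the only genuine subtlety to be the identification $Df(\xb)(h) = \{f^{\prime}(\xb;h)\}$: one must argue that directional differentiability together with calmness rules out additional tangent slopes arising from sequences $h_k \to h$ rather than the fixed ray $h$, rather than from mere existence of the one-sided limit along $h$ itself. The rest is bookkeeping — the vanishing of the infinite-slope cone under calmness and the translation between normal-cone membership and the subdifferential via \eqref{eq : SubdifDef} are both immediate once Proposition \ref{Pro : SubdifVSNormCone} is in hand.
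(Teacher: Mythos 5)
Your treatment of the two displayed equalities is exactly the paper's own route: the paper states this corollary without a separate proof, as an immediate consequence of Proposition \ref{Pro : SubdifVSNormCone}, of the observation made right before the corollary that calmness of $f$ at $\xb$ in direction $h$ bounds the quotients $(f(x_k)-f(\xb))/t_k$ and hence forces $N_{\epi f}((\xb,f(\xb));(h,\pm\infty))=\emptyset$, and of definition \eqref{eq : SubdifDef}, which converts membership of $(x^*,-1)$ in $N_{\epi f}((\xb,f(\xb));(h,\nu))$ into $x^*\in\partial f(\xb;(h,\nu))$. Up to that point your proposal is complete and correct.

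The final assertion is where your argument has a genuine gap, and you partly sense it yourself. Calmness does not ``control behaviour for $h_k\to h$'' in the sense you need: it only makes the difference quotients $(f(\xb+t_kh_k)-f(\xb))/t_k$ bounded; it does not make them converge to $f^{\prime}(\xb;h)$. Take $n=2$, $\xb=(0,0)$, $h=(1,0)$ and
\[
f(x_1,x_2)=\begin{cases} x_1 & \text{if } x_1>0 \text{ and } x_2\neq 0,\\ 0 & \text{otherwise.}\end{cases}
\]
Then $f$ is lsc, globally calm at $\xb$ (indeed $\vert f(x)\vert\le\norm{x}$), and the one-sided limit along the ray exists: $f^{\prime}(\xb;h)=\lim_{t\downarrow 0}f(t,0)/t=0$. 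Nevertheless, with $h_k=(1,1/k)$ one gets $f(\xb+t_kh_k)/t_k=1$, so $Df(\xb)(h)\supset\{0,1\}$ is not a singleton; worse, $(1,0)\in\partial_a f(\xb;h)$ (take the gradients $(1,0)\in\hat\partial f(t_k,t_k/k)$ of the smooth piece $x\mapsto x_1$), while every element of $N_{\epi f}((\xb,f(\xb));(h,0))$ has first component $0$, so $(1,0)\notin\partial f(\xb;(h,f^{\prime}(\xb;h)))$ and the claimed identity itself fails under this reading. The step, and the corollary's last sentence, are valid precisely when ``directionally differentiable at $\xb$ in $h$'' is understood in the Hadamard sense, i.e. $(f(\xb+t_kh_k)-f(\xb))/t_k\to f^{\prime}(\xb;h)$ for all $t_k\downarrow 0$ and all $h_k\to h$ with $\xb+t_kh_k\in\dom f$; then $Df(\xb)(h)=\{f^{\prime}(\xb;h)\}$ is immediate from the definition of the graphical derivative, with no appeal to calmness at all. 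To be fair, the paper offers no proof of this step either, so your proposal is faithful to the paper's implicit argument; but what you wrote is an assertion rather than a proof, and under the plain one-sided-ray definition of the directional derivative no proof exists.
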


This corollary shows that the results for directional limiting subdifferentials obtained later in this section can be easily carried over
to analytic directional limiting subdifferentials whenever the considered function $f$ is directionally calm.
In case $f$ fails to be calm, one can get the results for analytic directional subdifferentials using the estimate
\[\partial_a f(\xb;h) \subset \bigcup\limits_{\nu \in Df(\xb)(h)} \partial f(\xb;(h,\nu))
\cup \partial f(\xb;(0,-1)) \cup \partial f(\xb;(0,1)),\]
which follows from \eqref{eq : NoCalmnessInfEstim}.

Another possible approach to calculus for directional limiting normal cones and subdifferentials would
be to start with subdifferentials, build first the calculus for subdifferentials from the scratch and then carry it over
to normal cones. The role of the bridge between the two concepts could be played by
equivalent characterization of directional normal cones via directional subdifferentials of the indicator function
or the distance function. For the sake of completeness, we present these results now.

Given a closed set $C$, we consider a point $\xb \in C$ and a direction $h \in T_C(\xb)$.
Clearly, $\delta_C(\cdot)$ and $\dist{C}{\cdot}$ are calm and directionally differentiable at $\xb$ in $h$
with $\delta_C^{\prime}(\xb;h) = {\rm d}_C^{\prime}(\xb;h) = 0$.
Thus, taking into account Corollary \ref{Cor : SubdifUnderCalmness},
we can restrict our attention to the analytic subdifferentials.

While the relation $N_C(\xb,h) = \partial_a \delta_C(\xb;h)$ follows directly from definitions,
in order to deal with the distance function we need to consider the following lemma.

\begin{lemma}
  For $C$, $\xb$ and $h$ as above it holds that
  \begin{equation} \label{eq : EquivDefSubdDist}
    \partial_a \dist{C}{\xb;h} = \{x^* \mv \exists (t_k) \downarrow 0, h_k \to h, x_k^* \to x^*: \xb + t_k h_k \in C,
    x_k^* \in \hat\partial \dist{C}{\xb + t_k h_k} \}.
  \end{equation}
\end{lemma}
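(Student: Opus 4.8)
The plan is to prove the two inclusions separately, the nontrivial content being ``$\subseteq$''. The inclusion ``$\supseteq$'' is immediate: since $\xb\in C$ we have $\dist{C}{\xb}=0$, and because $\dist{C}{\cdot}$ is $1$-Lipschitz, any sequence $\xb+t_kh_k\in C$ with $h_k\to h$, $t_k\downarrow 0$ automatically satisfies $\dist{C}{\xb+t_kh_k}=0\to\dist{C}{\xb}$. Thus the set on the right of \eqref{eq : EquivDefSubdDist} merely adds the constraint $\xb+t_kh_k\in C$ to the defining sequences of $\partial_a\dist{C}{\xb;h}$ and is therefore contained in it.

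For ``$\subseteq$'', I would take $x^*\in\partial_a\dist{C}{\xb;h}$ and fix defining sequences $(t_k)\downarrow 0$, $h_k\to h$, $x_k^*\to x^*$ with $x_k^*\in\hat\partial\dist{C}{z_k}$, where $z_k:=\xb+t_kh_k\to\xb$. If $z_k\in C$ for infinitely many $k$, then along that subsequence the sequences already have the form required on the right-hand side and we are done. Hence I may assume $z_k\notin C$ for all $k$. The key tool is the standard description of the Fr\'echet subdifferential of a distance function at an exterior point: if $z\notin C$ and $\hat\partial\dist{C}{z}\neq\emptyset$, then the metric projection $\proj{C}{z}$ is a single point $p$, one has $\hat\partial\dist{C}{z}=\{(z-p)/\norm{z-p}\}$, and moreover $(z-p)/\norm{z-p}\in\hat N_C(p)\cap\B=\hat\partial\dist{C}{p}$ (the latter inclusion following from $\langle z-p,c-p\rangle\le\tfrac12\norm{c-p}^2$ for all $c\in C$).

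Applying this to each $z_k$, I obtain projections $p_k:=\proj{C}{z_k}\in C$ with $\norm{z_k-p_k}=\dist{C}{z_k}$ and $x_k^*=(z_k-p_k)/\dist{C}{z_k}$ (so $\norm{x_k^*}=1$ and hence $\norm{x^*}=1$), enjoying the crucial property $x_k^*\in\hat\partial\dist{C}{p_k}$. Thus the \emph{same} sequence of subgradients $x_k^*\to x^*$ is attained at the interior points $p_k\in C$, and since $\dist{C}{z_k}\le\norm{z_k-\xb}\to 0$ we also get $p_k\to\xb$. It remains only to check that $p_k$ approaches $\xb$ \emph{from the direction} $h$, i.e.\ that $p_k=\xb+\tau_kg_k$ for suitable $\tau_k\downarrow 0$ and $g_k\to h$; once that is secured, $x_k^*\in\hat\partial\dist{C}{p_k}$ with $x_k^*\to x^*$ places $x^*$ in the right-hand side of \eqref{eq : EquivDefSubdDist}.

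This last point is the main obstacle. Writing $p_k-\xb=t_kh_k-\dist{C}{z_k}\,x_k^*$ and noting that $\dist{C}{z_k}/t_k\le\norm{h_k}$ is bounded, I pass to a subsequence along which $\dist{C}{z_k}/t_k\to\alpha\ge 0$, so that $(p_k-\xb)/t_k\to h-\alpha x^*$; in the notation of \eqref{eqn : OmegaDef} this reads $h-\alpha x^*\in\Gamma(p_k-\xb,t_k)$. The delicate step is to reconcile this limiting direction with the prescribed $h$, and this is exactly where the hypothesis $h\in T_C(\xb)$ must be exploited: tangency of $h$ (equivalently $\liminf_{t\downarrow0}\dist{C}{\xb+th}/t=0$), combined with $x_k^*\in\hat N_C(p_k)$ and $\xb\in C$, has to be used to control the defect $\alpha x^*$ and to extract an interior approximating sequence in direction $h$ carrying the subgradients $x_k^*\to x^*$. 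I expect the alignment of the projected direction with $h$ to be the technically demanding part of the argument, whereas the passage through the projections and the distance-subdifferential identity is routine.
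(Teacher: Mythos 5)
Your setup coincides with the paper's: both arguments pass from the exterior points $z_k=\xb+t_k\tilde h_k$ to projections $p_k\in\proj{C}{z_k}$ and transfer the subgradients via $\hat\partial\dist{C}{z_k}\subset\hat N_C(p_k)\cap\B=\hat\partial\dist{C}{p_k}$ (\cite[Example 8.53]{RoWe98}), and your ``$\supseteq$'' part is the same observation as the paper's. However, your write-up stops exactly where the real content of the lemma lies. You reduce everything to showing that the $p_k$ approach $\xb$ from direction $h$, note that along a subsequence $(p_k-\xb)/t_k\to h-\alpha x^*$ with $\alpha=\lim\dist{C}{z_k}/t_k$, and then only announce that the hypothesis $h\in T_C(\xb)$ ``has to be used to control the defect $\alpha x^*$.'' No argument is given for why $\alpha=0$, nor for how to realign the direction if $\alpha>0$; and none of the facts you do establish (unit norm of $x^*$, uniqueness of projections, the proximal-normal inequality) forces $\alpha=0$. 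This is a genuine gap, not a routine detail.

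The paper closes precisely this step in one line, using the $1$-Lipschitz property of the distance function rather than any geometry of the projections: with $h_k:=(p_k-\xb)/t_k$ one has
\[
t_k\norm{h_k-\tilde h_k}=\norm{p_k-z_k}=\dist{C}{\xb+t_k\tilde h_k}\le\dist{C}{\xb+t_kh}+t_k\norm{\tilde h_k-h}=o(t_k),
\]
which is exactly the statement $\alpha=0$; hence $h_k\to h$ and the original scales $t_k$ themselves witness membership of $x^*$ in the right-hand side of \eqref{eq : EquivDefSubdDist} (no rescaling, and no case distinction between $z_k\in C$ and $z_k\notin C$, since $p_k=z_k$ in the former case). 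The tangency hypothesis enters only through the first term: the paper invokes the assertion, made just before the lemma, that $\dist{C}{\cdot}$ is directionally differentiable at $\xb$ in $h$ with ${\rm d}_C^{\prime}(\xb;h)=0$, i.e.\ $\dist{C}{\xb+th}=o(t)$ as $t\downarrow 0$. Be aware that this is a full-limit property, stronger than the $\liminf$ statement that membership in the contingent cone literally provides; it is this single fact that your argument is missing, and you should isolate and justify it (or adopt it as the paper does) -- once it is in place, the rest of your proposal goes through and reproduces the paper's proof.
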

\begin{proof}
  Inclusion $\supset$ follows directly from definition. Now take $x^* \in \partial_a \dist{C}{\xb;h}$ and consider
  $(t_k) \downarrow 0$, $\tilde h_k \to h$, $x_k^* \to x^*$ with $x_k^* \in \hat\partial \dist{C}{\xb + t_k \tilde h_k}$.
  From \cite[Example 8.53]{RoWe98} we obtain
  $\hat\partial \dist{C}{\xb + t_k \tilde h_k} \subset \hat N_C(x_k) \cap \B = \hat\partial \dist{C}{x_k}$
  for every $x_k \in \proj{C}{\xb + t_k \tilde h_k}$. Taking some $x_k \in \proj{C}{\xb + t_k \tilde h_k}$
  and setting $h_k := (x_k - \xb)/t_k$ we obtain $\xb + t_k h_k = x_k \in C$. Moreover, since $h \in T_C(\xb)$ we conclude
  \[t_k \norm{h_k - \tilde h_k} = \dist{C}{\xb + t_k \tilde h_k} \leq \dist{C}{\xb + t_k h} + t_k \norm{\tilde h_k - h} = o(t_k),\]
  showing $h_k \to h$ and finishing the proof.
\end{proof}

\begin{corollary}
  Let $C \subset \R^n$ be a closed set, $\xb \in C$ and $h \in T_C(\xb)$. Then
  \[N_C(\xb;h) = \bigcup_{\alpha > 0} \alpha \partial_a \dist{C}{\xb;h}.\]
\end{corollary}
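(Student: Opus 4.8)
The goal is to prove the corollary

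\[N_C(\xb;h) = \bigcup_{\alpha > 0} \alpha \,\partial_a \dist{C}{\xb;h}\]

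for a closed set $C$, $\xb \in C$ and $h \in T_C(\xb)$. The plan is to establish the two inclusions separately, using the preceding lemma's characterization \eqref{eq : EquivDefSubdDist} of $\partial_a \dist{C}{\xb;h}$ in terms of sequences $\xb + t_k h_k \in C$ with $x_k^* \in \hat\partial \dist{C}{\xb + t_k h_k}$. The key structural fact I will lean on throughout is the well-known relation between regular normals and regular subgradients of the distance function, namely $\hat\partial \dist{C}{x} = \hat N_C(x) \cap \B$ for $x \in C$ (cf. \cite[Example 8.53]{RoWe98}), together with the positive homogeneity that makes $\bigcup_{\alpha>0}\alpha$ convert the unit-ball truncation back into the full cone.

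**The inclusion $\supset$.** First I would take $x^* \in \partial_a \dist{C}{\xb;h}$ and some $\alpha > 0$, and show $\alpha x^* \in N_C(\xb;h)$. By the lemma's formula \eqref{eq : EquivDefSubdDist}, there are $(t_k) \downarrow 0$, $h_k \to h$, $x_k^* \to x^*$ with $\xb + t_k h_k \in C$ and $x_k^* \in \hat\partial \dist{C}{\xb + t_k h_k}$. Since $\xb + t_k h_k \in C$, the cited characterization gives $x_k^* \in \hat N_C(\xb + t_k h_k) \cap \B$, hence $\alpha x_k^* \in \hat N_C(\xb + t_k h_k)$ because the regular normal cone is a cone. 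Passing $\alpha x_k^* \to \alpha x^*$ along $\xb + t_k h_k = \xb + t_k h_k$ with $h_k \to h$, the definition of the directional limiting normal cone yields $\alpha x^* \in N_C(\xb;h)$.

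**The inclusion $\subset$.** Conversely, I would take $x^* \in N_C(\xb;h)$, so there exist $(t_k) \downarrow 0$, $h_k \to h$, $x_k^* \to x^*$ with $\xb + t_k h_k \in C$ and $x_k^* \in \hat N_C(\xb + t_k h_k)$. The subtlety here is that $x^*$ need not lie in the unit ball, so I cannot directly invoke $\hat\partial \dist{C}{\cdot} = \hat N_C(\cdot)\cap\B$. If $x^* = 0$ the claim is trivial (the right-hand side contains $0$ since $h \in T_C(\xb)$ guarantees $\partial_a \dist{C}{\xb;h} \neq \emptyset$ and the union with $\alpha>0$ absorbs the origin), so I assume $x^* \neq 0$ and set $\alpha := \norm{x^*} > 0$. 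For large $k$ I normalize: $x_k^*/\norm{x_k^*} \in \hat N_C(\xb + t_k h_k) \cap \B = \hat\partial \dist{C}{\xb + t_k h_k}$, and $x_k^*/\norm{x_k^*} \to x^*/\alpha$. Applying the lemma's characterization \eqref{eq : EquivDefSubdDist} in the other direction, this exhibits $x^*/\alpha \in \partial_a \dist{C}{\xb;h}$, whence $x^* = \alpha(x^*/\alpha) \in \alpha\,\partial_a \dist{C}{\xb;h}$.

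**Anticipated obstacle.** The one step requiring genuine care is the boundary case $x^* = 0$ together with the coherence of the normalization argument: I must confirm that $\partial_a \dist{C}{\xb;h}$ is nonempty so that the right-hand union is meaningful, and that the normalized sequence genuinely stays in the unit ball for all large $k$. Nonemptiness follows because $h \in T_C(\xb)$ allows one to choose $\xb + t_k h_k \in C$ with $0 \in \hat\partial \dist{C}{\xb + t_k h_k}$ whenever these points are interior projections, so that $0 \in \partial_a \dist{C}{\xb;h}$; the main work is just verifying that the directional structure ($h_k \to h$, points staying in $C$) is preserved under normalization, which it is because normalization does not disturb the base point $\xb + t_k h_k$ or the direction $h_k$. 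Everything else is routine bookkeeping built directly on the preceding lemma and \cite[Example 8.53]{RoWe98}.
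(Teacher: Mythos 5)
Your proof is correct and takes essentially the same route as the paper: the paper likewise reduces everything to the characterization \eqref{eq : EquivDefSubdDist} and then cites \cite[Corollary 1.96]{Mo06a}, whose content is exactly the scaling argument you carry out by hand via $\hat\partial \dist{C}{x} = \hat N_C(x) \cap \B$ from \cite[Example 8.53]{RoWe98} (normalize regular normals into the unit ball, pass to the directional limit, rescale). The only blemish is the phrase ``whenever these points are interior projections'' in your treatment of $x^* = 0$ --- no such condition is needed, since $0 \in \hat N_C(x) \cap \B = \hat\partial \dist{C}{x}$ holds trivially for every $x \in C$.
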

\begin{proof}
  Taking into account \eqref{eq : EquivDefSubdDist}, the claim follows from \cite[Corollary 1.96]{Mo06a}.
\end{proof}

In \cite{IoOut08}, Ioffe and Outrata used subdifferentials of the distance function as the starting point
for deriving the qualification conditions required for calculus rules. The previous lemma allows us to
state a directional counterpart to their basic tool, \cite[Proposition 3.1]{IoOut08}.

\begin{corollary}
  Given $C$, $\xb$ and $h$ as in the previous corollary,
  if $f: \R^n \to \bar \R$ is an lsc function fulfilling
  $f(x) = 0, \ \forall x \in C$ and $f(x) \geq \dist{C}{x}, \forall x \in \R^n$,
  then $\partial_a \dist{C}{\xb;h} \subset \partial_a f(\xb;h)$.
\end{corollary}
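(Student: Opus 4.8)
The plan is to exploit the characterization \eqref{eq : EquivDefSubdDist} of $\partial_a \dist{C}{\xb;h}$ from the preceding lemma, whose whole point is that it permits the perturbation points to be chosen inside $C$. First I would take an arbitrary $x^* \in \partial_a \dist{C}{\xb;h}$ and invoke \eqref{eq : EquivDefSubdDist} to produce sequences $(t_k) \downarrow 0$, $h_k \to h$ and $x_k^* \to x^*$ such that $x_k := \xb + t_k h_k \in C$ and $x_k^* \in \hat\partial \dist{C}{x_k}$ for every $k$. The goal is then to verify, for these same sequences, the three requirements in the definition of $\partial_a f(\xb;h)$, namely $x_k = \xb + t_k h_k$, $f(\xb + t_k h_k) \to f(\xb)$ and $x_k^* \in \hat\partial f(\xb + t_k h_k)$.

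The key step is the pointwise inclusion $\hat\partial \dist{C}{x_k} \subset \hat\partial f(x_k)$, valid precisely because $x_k \in C$. Indeed, a subgradient $x_k^* \in \hat\partial \dist{C}{x_k}$ satisfies $\dist{C}{x} \geq \dist{C}{x_k} + \skalp{x_k^*, x - x_k} + o(\norm{x - x_k})$, and since $x_k \in C$ forces $\dist{C}{x_k} = 0$, the right-hand side simplifies. Combining this with the hypotheses $f(x) \geq \dist{C}{x}$ for all $x$ and $f(x_k) = 0$ (again from $x_k \in C$) yields
\[f(x) \geq \dist{C}{x} \geq f(x_k) + \skalp{x_k^*, x - x_k} + o(\norm{x - x_k}),\]
which is exactly the defining inequality for $x_k^* \in \hat\partial f(x_k)$.

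It remains only to check the convergence condition $f(\xb + t_k h_k) \to f(\xb)$, and this is immediate: since both $\xb$ and each $x_k = \xb + t_k h_k$ lie in $C$, we have $f(x_k) = 0 = f(\xb)$ for all $k$, so the sequence is in fact constant. Together with $x_k^* \to x^*$ and $h_k \to h$, this shows that $x^*$ meets the definition of $\partial_a f(\xb;h)$, completing the argument.

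The argument is essentially routine once the right tool is in place; the only real obstacle is that one cannot work directly from the definition of $\partial_a \dist{C}{\xb;h}$, since the Fr\'echet subgradients there live at points $\xb + t_k h_k$ that need not belong to $C$, where the clean relation $\dist{C}{x_k} = f(x_k) = 0$ fails. This is exactly why the preceding lemma --- which relocates the perturbations into $C$ via the projection --- is indispensable, and it mirrors the role of \cite[Example 8.53]{RoWe98} in the non-directional counterpart \cite[Proposition 3.1]{IoOut08}.
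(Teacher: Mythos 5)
Your proof is correct and follows exactly the paper's own argument: both rely on the characterization \eqref{eq : EquivDefSubdDist} to place the perturbation points inside $C$, and then use the pointwise inclusion $\hat\partial \dist{C}{x} \subset \hat\partial f(x)$ for $x \in C$ implied by the hypotheses on $f$. Your write-up merely fills in the details the paper leaves implicit (the verification of that inclusion and of the condition $f(\xb + t_k h_k) \to f(\xb)$, which is trivial since the sequence is constant).
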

\begin{proof}
  The assumptions on $f$ imply $\hat\partial \dist{C}{x} \subset \hat\partial f(x)$ for every $x \in C$. Hence
  in \eqref{eq : EquivDefSubdDist} we obtain that $x_k^* \in \hat\partial f(\xb + t_k h_k)$ and the claim follows.
\end{proof}

Finally, given $f$ and $\xb$ as before and a direction $(h,\nu) \in \R^{n+1}$,
we introduce the {\em singular subdifferential} of $f$ at $\xb$ {\em in direction} $(h,\nu)$ as
\[\partial^{\infty} f(\xb;(h,\nu)) := \{x^* \in \R^n \mv (x^*,0) \in N_{\epi f}((\xb,f(\xb));(h,\nu))\}.\]
This notion will be used in qualification conditions which mimic their counterparts from the ``standard''
generalized differential calculus.
As it will be shown in Corollary \ref{Cor : SingDifLipsch} below,
$\partial^{\infty} f(\xb;(h,\nu)) \subset \{0\}$ if $f$ is Lipschitz continuous near $\xb$ in direction $h$.

\subsection{Chain rule and its corollaries}

We start this subsection by an auxiliary result concerning separable functions,
which plays a role in deriving the sum rule from the chain rule.
Note that, unlike the classical case \cite[Proposition 10.5]{RoWe98},
we need to impose some mild assumptions in order to obtain a reasonable estimate.

\begin{proposition}[Separable functions] \label{Pro : ProductSubgrad}
  Let $\R^n$ be decomposed as $\R^n = \R^{n_1} \times \ldots \times \R^{n_l}$ and let $x = (x_1,\ldots,x_l)$
  with $x_i \in \R^{n_i}$. Let $f_i : \R^{n_i} \to \bar \R$ be lsc for $i=1,\ldots,l$
  and let all but one of $f_i$ be calm at $\xb_i$ in direction $h_i$.
  Set $f(x) = f_1(x_1) + \ldots + f_l(x_l)$ and consider $\xb=(\xb_1,\ldots,\xb_l) \in \dom f$
  and some direction $(h,\nu)=(h_1,\ldots,h_l,\nu) \in \R^{n+1}$. Then
  \begin{eqnarray}
    \partial f(\xb;(h,\nu)) & \subset & \bigcup_{\nu_i \in Df_i(\xb_i)(h_i) \atop \nu_1+\ldots+\nu_l=\nu}
    (\partial f_1(\xb_1;(h_1,\nu_1)) \times \ldots \times \partial f_l(\xb_l;(h_l,\nu_l))), \\ \label{eqn : SingSubd}
    \partial^{\infty} f(\xb;(h,\nu)) & \subset & \bigcup_{\nu_i \in Df_i(\xb_i)(h_i) \atop \nu_1+\ldots+\nu_l=\nu}
    (\partial^{\infty} f_1(\xb_1;(h_1,\nu_1)) \times \ldots \times \partial^{\infty} f_l(\xb_l;(h_l,\nu_l))).
  \end{eqnarray}
\end{proposition}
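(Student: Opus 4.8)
The plan is to reduce the statement about the separable function $f$ to the pre-image framework of Theorem~\ref{The : ConstSet}, exactly as the classical proof reduces the separable subdifferential formula to a normal-cone calculation on the epigraph. The key geometric fact is that $\epi f$ is, up to a permutation of coordinates, the pre-image of the product of the individual epigraphs under a suitable linear map. Concretely, I would write a point of $\R^{n+1}$ as $(x_1,\ldots,x_l,\alpha)$ and introduce the continuous (in fact linear) mapping $\varphi: \R^{n+1} \to \R^{n_1+1} \times \ldots \times \R^{n_l+1}$ that distributes $\alpha$ across the $l$ coordinate blocks, say $\varphi(x,\alpha) = ((x_1,\alpha),\ldots,(x_l,\alpha))$. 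Then $\epi f$ is not literally $\varphi^{-1}(\prod_i \epi f_i)$ because the $\alpha$-components must be split rather than copied, so I would instead use the standard trick of writing $\alpha = \alpha_1 + \ldots + \alpha_l$ and realize $\epi f$ as a pre-image via the map sending $(x,\alpha_1,\ldots,\alpha_l)$ to $((x_1,\alpha_1),\ldots,(x_l,\alpha_l))$, with the target set $\prod_i \epi f_i$; the linking constraint $\sum_i \alpha_i = \alpha$ is precisely what produces the constraint $\nu_1 + \ldots + \nu_l = \nu$ in the direction and the agreement of the $-1$ (resp.\ $0$) entries in the normal.

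First I would verify that the required metric-subregularity qualification condition for Theorem~\ref{The : ConstSet} holds automatically here. Since $\varphi$ is linear, it is trivially calm in every direction, so I only need the qualification that guarantees subregularity. This is where the calmness hypothesis on all-but-one of the $f_i$ enters: the singular directional subdifferential $\partial^\infty f_i(\xb_i;(h_i,\nu_i))$ of a calm function contains only $0$ (this is the content toward Corollary~\ref{Cor : SingDifLipsch}), and the qualification condition via Proposition~\ref{Pro : MetrSubregVerif} amounts to requiring that a sum of normals, of which at most one can be ``vertical'' (singular), forces all of them to vanish. I would spell out that a normal to $\prod_i \epi f_i$ in the relevant direction decomposes by Proposition~\ref{Pro : ProductSet} into normals to the individual $\epi f_i$, and that the linking map $D^*\varphi$ imposes exactly the compatibility of the last coordinates; the calmness of $l-1$ of the factors then kills all but the single admissible direction and yields $\lambda = 0$.

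Having established subregularity, I would apply the calm version of Theorem~\ref{The : ConstSet} to obtain
\[
N_{\epi f}((\xb,f(\xb));(h,\nu)) \subset \bigcup_{w} D^*\varphi(\cdot)\, N_{\prod_i \epi f_i}(\cdot;w),
\]
then feed this into the definitions \eqref{eq : SubdifDef} of $\partial f(\xb;(h,\nu))$ and of $\partial^\infty f(\xb;(h,\nu))$ by intersecting with the hyperplane $\{$last coordinate $=-1\}$ and $\{$last coordinate $=0\}$ respectively. Because $\varphi$ is linear, $D^*\varphi$ is just its transpose, which is the summation/duplication map, so the image under $D^*\varphi$ of a product normal $(x_1^*,\beta_1,\ldots,x_l^*,\beta_l)$ is $(x_1^*,\ldots,x_l^*,\beta_1+\ldots+\beta_l)$; demanding the last entry equal $-1$ (resp.\ $0$) while each block-normal lies in $N_{\epi f_i}$ in the appropriate direction reproduces exactly the stated product formulas, with the side condition $\nu_i \in Df_i(\xb_i)(h_i)$ coming from the requirement that the chosen block-direction be tangent to $\Gr f_i$. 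I expect the main obstacle to be bookkeeping rather than conceptual: carefully tracking which factor is allowed to be non-calm, ensuring that in the singular case \eqref{eqn : SingSubd} the vanishing last coordinate is correctly split as $0 = \beta_1 + \ldots + \beta_l$ with each $\beta_i \in \{0,-1\}$-type constraints handled by Proposition~\ref{Pro : SubdifVSNormCone}, and confirming that the tangent-cone intersection in Proposition~\ref{Pro : ProductSet} does not lose any directions. The calmness bookkeeping — verifying that exactly one non-calm factor is compatible with subregularity and that this is what forbids the $\pm\infty$ directional components — is the delicate point that distinguishes this directional result from its unconditional classical analogue.
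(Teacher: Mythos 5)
Your reduction goes through the wrong theorem. The epigraph of a separable sum is not a pre-image of $\prod_i \epi f_i$: as you yourself note, the duplication map $(x,\alpha)\mapsto((x_1,\alpha),\ldots,(x_l,\alpha))$ does not work (its pre-image of $\prod_i\epi f_i$ is the epigraph of $\max_i f_i$, which is exactly how the paper treats the pointwise maximum in Proposition \ref{Pro : MaxSubdif}), and your repair --- passing to the variables $(x,\alpha_1,\ldots,\alpha_l)$ and the map $(x,\alpha_1,\ldots,\alpha_l)\mapsto((x_1,\alpha_1),\ldots,(x_l,\alpha_l))$ --- produces a set living in $\R^{n+l}$ (essentially $\prod_i\epi f_i$ itself, up to a permutation of coordinates), not $\epi f\subset\R^{n+1}$. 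Your outline never explains how to pass from normals of that enlarged set back down to normals of $\epi f$, and that missing step is precisely an image (projection) operation: the correct relationship is $\epi f=\varphi\big(\prod_{i=1}^l\epi f_i\big)$ for the summation map $\varphi(x_1,\alpha_1,\ldots,x_l,\alpha_l)=(x_1,\ldots,x_l,\sum_{i=1}^l\alpha_i)$. Consequently Theorem \ref{The : ConstSet} is not applicable here at all; the paper proves the proposition by applying Theorem \ref{The : ConstSetForw} (image sets) to this $\varphi$.

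This misidentification propagates into your treatment of the qualification conditions. Metric subregularity certified via Proposition \ref{Pro : MetrSubregVerif} (using that calm functions have trivial directional singular subdifferentials) is the qualification attached to the pre-image theorem, so there is nothing for it to certify in this situation. The hypotheses of the image theorem are of a different nature: inner semicompactness or inner calmness, w.r.t.\ $\epi f$ in direction $(h,\nu)$, of the fiber mapping $\Psi(x_1,\ldots,x_l,\alpha)=\{(x_1,\alpha_1,\ldots,x_l,\alpha_l)\in\prod_{i=1}^l\epi f_i\mv\sum_{i=1}^l\alpha_i=\alpha\}$. This is exactly where the calmness of all but one $f_i$ enters in the paper's argument: given $(x,\alpha)\in\epi f$ near $(\xb,f(\xb))$, set $\alpha_i=f_i(x_i)$ for the calm indices and put the remainder $\alpha-\sum_{i\neq i_0}f_i(x_i)\ \big(\geq f_{i_0}(x_{i_0})\big)$ into the exceptional index $i_0$; calmness bounds all displacements linearly, yielding inner calmness of $\Psi$, which in turn is what permits the sharpest form of Theorem \ref{The : ConstSetForw} (single fiber point, no extra union over directions in $\mathbb{S}$) and hence the stated product formulas with the constraint $\nu_1+\ldots+\nu_l=\nu$ coming from $D\varphi$. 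Your subregularity argument cannot be repaired into this, because it addresses a different mapping in a different theorem; without invoking the image theorem (or reproducing its sequential proof directly) your outline does not reach the conclusion.
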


  The proof follows from Theorem \ref{The : ConstSetForw}, since with $\varphi: \R^{n+l} \to \R^{n+1}$ given by
  $\varphi(x_1,\alpha_1, \ldots, x_l,\alpha_l) := (x_1, \ldots, x_l,\sum_{i=1}^l \alpha_i)$ we obtain
  $\epi f = \varphi(\prod_{i=1}^l \epi f_i)$.
  Moreover, it can be shown that
  \begin{equation} \label{eq : MappingDef}
  \Psi(x_1, \ldots, x_l,\alpha) :=
  \left\{(x_1,\alpha_1, \ldots, x_l,\alpha_l) \in {\textstyle\prod_{i=1}^l} \epi f_i \,\big\vert\,
  {\textstyle\sum_{i=1}^l} \alpha_i = \alpha\right\}
  \end{equation}
  is inner calm at $((\xb,f(\xb)),(\xb_1,f_1(\xb_1),\ldots,\xb_l,f_l(\xb_l)))$ w.r.t. $\epi f$ in direction $(h,\nu)$,
  due to the calmness of all but one of $f_i$, even when the calmness is considered in the sense of \eqref{eq : AttentCalmness}.
  For the sake of brevity, the technical details are skipped.

\begin{remark} \label{Rem : InnCalm}
  Clearly, the calmness of all but one $f_i$ is just a sufficient condition that can be replaced
  by requiring the inner calmness of mapping \eqref{eq : MappingDef}. Moreover,
  one can also apply Theorem \ref{The : ConstSetForw} without these assumptions
  to obtain more complicated estimates.
\end{remark}

We have also stated the result concerning singular subdifferentials \eqref{eqn : SingSubd}, because we will need it
for deriving qualification conditions for the sum rule.
Later on we will not write down the results for singular subdifferentials although
usually the proofs will be applicable to this case as well.

\begin{theorem}[Directional subdifferentials chain rule] \label{The : ChainRule}
  Let $\varphi: \R^n \to \R^m$ be continuous, $g: \R^m \to \bar\R$ be finite at $\varphi(\xb)$ and lsc and set $f=g \circ \varphi$.
  Given a direction $(h,\nu) \in \R^{n+1}$, assume further that the set-valued mapping
  $F: \R^{n+1} \rightrightarrows \R^{m+1}$ given by $F(x,\alpha) = \epi g - (\varphi(x),\alpha)$
  is metrically subregular at $((\xb,f(\xb)),(0,0))$ in direction $(h,\nu)$. Then
  \begin{eqnarray*}
    \partial f(\xb;(h,\nu)) & \subset &
    \Big( \bigcup\limits_{v \in \{w \in D\varphi(\xb)(h) \mv \atop \qquad \nu \in Dg(\varphi(\xb))(w)\}}
    D^*\varphi(\xb;(h,v)) \partial g(\varphi(\xb);(v,\nu)) \Big) \\
    && \cup \Big( \bigcup\limits_{v \in \{w \in D\varphi(\xb)(0) \cap \mathbb{S} \mv \atop \quad 0 \in Dg(\varphi(\xb))(w)\}}
    D^*\varphi(\xb;(0,v)) \partial g(\varphi(\xb);(v,0)) \Big).
  \end{eqnarray*}
  Moreover, if $\varphi$ is calm at $\xb$ in direction $h$, then
  \[\partial f(\xb;(h,\nu)) \subset \bigcup\limits_{v \in \{w \in D\varphi(\xb)(h) \mv \atop \qquad \nu \in Dg(\varphi(\xb))(w)\}}
  D^*\varphi(\xb;(h,v)) \partial g(\varphi(\xb);(v,\nu)).\]
\end{theorem}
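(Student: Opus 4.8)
The plan is to prove the chain rule by reducing it to the pre-image formula of Theorem~\ref{The : ConstSet}, applied to the epigraph of $f=g\circ\varphi$. The key observation is the epigraphical identity
\[
\epi f = \{(x,\alpha)\in\R^{n+1} \mv g(\varphi(x))\leq\alpha\}
       = \Phi^{-1}(\epi g),
\qquad \Phi(x,\alpha):=(\varphi(x),\alpha),
\]
where $\Phi:\R^{n+1}\to\R^{m+1}$ is continuous because $\varphi$ is. With this identification the feasibility mapping associated with the pre-image rule is exactly $F(x,\alpha)=\epi g-\Phi(x,\alpha)=\epi g-(\varphi(x),\alpha)$, which is precisely the mapping whose directional metric subregularity at $((\xb,f(\xb)),(0,0))$ in direction $(h,\nu)$ is assumed. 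So the hypotheses of Theorem~\ref{The : ConstSet} (with $Q=\epi g$, $C=\epi f$, and the point $(\xb,f(\xb))$, in direction $(h,\nu)$) are met.

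First I would apply Theorem~\ref{The : ConstSet} to obtain an estimate for $N_{\epi f}((\xb,f(\xb));(h,\nu))$. By definition \eqref{eq : SubdifDef}, $\partial f(\xb;(h,\nu))=\{\xi\mv(\xi,-1)\in N_{\epi f}((\xb,f(\xb));(h,\nu))\}$, so it suffices to intersect the normal-cone estimate with the hyperplane $\{(\xi,-1)\}$ and read off the first component. The pre-image formula produces a union over directions $(v,\mu)\in D\Phi((\xb,f(\xb)))(h,\nu)\cap T_{\epi g}(\varphi(\xb),g(\varphi(\xb)))$ of terms $D^*\Phi((\xb,f(\xb));((h,\nu),(v,\mu)))\,N_{\epi g}((\varphi(\xb),g(\varphi(\xb)));(v,\mu))$, together with the analogous singular branch over $(v,\mu)\in D\Phi(\cdots)(0,0)\cap\mathbb{S}$ when $\varphi$ is merely continuous.

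Next I would unravel the derivative and coderivative of the product map $\Phi$. Because $\Phi$ acts as $\varphi$ in the first block and as the identity in the $\alpha$-block, a direct computation (or Remark~\ref{Rem: IdentityMapping} applied blockwise) gives $D\Phi((\xb,f(\xb)))(h,\nu)=\{(v,\nu)\mv v\in D\varphi(\xb)(h)\}$, so the tangent-direction variable $\mu$ is forced to equal $\nu$. Dually, $(\xi,-1)\in D^*\Phi(\cdots)(z^*,\beta^*)$ decouples into $\xi\in D^*\varphi(\xb;(h,v))(z^*)$ with $\beta^*=-1$. A normal $(z^*,\beta^*)\in N_{\epi g}((\varphi(\xb),g(\varphi(\xb)));(v,\nu))$ with $\beta^*=-1$ is, again by \eqref{eq : SubdifDef}, exactly $z^*\in\partial g(\varphi(\xb);(v,\nu))$; and the constraint that $(v,\nu)$ be a tangent direction to $\epi g$ in the relevant sense translates into $\nu\in Dg(\varphi(\xb))(v)$. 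Substituting these identifications collapses the pre-image union into the two stated unions over $v\in\{w\in D\varphi(\xb)(h)\mv\nu\in Dg(\varphi(\xb))(w)\}$ and the sphere-branch $v\in\{w\in D\varphi(\xb)(0)\cap\mathbb{S}\mv 0\in Dg(\varphi(\xb))(w)\}$, yielding the first displayed estimate. The calmness refinement is immediate: when $\varphi$ is calm at $\xb$ in direction $h$, $\Phi$ is calm at $(\xb,f(\xb))$ in direction $(h,\nu)$, so the second (sphere) branch of Theorem~\ref{The : ConstSet} vanishes and only the principal union survives.

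The main obstacle I anticipate is the careful bookkeeping in the sphere branch, where the direction into $\epi g$ need not be of the form $(v,\nu)$ with $v\in D\varphi(\xb)(0)$; here the $\alpha$-component of the normalized direction can be nonzero, and one must verify that intersecting with the $\beta^*=-1$ hyperplane still forces the second direction-component to $0$ so that the branch indeed reads $\partial g(\varphi(\xb);(v,0))$ with $0\in Dg(\varphi(\xb))(v)$. Matching the indexing sets and the coderivative slots $D^*\varphi(\xb;(0,v))$ precisely as in the theorem statement — rather than obtaining a coarser but valid estimate — is where the delicacy lies. Everything else is routine unwinding of the definitions of the directional graphical derivative and coderivative for the separable map $\Phi$.
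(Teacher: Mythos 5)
Your proposal is correct and follows essentially the same route as the paper: identify $\epi f = \Phi^{-1}(\epi g)$ with $\Phi(x,\alpha)=(\varphi(x),\alpha)$, apply Theorem~\ref{The : ConstSet}, decouple the separable map $\Phi$ so that the $\alpha$-block (the identity) forces $\mu=\nu$ and $\beta^*=-1$ (respectively $\tilde\mu=0$ in the sphere branch), and observe that calmness of $\varphi$ gives calmness of $\Phi$, eliminating the second branch. The only cosmetic difference is that where you invoke a ``direct computation'' for the decoupling, the paper makes it formal by writing $\Gr\Phi$ as a coordinate permutation of $\Gr\varphi\times\Gr\I$ and applying Theorem~\ref{The : ConstSetForw} together with Proposition~\ref{Pro : ProductSet}.
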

\begin{proof}
  Take $(x^*,-1) \in N_{\epi f}((\xb,f(\xb));(h,\nu))$ and note that
  $\epi f = \Psi^{-1}(\epi g)$ for $\Psi(x,\alpha) := (\varphi(x),\alpha)$.
  Theorem \ref{The : ConstSet} yields the existence of either $(v,\mu) \in D\Psi(\xb,f(\xb))(h,\nu)$, such that
  \begin{equation*} \label{eq : RegOpt}
    (x^*,-1) \in D^*\Psi((\xb,f(\xb));((h,\nu),(v,\mu))) N_{\epi g}(\Psi(\xb,f(\xb));(v,\mu))
  \end{equation*}
  or $(\tilde v,\tilde \mu) \in D\Psi(\xb,f(\xb))(0,0) \cap \mathbb{S}$ with
  \begin{equation*} \label{eq : IrregOpt}
  (x^*,-1) \in D^*\Psi((\xb,f(\xb));((0,0),(\tilde v,\tilde \mu))) N_{\epi g}(\Psi(\xb,f(\xb));(\tilde v,\tilde \mu)).
  \end{equation*}
  In the first case, there exists $(y^*,\beta) \in N_{\epi g}(\Psi(\xb,f(\xb));(v,\mu))$ with
  \[(x^*,-1,-y^*,-\beta) \in N_{\Gr \Psi}((\xb,f(\xb),\varphi(\xb),f(\xb));(h,\nu,v,\mu)).\]
  Since $\Gr \Psi = H(\Gr \varphi \times \Gr \I)$ for bijective function $H(x,y,a,b)=(x,a,y,b)$,
  applying Theorem \ref{The : ConstSetForw} and Proposition \ref{Pro : ProductSet} we obtain
  $(x^*,-y^*) \in N_{\Gr \varphi}((\xb,\varphi(\xb));(h,v))$ and $(-1,-\beta) \in N_{\Gr \I}((f(\xb),f(\xb));(\nu,\mu))$.
  Note that the former relation also implies $v \in D\varphi(\xb)(h)$ and the latter yields
  $\beta = -1$ and $\mu=\nu$. Consequently $y^* \in \partial g(\varphi(\xb);(v,\nu))$ and $\nu \in Dg(\varphi(\xb))(v)$.

  On the other hand, in the case $(\tilde v,\tilde \mu) \in D\Psi(\xb,f(\xb))(0,0) \cap \mathbb{S}$
  we can proceed analogously with replacing $(h,\nu)$ by $(0,0)$ to obtain
  $x^* \in D^*\varphi(\xb;(0,\tilde v)) \partial g(\varphi(\xb);(\tilde v,0))$,
  $\tilde v \in D\varphi(\xb)(0)$, $0 \in Dg(\varphi(\xb))(\tilde v)$ and $\tilde \mu= 0$, showing also $\tilde v \in \mathbb{S}$.

  Since the calmness of $\varphi$ at $\xb$ in $h$ is equivalent to the calmness of $\Psi$ at $(\xb,f(\xb))$ in direction
  $(h,\nu)$, we obtain only the first possibility and hence the appropriate simpler estimate.
  This finishes the proof.
\end{proof}

Taking into account Proposition \ref{Pro : MetrSubregVerif} and the arguments from the proof
and assuming the calmness of $\varphi$ at $\xb$ in direction $h$, the condition
\[0 \in D^*\varphi(\xb;(h,v)) (\lambda), \, v \in D\varphi(\xb)(h), \, \lambda \in \partial^{\infty} g(\varphi(\xb);(v,\nu))
  \ \Longrightarrow \ \lambda = 0\]
implies the required directional metric subregularity of $F$.

Next consider $f: \R^n \times \R^l \to \bar \R$, a point $(\xb,\yb) \in \dom f$ and denote $f_x := f(\cdot,\yb)$. Then,
the {\em partial subdifferential} of $f$ with respect to $x$ at $\xb$ for $\yb$ in direction $(h,\nu)$
is given as $\partial_x f((\xb,\yb);(h,\nu)) := \partial f_x(\xb;(h,\nu))$.

\begin{corollary}
  Let $f: \R^n \times \R^l \to \bar \R$ be finite at $(\xb,\yb)$ and lsc.
  Given a direction $(h,\nu) \in \R^{n+1}$, assume further that
  the set-valued mapping $F: \R^{n+1} \rightrightarrows \R^{n+l+1}$ given by
  $F(x,\alpha) = \epi f - (x,\yb,\alpha)$ is metrically subregular at $((\xb,f(\xb,\yb)),(0,0,0))$ in $(h,\nu)$. Then
  \[\partial_x f((\xb,\yb);(h,\nu)) \subset \{x^* \mv \exists y^* \textrm{ with } (x^*,y^*) \in \partial f((\xb,\yb);(h,0,\nu))\}.\]
\end{corollary}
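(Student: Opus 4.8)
The statement is a partial-subdifferential chain rule, and the natural strategy is to realize $\partial_x f((\xb,\yb);(h,\nu))$ as a composition and apply the chain rule (Theorem \ref{The : ChainRule}) that was just proved. First I would express $f_x = f(\cdot,\yb)$ as $f_x = f \circ \iota$, where $\iota : \R^n \to \R^n \times \R^l$ is the affine embedding $\iota(x) := (x,\yb)$. Since $\iota$ is continuously differentiable (indeed affine), Remark \ref{Rem: IdentityMapping} gives us full control over its graphical derivative and coderivative: $D\iota(\xb)(h) = (h,0)$, and $D^*\iota(\xb;((h,0),\cdot))$ is governed by the transpose of the Jacobian, which here simply projects onto the first $n$ coordinates. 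In particular $\iota$ is (globally) Lipschitz, hence calm at $\xb$ in direction $h$, so the sharper, calm version of the chain rule applies and we avoid the messy singular ($\mathbb{S}$) branch entirely.

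**Invoking the chain rule.** With $g := f$ and $\varphi := \iota$ in Theorem \ref{The : ChainRule}, the metric-subregularity hypothesis on $F(x,\alpha) = \epi f - (\varphi(x),\alpha) = \epi f - (x,\yb,\alpha)$ is exactly the assumption stated in the corollary, so the theorem delivers
\[
\partial_x f((\xb,\yb);(h,\nu)) = \partial f_x(\xb;(h,\nu)) \subset
\bigcup_{v \in \{w \in D\iota(\xb)(h)\,\mv\,\nu \in Df(\varphi(\xb))(w)\}}
D^*\iota(\xb;(h,v))\,\partial f(\varphi(\xb);(v,\nu)).
\]
The key simplification is that the index set collapses: the only $v \in D\iota(\xb)(h)$ is $v = (h,0)$, by Remark \ref{Rem: IdentityMapping}. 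Hence the union has at most one term, corresponding to $v = (h,0)$, and the inner constraint becomes $\nu \in Df((\xb,\yb))((h,0))$, which is automatically carried inside the definition of the nonempty image $\partial f((\xb,\yb);(h,0,\nu))$.

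**Reading off the coderivative.** It remains to compute $D^*\iota(\xb;(h,(h,0)))$ acting on a vector $(x^*,y^*) \in \partial f((\xb,\yb);(h,0,\nu))$. By Remark \ref{Rem: IdentityMapping}, since $\iota$ is $C^1$ with Jacobian $\nabla \iota(\xb) = \left[\begin{smallmatrix} \I \\ 0 \end{smallmatrix}\right]$ (the inclusion of $\R^n$ into $\R^n \times \R^l$), the coderivative is the transpose, $(\nabla\iota(\xb))^T(x^*,y^*) = x^*$. Thus $D^*\iota(\xb;(h,(h,0)))(x^*,y^*) = \{x^*\}$, and the image under the coderivative is precisely the set of first components $x^*$ of pairs $(x^*,y^*)$ lying in $\partial f((\xb,\yb);(h,0,\nu))$. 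This is exactly the asserted right-hand side, $\{x^* \mv \exists y^* : (x^*,y^*) \in \partial f((\xb,\yb);(h,0,\nu))\}$, completing the proof.

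**Main obstacle.** There is no serious obstacle here; the corollary is essentially a specialization of Theorem \ref{The : ChainRule} to an affine outer embedding. The only point requiring a little care is bookkeeping the direction: one must check that the relevant directional derivative constraint $\nu \in Dg(\varphi(\xb))(v)$ with $v=(h,0)$ matches the direction $(h,0,\nu)$ appearing in $\partial f((\xb,\yb);(h,0,\nu))$, and that the single forced value $v=(h,0)$ (so that the $\mathbb{S}$-branch drops out) is correctly justified via the calmness of the affine map. Both follow immediately from Remark \ref{Rem: IdentityMapping}.
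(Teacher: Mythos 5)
Your proof is correct and follows exactly the paper's own (one-line) argument: write $f(\cdot,\yb) = f \circ \varphi$ with the differentiable embedding $\varphi(x) = (x,\yb)$ and apply Theorem \ref{The : ChainRule}, with Remark \ref{Rem: IdentityMapping} supplying the graphical derivative and coderivative of $\varphi$. Your additional bookkeeping (calmness of the affine map killing the $\mathbb{S}$-branch, and the harmless dropping of the constraint $\nu \in Df((\xb,\yb))(h,0)$ since it only enlarges the right-hand side) is exactly the detail the paper leaves implicit.
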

\begin{proof}
  Since $f(\cdot,\yb) = f \circ \varphi$ with differentiable $\varphi(x) = (x,\yb)$,
  Theorem \ref{The : ChainRule} yields the statement.
\end{proof}

From Proposition \ref{Pro : MetrSubregVerif} we infer that
the above imposed metric subregularity of $F$ is implied by the condition
\[(0,\lambda) \in \partial^{\infty} f((\xb,\yb);(h,0,\nu)) \ \Longrightarrow \ \lambda = 0.\]

\begin{corollary}[Directional subdifferential sum rule]\label{Cor : SumRule}
  Suppose $f=f_1 + \ldots + f_l$, where $f_i : \R^n \to \bar \R$ are finite at $\xb$, lsc
  and all but one are calm at $\xb$ in direction $h \in \R^n$.
  Assume further that the set-valued mapping
  $F: \R^{n+1} \rightrightarrows \R^{nl+1}$ given by $F(x,\alpha) = \epi g - (x,\ldots,x,\alpha)$ with $g : \R^{nl} \to \bar \R$
  defined via $g(x_1,\ldots,x_l) = f_1(x_1) + \ldots + f_l(x_l)$, is metrically subregular at $((\xb,f(\xb)),(0,\ldots,0,0))$
  in direction $(h,\nu)$. Then
  \[\partial f(\xb;(h,\nu)) \subset \partial f_1(\xb;(h,\nu)) + \ldots + \partial f_l(\xb;(h,\nu)).\]
\end{corollary}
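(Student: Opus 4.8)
The plan is to derive the sum rule as a direct consequence of the chain rule (Theorem \ref{The : ChainRule}) applied to the separable function $g(x_1,\ldots,x_l)=f_1(x_1)+\ldots+f_l(x_l)$ composed with the diagonal map. First I would write $f=g\circ\varphi$ where $\varphi:\R^n\to\R^{nl}$ is the linear (hence continuously differentiable) diagonal embedding $\varphi(x)=(x,\ldots,x)$. Since $\varphi$ is linear, Remark \ref{Rem: IdentityMapping} applies: one has $D\varphi(\xb)(h)=(h,\ldots,h)$, and $D^*\varphi(\xb;(h,v))(y^*)$ is nonempty only when $v=(h,\ldots,h)$, in which case it equals $\nabla\varphi(\xb)^T y^*=\sum_{i=1}^l y_i^*$ for $y^*=(y_1^*,\ldots,y_l^*)$. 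Moreover $\varphi$ is globally Lipschitz, so it is certainly calm at $\xb$ in direction $h$, which means the \emph{second} (spherical) term in the chain rule vanishes and we obtain directly the cleaner one-sided estimate
\[
\partial f(\xb;(h,\nu)) \subset \bigcup_{v\in\{w\in D\varphi(\xb)(h)\mv \nu\in Dg(\varphi(\xb))(w)\}}
D^*\varphi(\xb;(h,v))\,\partial g(\varphi(\xb);(v,\nu)).
\]

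Next I would feed in the separable structure of $g$. Because $D\varphi(\xb)(h)=\{(h,\ldots,h)\}$ is a singleton, the union over $v$ collapses to the single direction $v=(h,\ldots,h)$, and the side condition $\nu\in Dg(\varphi(\xb))(v)$ together with Proposition \ref{Pro : ProductSubgrad} governs how $\nu$ splits. Applying Proposition \ref{Pro : ProductSubgrad} to $g$ at $\varphi(\xb)=(\xb,\ldots,\xb)$ in direction $(h,\ldots,h,\nu)$ — which is legitimate precisely because all but one of the $f_i$ are calm at $\xb$ in direction $h$, exactly the hypothesis of the corollary — yields
\[
\partial g(\varphi(\xb);(v,\nu)) \subset \bigcup_{\nu_i\in Df_i(\xb)(h),\ \nu_1+\ldots+\nu_l=\nu}
\partial f_1(\xb;(h,\nu_1))\times\ldots\times\partial f_l(\xb;(h,\nu_l)).
\]
Combining this with the diagonal-map coderivative $D^*\varphi(\xb;(h,v))(y^*)=\sum_i y_i^*$ computed above, any $x^*\in\partial f(\xb;(h,\nu))$ is then of the form $x^*=\sum_{i=1}^l y_i^*$ with $y_i^*\in\partial f_i(\xb;(h,\nu_i))$ and $\sum_i\nu_i=\nu$, which is precisely the claimed inclusion $\partial f(\xb;(h,\nu))\subset\partial f_1(\xb;(h,\nu))+\ldots+\partial f_l(\xb;(h,\nu))$ (the constraint $\nu_i\in Df_i(\xb)(h)$ is harmlessly absorbed, since $\partial f_i(\xb;(h,\nu_i))=\emptyset$ whenever $\nu_i\notin Df_i(\xb)(h)$ anyway).

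The one point requiring genuine care — and the main obstacle — is checking that the qualification conditions line up so that the chain rule is actually applicable. The corollary hypothesizes metric subregularity of $F(x,\alpha)=\epi g-(x,\ldots,x,\alpha)$ at $((\xb,f(\xb)),0)$ in direction $(h,\nu)$, whereas Theorem \ref{The : ChainRule} with $\Psi(x,\alpha)=(\varphi(x),\alpha)$ needs metric subregularity of $(x,\alpha)\mapsto \epi g-(\Psi(x,\alpha))=\epi g-(x,\ldots,x,\alpha)$ in direction $(h,\nu)$; these are literally the same mapping, so the hypothesis transfers verbatim and no separate verification is needed. I would therefore simply invoke Theorem \ref{The : ChainRule} directly. (Should one wish to state the qualification condition in primal-dual form, one could further note via Proposition \ref{Pro : MetrSubregVerif}, the diagonal coderivative, and the singular analogue \eqref{eqn : SingSubd} of Proposition \ref{Pro : ProductSubgrad} that subregularity of $F$ is implied by the implication $0\in\sum_i\lambda_i$, $\lambda_i\in\partial^\infty f_i(\xb;(h,\nu_i))$, $\sum_i\nu_i=\nu$ $\Rightarrow$ all $\lambda_i=0$; but this is a supplementary remark rather than part of the proof.) The remaining bookkeeping — collapsing the singleton union, absorbing empty subdifferentials, and tracking the scalar directions $\nu_i$ — is routine.
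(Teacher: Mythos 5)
Your construction is exactly the paper's own proof: write $f=g\circ\varphi$ with the diagonal embedding $\varphi(x)=(x,\ldots,x)$, invoke Theorem \ref{The : ChainRule} (whose subregularity hypothesis is indeed verbatim the one assumed here, since $\epi g-(\varphi(x),\alpha)=\epi g-(x,\ldots,x,\alpha)$), use Remark \ref{Rem: IdentityMapping} for $D\varphi$ and $D^*\varphi$, and decompose $\partial g$ via Proposition \ref{Pro : ProductSubgrad}. Up to the last step everything you do is correct, and you correctly observe that the calmness of the linear map $\varphi$ removes the spherical term and that the hypotheses of Proposition \ref{Pro : ProductSubgrad} are precisely the calmness assumptions of the corollary.

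The genuine gap is the final identification. What your chain of inclusions actually yields is
\[
\partial f(\xb;(h,\nu)) \ \subset \bigcup_{\nu_i \in Df_i(\xb)(h),\ \nu_1+\ldots+\nu_l=\nu}
\bigl(\partial f_1(\xb;(h,\nu_1)) + \ldots + \partial f_l(\xb;(h,\nu_l))\bigr),
\]
and this is \emph{not} ``precisely the claimed inclusion'': in the corollary every summand carries the same scalar direction $\nu$, whereas here each $f_i$ gets its own $\nu_i$, and $\partial f_i(\xb;(h,\nu_i))$ bears no inclusion relation to $\partial f_i(\xb;(h,\nu))$ when $\nu_i\neq\nu$. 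This silent replacement of $\nu_i$ by $\nu$ cannot be repaired, because the statement with a common $\nu$ is false as written: take $l=2$, $f_1\equiv 0$, $f_2(x)=x$ on $\R$, $\xb=0$, $(h,\nu)=(1,1)$. All hypotheses hold ($f_1,f_2$ are smooth, hence calm; $g(x_1,x_2)=x_2$, and $F(x,\alpha)=\epi g-(x,x,\alpha)$ is globally metrically subregular with constant $1$), and $\partial f(0;(1,1))=\{1\}$ since $f(x)=x$; but $\partial f_1(0;(1,1))=\emptyset$, because $(1,1)$ lies in $T_{\epi f_1}(0,0)\setminus T_{\bd \epi f_1}(0,0)$ for $\epi f_1=\R\times\R_+$, so $N_{\epi f_1}((0,0);(1,1))=\{0\}$ contains no element of the form $(\xi,-1)$; hence the right-hand side of the corollary is empty. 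The correct split is $\nu_1=0$, $\nu_2=1$: indeed $\partial f_1(0;(1,0))+\partial f_2(0;(1,1))=\{0\}+\{1\}=\{1\}$. In short, your derivation proves the (correct) decomposed form displayed above --- which is also all that the paper's one-line proof can deliver, since Proposition \ref{Pro : ProductSubgrad} itself carries the union over $\nu_1+\ldots+\nu_l=\nu$ --- and the defect lies in the corollary's formulation; your write-up should state the conclusion in the decomposed form rather than assert that it coincides with the printed one.
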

\begin{proof}
  Note that $f=g \circ \varphi$ for $\varphi : \R^n \to \R^{nl}$ given by $\varphi(x) := (x,\ldots,x)$.
  Now the proof follows from
  Theorem \ref{The : ChainRule}, Proposition \ref{Pro : ProductSubgrad} and Remark \ref{Rem: IdentityMapping}.
\end{proof}

Again, taking into account Propositions \ref{Pro : MetrSubregVerif} and \ref{Pro : ProductSubgrad},
the metric subregularity of $F$ from Corollary \ref{Cor : SumRule} is implied by the condition
\begin{equation} \label{eq : SumCQ}
  \sum_{i=1}^l \lambda_i = 0, \lambda_i \in \partial^{\infty} f_i(\xb,(h,\nu)) \ \Longrightarrow \ \lambda_i = 0.
\end{equation}
Inclusion \eqref{eq : SumCQ} holds true in particular if all but one of $f_i$ are Lipschitz continuous near $\xb$ in direction $h$.

Naturally, Remark \ref{Rem : InnCalm} applies here as well.

\subsection{Directional limiting subdifferentials of special functions}

We conclude this section with estimates for directional limiting subdifferentials of
the pointwise maximum and minimum of a finite family of functions, the distance function and the value function.

\begin{proposition} \label{Pro : MaxSubdif}
  Suppose $f = \max \{f_1, \ldots, f_l\}$ for functions $f_i : \R^n \to \bar \R$ that are continuous at $\xb$.
  Given a direction $(h,\nu) \in \R^{n+1}$, assume further that the set-valued mapping
  $F: \R^{n+1} \rightrightarrows \R^{l(n+1)}$ given by $F(x,\alpha) = \prod_{i=1}^l (\epi f_i - (x,\alpha))$
  is metrically subregular at $((\xb,f(\xb)),((0,0),\ldots,(0,0))$ in $(h,\nu)$.
  Then one has
  \[\partial f(\xb;(h,\nu)) \subset \bigcup_{J \subset I_0(\bar x,(h,\nu))}
  \Big( \co \{\partial f_i(\xb;(h,\nu)) \mv i \in J \}
  + \textstyle\sum_{i \in I(\bar x,(h,\nu)) \setminus J} \partial^{\infty} f_i(\xb;(h,\nu)) \Big),\]
  where
  \begin{eqnarray}
    I(\bar x,(h,\nu)) & := & \{i \mv f(\xb) = f_i(\xb) \textrm{ and } \nu \in Df_i(\xb)h\}, \nonumber \\ \label{eqn : I_0def}
    I_0(\bar x,(h,\nu)) & := & \{i \in I(\bar x,(h,\nu)) \mv \partial f_i(\xb;(h,\nu)) \neq \emptyset\}.
  \end{eqnarray}
\end{proposition}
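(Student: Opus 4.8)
The plan is to represent $f=\max\{f_1,\dots,f_l\}$ as a composition to which the chain rule (Theorem \ref{The : ChainRule}) applies, and then to resolve the max-specific combinatorics. The natural setup is $f = g \circ \varphi$ with $\varphi(x)=(x,\dots,x)$ ($l$ copies) into $\R^{nl}$ and $g(x_1,\dots,x_l)=\max\{f_1(x_1),\dots,f_l(x_l)\}$; equivalently, following the pattern already used in the proof of Theorem \ref{The : ChainRule}, I would work directly at the level of epigraphs via the observation $\epi f = \bigcap_{i=1}^l \epi f_i$ (as subsets of $\R^{n+1}$), so that $(\xb,f(\xb)) \in \epi f$ and the prescribed metric subregularity of $F(x,\alpha)=\prod_{i=1}^l(\epi f_i-(x,\alpha))$ is exactly the qualification condition needed to apply Corollary \ref{Cor : Cap} to this intersection. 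First I would take $x^*\in\partial f(\xb;(h,\nu))$, i.e. $(x^*,-1)\in N_{\epi f}((\xb,f(\xb));(h,\nu))$, and apply Corollary \ref{Cor : Cap} to get
\[
(x^*,-1)\in N_{\epi f}((\xb,f(\xb));(h,\nu)) \subset \sum_{i=1}^l N_{\epi f_i}((\xb,f(\xb));(h,\nu)).
\]

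Next I would analyze each summand $N_{\epi f_i}((\xb,f(\xb));(h,\nu))$. The key point is that a generating sequence $(\xb,f(\xb))+t_k(h_k,\nu_k)$ must lie in $\epi f_i$ with $(h_k,\nu_k)\to(h,\nu)$; since it also lies in $\epi f = \bigcap \epi f_j$, only indices $i$ with $f_i(\xb)=f(\xb)$ can contribute an ``active'' constraint, and the direction $(h,\nu)$ must satisfy $\nu\in Df_i(\xb)(h)$ whenever the epigraph constraint is tight in the limit. This is precisely what singles out the index set $I(\xb,(h,\nu))$ in \eqref{eqn : I_0def}: for $i\notin I(\xb,(h,\nu))$ the corresponding normal cone contributes $\{0\}$ because the constraint $\epi f_i$ is locally inactive along direction $(h,\nu)$ (either $f_i(\xb)<f(\xb)$ so $(\xb,f(\xb))$ lies in the interior of $\epi f_i$, or $\nu\notin Df_i(\xb)(h)$ so the direction points strictly inside). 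For active indices $i\in I(\xb,(h,\nu))$, a normal $(\xi_i,\beta_i)$ to $\epi f_i$ has $\beta_i\le 0$, and I would split according to whether $\beta_i<0$ (giving, after normalization, $-\xi_i/\beta_i \in \partial f_i(\xb;(h,\nu))$, so this contributes a nonnegative multiple of a regular/limiting subgradient, with the index then forced into $I_0$ since $\partial f_i(\xb;(h,\nu))\ne\emptyset$) or $\beta_i=0$ (giving $\xi_i\in\partial^\infty f_i(\xb;(h,\nu))$).

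Collecting the total: the sum $\sum_i(\xi_i,\beta_i)=(x^*,-1)$ forces $\sum_{i}\beta_i=-1$, so the nonzero $\beta_i$ are a finite collection of negative numbers summing to $-1$. Writing $-\beta_i=\alpha_i\ge 0$ for the indices with $\beta_i<0$ (these sum to $1$), the $x$-components yield $x^* = \sum_{i:\beta_i<0}\alpha_i\,(\text{subgradient of }f_i) + \sum_{i:\beta_i=0}\xi_i$. The first sum is a convex combination over the index set $J:=\{i:\beta_i<0\}\subset I_0(\xb,(h,\nu))$, landing in $\co\{\partial f_i(\xb;(h,\nu))\mid i\in J\}$; the second is a sum of singular subgradients over $i\in I(\xb,(h,\nu))\setminus J$. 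Taking the union over all admissible $J\subset I_0(\xb,(h,\nu))$ yields exactly the asserted inclusion. The main obstacle I anticipate is the bookkeeping showing that inactive indices contribute only zero and that the active-index normal cones decompose cleanly into the $\partial f_i$ and $\partial^\infty f_i$ pieces with the correct sign and normalization; this requires care in passing to the limit in the defining sequences and in verifying that the relevant directional constraints ($f_i(\xb)=f(\xb)$, $\nu\in Df_i(\xb)(h)$) survive the limit, but it is a direct if tedious extension of the epigraphical normal-cone arguments already employed in Proposition \ref{Pro : SubdifVSNormCone} and Theorem \ref{The : ChainRule}.
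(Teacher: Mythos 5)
Your proposal is correct and follows essentially the same route as the paper's own proof: applying Corollary \ref{Cor : Cap} to $\epi f = \bigcap_{i=1}^l \epi f_i$ (with the assumed metric subregularity of $F$ as the qualification condition), discarding inactive indices via continuity of $f_i$ and the fact that directions pointing strictly inside $\epi f_i$ yield zero normals, and then splitting the active normals $(x_i^*,-\beta_i)$ by the sign of $\beta_i$ into normalized subgradients (whose weights sum to $1$, giving the convex hull over $J \subset I_0$) and singular subgradients over $I \setminus J$. The only cosmetic difference is your opening mention of the chain-rule composition $g\circ\varphi$, which you then rightly abandon in favor of the epigraph-intersection argument that the paper uses.
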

\begin{proof}
  Since $\epi f= \bigcap_{i=1}^l \epi f_i$, Corollary \ref{Cor : Cap}
  yields that for $(x^*,-1) \in N_{\epi f}((\xb,f(\xb)),(h,\nu))$ there exists
  $(x_i^*,-\beta_i) \in N_{\epi f_i}((\xb,f(\xb)),(h,\nu))$ such that
  $(x^*,-1) = \sum_{i=1}^l (x_i^*,-\beta_i)$. If $f_i(\xb) < f(\xb)$ for some $i$, we have $(x_i^*,-\beta_i) = (0,0)$
  due to continuity of $f_i$ and the same holds true if $\nu \notin Df_i(\xb)h$. Hence we consider only $i \in I(\bar x,(h,\nu))$.

  Given a function $g$ and a point $x \in \dom g$, one always has that $(y^*,-\beta) \in \hat N_{\epi g}(x,g(x))$
  implies $\beta \geq 0$ and hence we obtain $\beta_i \geq 0$ for all $i \in I(\bar x,(h,\nu))$.
  Setting $J := \{i \in I(\bar x,(h,\nu)) \mv \beta_i > 0 \}$, for $i \in J$
  we obtain $\beta_i (x_i^* / \beta_i,-1) \in N_{\epi f_i}((\xb,f_i(\xb)),(h,\nu))$
  and thus $x_i^* \in \beta_i \partial f_i(\xb;(h,\nu))$, showing also $J \subset I_0(\bar x,(h,\nu))$.
  On the other hand, for $i \notin J$ we have $\beta_i=0$ and hence $x_i^* \in \partial^{\infty} f_i(\xb;(h,\nu))$.
  This completes the proof.
\end{proof}

Taking into account \eqref{eq : capCQ}
and the mentioned fact that $(y^*,-\beta) \in \hat N_{\epi g}(x,g(x))$ implies $\beta \geq 0$,
the required metric subregularity of $F$ from Proposition \ref{Pro : MaxSubdif} is again implied by condition \eqref{eq : SumCQ}.

\begin{proposition} \label{Pro : MinSubdif}
  Suppose $f = \min \{f_1, \ldots, f_l\}$ for lsc functions $f_i : \R^n \to \bar \R$ and $\xb \in \dom f$.
  Given a direction $(h,\nu) \in \R^{n+1}$, consider index set $I_0(\bar x,(h,\nu))$ given by \eqref{eqn : I_0def}. Then one has
  \[\partial f(\xb;(h,\nu)) \subset \bigcup_{i \in I_0(\bar x,(h,\nu))} \partial f_i(\xb;(h,\nu)).\]
\end{proposition}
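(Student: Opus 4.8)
The plan is to mimic the structure of the maximum-rule proof (Proposition~\ref{Pro : MaxSubdif}), but to exploit the fact that the epigraph of a \emph{minimum} is a \emph{union} rather than an intersection. Concretely, since $f = \min\{f_1,\dots,f_l\}$, one has $\epi f = \bigcup_{i=1}^l \epi f_i$. Thus I would translate the claim into normal-cone language via the definition of the directional subdifferential \eqref{eq : SubdifDef}: it suffices to show that $(x^*,-1)\in N_{\epi f}((\xb,f(\xb));(h,\nu))$ forces $(x^*,-1)\in N_{\epi f_i}((\xb,f(\xb));(h,\nu))$ for some $i \in I_0(\xb,(h,\nu))$.

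First I would apply Proposition~\ref{Prop : Cup} (the union rule for directional normal cones) to the set $\epi f = \bigcup_{i=1}^l \epi f_i$ at the point $(\xb,f(\xb))$ in the direction $(h,\nu)$. This immediately yields
\[
N_{\epi f}((\xb,f(\xb));(h,\nu)) \subset \bigcup_{i \in I((\xb,f(\xb)),(h,\nu))} N_{\epi f_i}((\xb,f(\xb));(h,\nu)),
\]
where the index set from Proposition~\ref{Prop : Cup} collects those $i$ with $(\xb,f(\xb))\in\epi f_i$ and $(h,\nu)\in T_{\epi f_i}((\xb,f(\xb)))$. Note that no qualification condition is needed here, which is why this proposition (unlike the max rule) carries no metric-subregularity hypothesis.

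The remaining work is purely bookkeeping on the index set: I must show that the indices surviving from Proposition~\ref{Prop : Cup} are exactly those in $I_0(\xb,(h,\nu))$. The condition $(\xb,f(\xb))\in\epi f_i$ combined with $f\leq f_i$ and $f(\xb)=f_i(\xb)$ for the minimizing indices gives $f(\xb)=f_i(\xb)$; here I would use lower semicontinuity of $f_i$ to rule out $f_i(\xb)<f(\xb)$, so that only $i$ with $f_i(\xb)=f(\xb)$ contribute. The tangency condition $(h,\nu)\in T_{\epi f_i}((\xb,f(\xb)))$ gives $\nu\in Df_i(\xb)(h)$ (via the graphical-derivative characterization), matching the definition of $I(\xb,(h,\nu))$. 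Finally, for the surviving $i$, having $(x^*,-1)\in N_{\epi f_i}((\xb,f(\xb));(h,\nu))$ means precisely $x^*\in\partial f_i(\xb;(h,\nu))$, which forces $\partial f_i(\xb;(h,\nu))\neq\emptyset$ and hence $i \in I_0(\xb,(h,\nu))$ by \eqref{eqn : I_0def}.

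The only genuinely delicate point---and where I would expect to spend care---is verifying that a sequence of regular normals to $\bigcup_j \epi f_j$ near $(\xb,f(\xb))$ is, along a subsequence, a sequence of regular normals to a single $\epi f_i$; but this is exactly the content of the short argument in the proof of Proposition~\ref{Prop : Cup}, namely that $\xb+t_kh_k\in C$ lies in some fixed $C_i$ for infinitely many $k$ and that $\hat N_{C_i}\supset\hat N_C$ pointwise there. Invoking Proposition~\ref{Prop : Cup} as a black box therefore dispatches this obstacle, and the whole proof reduces to the index-set identification described above.
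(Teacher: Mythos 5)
Your overall route is exactly the paper's: write $\epi f=\bigcup_{i=1}^l \epi f_i$, invoke Proposition~\ref{Prop : Cup} to obtain $(x^*,-1)\in N_{\epi f_i}((\xb,f(\xb));(h,\nu))$ for some single index $i$, and then identify which indices can survive. Most of the bookkeeping is also right, though one attribution is off: $(\xb,f(\xb))\in\epi f_i$ gives $f_i(\xb)\le f(\xb)$, while the reverse inequality $f_i(\xb)\ge f(\xb)$ is forced simply by $f=\min\{f_1,\ldots,f_l\}\le f_i$; lower semicontinuity plays no role here, and the case $f_i(\xb)<f(\xb)$ that you propose to exclude by lsc is precisely the one excluded by the definition of the minimum. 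That is harmless.

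There is, however, one genuinely wrong step. You claim that the tangency condition $(h,\nu)\in T_{\epi f_i}((\xb,f(\xb)))$ ``gives $\nu\in Df_i(\xb)(h)$ via the graphical-derivative characterization''. This implication is false: tangents to the epigraph need not be tangents to the graph. For $f_i\equiv 0$ on $\R$ one has $T_{\epi f_i}(0,0)=\R\times\R_+$, so $(1,1)$ is an epigraphical tangent, yet $Df_i(0)(1)=\{0\}$. Since membership in $I(\xb,(h,\nu))$, and hence in $I_0(\xb,(h,\nu))$ from \eqref{eqn : I_0def}, requires $\nu\in Df_i(\xb)(h)$, your argument as written does not place $i$ in the index set of the statement. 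The correct argument (which the paper also leaves implicit) uses the full normal-cone information rather than mere tangency: take $(t_k)\downarrow 0$, $(h_k,\nu_k)\to(h,\nu)$ and $(x_k^*,\beta_k)\to(x^*,-1)$ with $(x_k^*,\beta_k)\in\hat N_{\epi f_i}((\xb,f(\xb))+t_k(h_k,\nu_k))$. At any point $(x,\alpha)\in\epi f_i$ with $\alpha>f_i(x)$, both $(0,1)$ and $(0,-1)$ are tangent to $\epi f_i$, so every regular normal there has vanishing last component; since $\beta_k\to -1\neq 0$, for large $k$ the base points must satisfy $f(\xb)+t_k\nu_k=f_i(\xb+t_kh_k)$, i.e.\ they lie on $\Gr f_i$ (recall $f(\xb)=f_i(\xb)$). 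Passing to the limit yields $(h,\nu)\in T_{\Gr f_i}(\xb,f_i(\xb))$, that is, $\nu\in Df_i(\xb)(h)$. With this short repair your proof is complete and coincides with the paper's.
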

\begin{proof}
  Since $\epi f= \bigcup_{i=1}^l \epi f_i$, Proposition \ref{Prop : Cup} yields that for
  $(x^*,-1) \in N_{\epi f}((\xb,f(\xb)),(h,\nu))$ there exists $i$ such that
  $(x^*,-1) \in N_{\epi f_i}((\xb,f(\xb)),(h,\nu))$. Moreover, $(\xb,f(\xb)) \in \epi f_i$
  implies $f_i(\xb) \leq f(\xb)$ while $f_i(\xb) \geq f(\xb)$ follows from definition of $f$
  and consequently $x^* \in \partial f_i(\xb;h)$. This verifies that $i \in I_0(\bar x,h)$ and completes the proof.
\end{proof}

\begin{theorem}[Directional subdifferentials of value function] \label{The : PartialMin}
  Consider an lsc function $f: \R^n\times\R^l \to \bar\R$, set $\vartheta(y) = \inf_{x \in \R^n} f(x,y)$
  and assume that $\vartheta$ is finite at $\yb$.
  Let $S: \R^l \rightrightarrows \R^n$ be the solution mapping given by $S(y) = \argmin f(\cdot,y)$
  and consider a direction $(v,\mu) \in \R^{l+1}$.
  If $S$ is inner semicompact at $\yb$ in direction $v$, then
  \begin{eqnarray*}
    y^* \in \partial \vartheta(\yb;(v,\mu)) & \Longrightarrow &
    (0,y^*) \in \bigcup\limits_{\xb \in S(\yb)} \bigg(
    \Big( \bigcup\limits_{h \in \{h \mv \mu \in Df(\xb,\yb)(h,v)\}} \partial f((\xb,\yb);(h,v,\mu)) \Big) \bigg. \\
    && \bigg. \hspace{2.2cm} \cup \ \Big( \bigcup\limits_{h \in \{h \in \mathbb{S} \mv 0 \in Df(\xb,\yb)(h,0)\}}
    \partial f((\xb,\yb);(h,0,0)) \Big) \bigg).
  \end{eqnarray*}
  Moreover, if there exists $\xb \in S(\yb)$ such that $S$ is inner semicontinuous at $(\yb,\xb)$ in direction $v$,
  then the previous estimate holds with this $\xb$, i.e., the union over $\xb \in S(\yb)$ is superfluous.
  Finally, if there exists $\xb \in S(\yb)$ such that $S$ is inner calm at $(\yb,\xb)$ in direction $v$, then
  the estimate reduces to
  \[y^* \in \partial \vartheta(\yb;(v,\mu)) \ \Longrightarrow \
  (0,y^*) \in \bigcup\limits_{h \in \{h \mv \mu \in Df(\xb,\yb)(h,v)\}} \partial f((\xb,\yb);(h,v,\mu)).\]
\end{theorem}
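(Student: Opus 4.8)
The plan is to realize $\epi\vartheta$ as the image of $\epi f$ under the linear projection $P:\R^{n}\times\R^l\times\R\to\R^l\times\R$, $P(x,y,\alpha):=(y,\alpha)$, and then to invoke the image-set formula, Theorem \ref{The : ConstSetForw}. Since $f$ is lsc the set $C:=\epi f$ is closed, and one checks that $P(\epi f)=\epi\vartheta$ up to the boundary points $(y,\vartheta(y))$ at which the infimum defining $\vartheta(y)$ is not attained; in particular $P(\epi f)\subset\epi\vartheta$, the two sets share all points with $\alpha>\vartheta(y)$, and they agree on the boundary precisely where $S(y)\neq\emptyset$. The fibre of $\Psi(y,\alpha):=P^{-1}(y,\alpha)\cap\epi f$ over the base point $(\yb,\vartheta(\yb))$ is $\Psi(\yb,\vartheta(\yb))=\{(\xb,\yb,\vartheta(\yb))\mv \xb\in S(\yb)\}$, so the outer union over $\xb\in\Psi(\yb)$ in Theorem \ref{The : ConstSetForw} becomes exactly the union over $\xb\in S(\yb)$.

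Next I would transfer the inner assumptions from $S$ to $\Psi$. Since $P$ is linear it is globally Lipschitz, so the directional Lipschitz hypothesis on $\varphi$ in Theorem \ref{The : ConstSetForw} holds automatically. Given a sequence $(y_k,\alpha_k)\to(\yb,\vartheta(\yb))$ from direction $(v,\mu)$ lying in $P(\epi f)$, its $y$-part $y_k\to\yb$ converges from direction $v$; inner semicompactness of $S$ in direction $v$ then yields, along a subsequence, minimizers $x_k\in S(y_k)$ with $x_k\to\xb\in S(\yb)$, and because $f(x_k,y_k)=\vartheta(y_k)\le\alpha_k$ the triples $(x_k,y_k,\alpha_k)$ are convergent selections of $\Psi$. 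This shows $\Psi$ is inner semicompact at $(\yb,\vartheta(\yb))$ w.r.t. $P(\epi f)$ in direction $(v,\mu)$; the same device turns inner semicontinuity of $S$ at $(\yb,\xb)$ into inner semicontinuity of $\Psi$ (fixing the limit $\xb$), and inner calmness of $S$ into inner calmness of $\Psi$, since then $(x_k-\xb)/t_k$ is bounded and hence so is the whole difference quotient $((x_k,y_k,\alpha_k)-(\xb,\yb,\vartheta(\yb)))/t_k=((x_k-\xb)/t_k,v_k,\mu_k)$.

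With these reductions in place the conclusion follows by feeding the image dual $(y^*,-1)$ into Theorem \ref{The : ConstSetForw} and simplifying. Because $P$ is linear, Remark \ref{Rem: IdentityMapping} gives $D^\ast P(\,\cdot\,)(y^*,-1)=\{(0,y^*,-1)\}$, so the condition $D^\ast P(\ldots)(y^*,-1)\cap N_{\epi f}(\ldots)\neq\emptyset$ collapses to $(0,y^*,-1)\in N_{\epi f}((\xb,\yb,\vartheta(\yb));(h,v,\mu))$, which by definition \eqref{eq : SubdifDef} reads $(0,y^*)\in\partial f((\xb,\yb);(h,v,\mu))$. The preimage directions in the first union have the form $(h,v,\mu)$ forced by $(v,\mu)\in DP(\,\cdot\,)(h,v,\mu)$, and the tangency requirement $(h,v,\mu)\in T_{\epi f}$ is exactly $\mu\in Df(\xb,\yb)(h,v)$; the second (sphere) union corresponds to preimage directions $(h,0,0)$ with $\norm{h}=1$, i.e. to $0\in Df(\xb,\yb)(h,0)$ and $(0,y^*)\in\partial f((\xb,\yb);(h,0,0))$. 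This reproduces the stated estimate, and the inner-semicontinuous and inner-calm refinements follow verbatim from the corresponding parts of Theorem \ref{The : ConstSetForw} (in the calm case the direction $h$ always lies in $\Gamma(x_k-\xb,t_k)$, so the sphere union disappears).

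The step I expect to be delicate is the identification of $N_{\epi\vartheta}(\ldots;(v,\mu))$ with the image-set normal cone $N_{P(\epi f)}(\ldots;(v,\mu))$, because $\vartheta$ need not be globally lsc and $P(\epi f)$ may fail to be closed at boundary points where the infimum is unattained. The saving observation is that every Fr\'echet normal contributing to $\partial\vartheta(\yb;(v,\mu))$ has last coordinate $-1\neq0$, which forces its base point onto the boundary $\alpha_k=\vartheta(y_k)$; along these base points inner semicompactness guarantees $S(y_k)\neq\emptyset$, so they lie in $P(\epi f)$, and there $\hat N_{\epi\vartheta}\subset\hat N_{P(\epi f)}$ since $P(\epi f)\subset\epi\vartheta$ yields $T_{P(\epi f)}\subset T_{\epi\vartheta}$ and hence the reverse inclusion for the polar (Fr\'echet normal) cones. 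An alternative, fully self-contained route avoids this matching altogether: prove the pointwise Fr\'echet rule $(y^*,-1)\in\hat N_{\epi\vartheta}(y,\vartheta(y))$, $\xb'\in S(y)\Rightarrow(0,y^*,-1)\in\hat N_{\epi f}(\xb',y,\vartheta(y))$ directly from the definition of $\hat\partial\vartheta$, and then pass to the directional limit using the $\Gamma(x_k-\xb,t_k)$/$\Gamma^\infty(x_k-\xb,t_k)$ dichotomy of \eqref{eqn : OmegaDef}--\eqref{eqn : OmegaInfDef} exactly as in the proof of Theorem \ref{The : ConstSetForw}.
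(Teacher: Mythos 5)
Your proof follows essentially the same route as the paper's: represent $\epi \vartheta$ as the image of $\epi f$ under the projection $(x,y,\alpha)\mapsto(y,\alpha)$, transfer the inner semicompactness/semicontinuity/calmness of $S$ to the fibre mapping $\Psi(y,\alpha)=P^{-1}(y,\alpha)\cap\epi f$ w.r.t. $\epi\vartheta$, and then apply Theorem \ref{The : ConstSetForw}, using the linearity of the projection to collapse the coderivative conditions to $(0,y^*)\in\partial f((\xb,\yb);(h,v,\mu))$. Your closing discussion of the identification of $\epi\vartheta$ with $P(\epi f)$ at points where the infimum may be unattained is a more careful treatment of a point the paper dispatches in one line (``the assumptions imposed on $S$ imply that $S(y)$ is locally not empty-valued''), so it is a refinement of, not a departure from, the paper's argument.
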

\begin{proof}
  Let $(y^*,-1) \in N_{\epi \vartheta}(\yb,\vartheta(\yb);(v,\mu))$.
  The assumptions imposed on $S$ imply that $S(y)$ is locally not empty-valued (around $\yb$). Hence, we may proceed as in
  \cite[Theorem 10.12]{RoWe98} to obtain $\epi \vartheta = \varphi(\epi f)$ with $\varphi: \R^{n+l+1} \to \R^{l+1}$
  given by $\varphi(x,y,\alpha) = (y,\alpha)$. In order to apply Theorem \ref{The : ConstSetForw} we yet have to show
  that the assumptions on $S$ imply the corresponding assumptions on
  $\Psi(y,\alpha) := \{(x,y,\alpha) \mv (x,y,\alpha) \in \epi f\}$ w.r.t. $\epi \vartheta$.
  Note that if $(y,\alpha) \in \epi \vartheta$ and $x \in S(y)$ we have $\alpha \geq \vartheta(y) = f(x,y)$ and hence
  $\{(x,y,\alpha) \mv (y,\alpha) \in \epi \vartheta, x \in S(y)\} \subset \Psi(y,\alpha)$.

  Consider $(y_k,\alpha_k) \setto{\epi \vartheta} (\yb,\vartheta(\yb))$
  from direction $(v,\mu)$ and thus $y_k \to \yb$ from direction $v$.
  The inner semicompactness of $S$ yields the existence of $\xb$ and a sequence
  $x_k \to \xb$ such that, by passing to a subsequence, we have $x_k \in S(y_k)$
  and hence $(x_k,y_k,\alpha_k) \in \Psi(y_k,\alpha_k)$ with $(x_k,y_k,\alpha_k) \to (\xb,\yb,\vartheta(\yb))$,
  showing the inner semicompactness of $\Psi$ at $(\yb,\vartheta(\yb))$ w.r.t. $\epi \vartheta$ in direction $(v,\mu)$.

  Now fix $\xb \in S(\yb)$. If $S$ is inner semicontinuous at $(\yb,\xb)$ in $v$,
  the inner semicontinuity of $\Psi$ at $((\yb,\vartheta(\yb)),(\xb,\yb,\vartheta(\yb)))$ w.r.t. $\epi \vartheta$ in $(v,\mu)$
  follows from analogous arguments.
  Assuming the inner calmness of $S$, let $\mathcal{V}$ denote the directional neighborhood of $v$
  such that $\xb \in S(y) + L\norm{y - \yb}$ for all $y \in \yb + \mathcal{V}$ and consider a directional
  neighborhood $\mathcal{W}$ of $(v,\mu)$ such that for
  $(y,\alpha) \in \left( (\yb,\vartheta(\yb)) + \mathcal{W} \right) \cap \epi \vartheta$
  we have $y \in \yb + \mathcal{V}$. We obtain that there exists $x \in S(y)$, i.e., $(x,y,\alpha) \in \Psi(y,\alpha)$ such that
  \begin{eqnarray*}
    \norm{(\xb,\yb,\vartheta(\yb)) - (x,y,\alpha)} & \leq & \norm{\xb - x} + \norm{(\yb,\vartheta(\yb)) - (y,\alpha)} \\
    & \leq & L\norm{y - \yb} + \norm{(\yb,\vartheta(\yb)) - (y,\alpha)} \leq (L+1) \norm{(\yb,\vartheta(\yb)) - (y,\alpha)},
  \end{eqnarray*}
  showing the inner calmness of $\Psi$ at $((\yb,\vartheta(\yb)),(\xb,\yb,\vartheta(\yb)))$ w.r.t. $\epi \vartheta$ in $(v,\mu)$.

  Taking into account the differentiability of $\varphi$,
  Theorem \ref{The : ConstSetForw} now yields all statements of the theorem.
\end{proof}

\section{Calculus for directional coderivatives}
In the first part of this section we present two basic calculus rules,
namely the chain rule and the sum rule for directional limiting coderivatives.
In fact, having proved one of them, the other one can be derived relatively easily on the basis of the first one,
similarly like in the case of standard limiting coderivatives.
Here we follow essentially the pattern from \cite{RoWe98}.
Thereafter we present a ``scalarization'' formula which may facilitate
the computation of coderivatives of single-valued Lipschitz continuous mappings.

Consider first the mappings $S_{1}: \mathbb{R}^{n}\rightrightarrows \mathbb{R}^{m},
S_{2}:\mathbb{R}^{m}\rightrightarrows \mathbb{R}^{s}$ and associate with them
the ``intermediate'' multifunction $\Xi: \mathbb{R}^{n} \times \mathbb{R}^{s} \rightrightarrows \mathbb{R}^{m}$ defined by
\[
\Xi(x,u)=\{w\in S_{1}(x)|u \in S_{2}(w)\}.
\]
\begin{theorem}[Directional coderivative chain rule] \label{The : ChainRuleCoder}
Suppose $S=S_{2}\circ S_{1}$ for osc mappings $S_{1},S_{2}$. Let $\bar{x} \in \dom S, \bar{u} \in S(\bar{x})$
and $(h,l)\in \mathbb{R}^{n} \times \mathbb{R}^{s}$  be two given directions. Assume that
\begin{enumerate}
 \item [(a)]
 there is a directional neighborhood $\mathcal{U}$ of $(h,l)$ such that $\Xi((\bar{x},\bar{u})+\mathcal{U})$ is bounded;
 \item [(b)]
 the mapping
 \begin{equation}\label{eq-60}
 F(x,w,u):= \left [
 \begin{array}{l}
 \Gr S_{1}-(x,w)\\
 \Gr S_{2}-(w,u)
 \end{array}
 \right ]
\end{equation}
 is metrically subregular at $(\bar{x},w,\bar{u},0,0)$ for all $w \in \Xi(\bar{x},\bar{u})$
 in  directions $(h,k,l)$ with $k$ such that $(h,k)\in T_{\Gr S_{1}}(\bar{x},w), (k,l)\in T_{\Gr S_{2}}(w,\bar{u})$,
 and in directions $(0,k,0)$ with $k \in \mathbb{S}$ such that $(0,k)\in T_{\Gr S_{1}}(\bar{x},w), (k,0)\in T_{\Gr S_{2}}(w,\bar{u})$;
\end{enumerate}
Then one has
 \begin{equation}\label{incl-1}
\begin{split}
 D^{*}S((\bar{x},\bar{u}); (h,l))\subset
 \bigcup\limits_{ \tilde{w}\in \Xi(\bar{x},\bar{u}) }   &
 \Big( \bigcup\limits_{\scriptstyle k \in \{\xi \in DS_{1}(\bar{x},\tilde{w})(h) | \atop
 \scriptstyle l \in DS_{2}(\tilde{w},\bar{u})(\xi)\} }
  D^{*}S_{1}((\bar{x},\tilde{w}); (h,k))\circ D^{*}S_{2}((\tilde{w},\bar{u});(k,l)) \\
 \cup & \bigcup\limits_{\scriptstyle k \in \{\xi \in \mathbb{S} | DS_{1}(\bar{x},\tilde{w})(0),  \atop
 \scriptstyle 0 \in DS_{2}(\tilde{w},\bar{u})(\xi)\} }
  D^{*}S_{1}((\bar{x},\tilde{w}); (0,k))\circ D^{*}S_{2}((\tilde{w},\bar{u});(k,0)) \Big).
  \end{split}
 \end{equation}
\end{theorem}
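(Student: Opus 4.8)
The plan is to realize $\Gr S$ as the image of an intersection under a linear projection and then to chain together the image rule (Theorem~\ref{The : ConstSetForw}), the intersection rule (Corollary~\ref{Cor : Cap}) and the product rule (Proposition~\ref{Pro : ProductSet}). To this end I would introduce the projection $\varphi(x,w,u):=(x,u)$ together with the closed set $D:=(\Gr S_{1}\times\R^{s})\cap(\R^{n}\times\Gr S_{2})$, so that $\Gr S=\varphi(D)$ and the associated mapping $\Psi(x,u)=\varphi^{-1}(x,u)\cap D$ has values identifiable with $\{(x,w,u)\mv w\in\Xi(x,u)\}$. Outer semicontinuity of $S_{1},S_{2}$ makes $D$ closed, and linearity of $\varphi$ renders it globally Lipschitz, so the directional Lipschitz hypothesis of Theorem~\ref{The : ConstSetForw} holds automatically.

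First I would check inner semicompactness of $\Psi$ at $(\bar x,\bar u)$ w.r.t.\ $\Gr S$ in direction $(h,l)$: by Remark~\ref{Rem : ConstSetForw} this is implied by boundedness of $\Psi$ on a directional neighborhood intersected with $\Gr S$, which is exactly what assumption~(a) delivers through the boundedness of $\Xi$, the cluster point $\tilde w$ landing in $\Xi(\bar x,\bar u)$ by closedness of $D$. Applying Theorem~\ref{The : ConstSetForw} then produces a union over $\tilde w\in\Xi(\bar x,\bar u)$ and over admissible intermediate directions. For the projection $\varphi$ the regular requirement $(h,l)\in D\varphi(\bar x,\tilde w,\bar u)(d)$ forces $d=(h,k,l)$ with $k$ free, whereas the singular ($\Gamma^{\infty}$) branch $d\in D\Psi(\cdot)(0)\cap\mathbb{S}$ forces $d=(0,k,0)$ with $k\in\mathbb{S}$; these yield precisely the two outer unions in \eqref{incl-1}. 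By Remark~\ref{Rem: IdentityMapping} the coderivative of $\varphi$ merely appends a zero $w$-component, so the estimate reduces to requiring $(\xi,0,-\eta)\in N_{D}((\bar x,\tilde w,\bar u);d)$ for the normal $(\xi,-\eta)\in N_{\Gr S}((\bar x,\bar u);(h,l))$.

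Next I would expand $N_{D}$ by Corollary~\ref{Cor : Cap}. The feasibility mapping for the intersection defining $D$ coincides, up to the harmless full-space factors $\R^{s}$ and $\R^{n}$ whose distance contributions vanish, with the mapping $F$ in \eqref{eq-60}; hence assumption~(b) furnishes exactly the directional metric subregularity required, giving $N_{D}(\cdot;d)\subset N_{\Gr S_{1}\times\R^{s}}(\cdot;d)+N_{\R^{n}\times\Gr S_{2}}(\cdot;d)$. Proposition~\ref{Pro : ProductSet} then splits these into $N_{\Gr S_{1}}((\bar x,\tilde w);(h,k))\times\{0\}$ and $\{0\}\times N_{\Gr S_{2}}((\tilde w,\bar u);(k,l))$, using that the directional normal cone of a full space is $\{0\}$. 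Writing the resulting membership as $(\xi,0,-\eta)=(\xi,b,0)+(0,-b,-\eta)$ and putting $\zeta:=-b$ converts the two factors into $\xi\in D^{*}S_{1}((\bar x,\tilde w);(h,k))(\zeta)$ and $\zeta\in D^{*}S_{2}((\tilde w,\bar u);(k,l))(\eta)$, that is $\xi\in\big(D^{*}S_{1}((\bar x,\tilde w);(h,k))\circ D^{*}S_{2}((\tilde w,\bar u);(k,l))\big)(\eta)$. The nonemptiness of the directional normal cones forces $(h,k)\in T_{\Gr S_{1}}$ and $(k,l)\in T_{\Gr S_{2}}$, i.e.\ $k\in DS_{1}(\bar x,\tilde w)(h)$ and $l\in DS_{2}(\tilde w,\bar u)(k)$, which are exactly the index sets governing the unions; the singular directions $(0,k,0)$ are handled identically.

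The main difficulty I expect lies in the bookkeeping rather than in any single estimate: one must track the intermediate direction $k$ faithfully through the three successive reductions, match the $\Gamma^{\infty}$ case of the image rule to the $(0,k,0)$, $k\in\mathbb{S}$ branch of \eqref{incl-1}, and verify carefully that padding the feasibility mapping by the full-space factors leaves directional metric subregularity---and thus the equivalence between assumption~(b) and the hypothesis of Corollary~\ref{Cor : Cap}---unaffected. Once these identifications are pinned down, the conclusion follows by purely mechanical composition of the already-established normal-cone calculus rules.
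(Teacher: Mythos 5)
Your proposal is correct and follows essentially the same route as the paper: the paper likewise writes $\Gr S$ as the image under the projection $(x,w,u)\mapsto(x,u)$ of the intermediate set $C=\{(x,w,u)\mv (x,w)\in\Gr S_{1},\,(w,u)\in\Gr S_{2}\}$, invokes Theorem~\ref{The : ConstSetForw} via assumption (a) (with Remark~\ref{Rem : ConstSetForw}), and then splits $N_{C}$ into a sum of normals to the two graphs before the same final bookkeeping with $u^{*}=-y_{2}^{*}$. The only cosmetic difference is that the paper obtains this splitting from Theorem~\ref{The : ConstSet} applied to the duplication map $H(x,w,u)=(x,w,w,u)$ with $Q=\Gr S_{1}\times\Gr S_{2}$, whose feasibility mapping is literally \eqref{eq-60}, whereas you reach the identical estimate through Corollary~\ref{Cor : Cap} on the intersection $(\Gr S_{1}\times\R^{s})\cap(\R^{n}\times\Gr S_{2})$ --- itself a consequence of Theorem~\ref{The : ConstSet} --- correctly noting that the full-space factors contribute nothing to the distances, so assumption (b) supplies exactly the required directional metric subregularity.
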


\proof
Following the proof idea of \cite[Thm.10.37]{RoWe98} one has that $\Gr S = G(C)$
with $G:(x,w,u)\mapsto (x,u)$ and $C=H^{-1}(D)$, where $H:(x,w,u)\mapsto (x,w,w,u)$ and $D=\Gr S_{1} \times \Gr S_{2}$.

To compute an estimate of $N_{\Gr S}((\bar{x},\bar{u});(h,l))$, we invoke first Theorem 3.2,
which is possible thanks to condition (a), see also Remark \ref{Rem : ConstSetForw}. We obtain that
  \begin{equation}\label{eq-101}
 \begin{split}
 N_{\Gr S}((\bar{x},\bar{u});(h,l)) \subset
 \bigcup\limits_{ \tilde{w}\in \Xi(\bar{x},\bar{u})}&
 \Big(\bigcup\limits_{k \in \{\xi |(h,\xi,l)\in T_{C}(\bar{x},\tilde{w},\bar{u})\}}
 \{(y^{*}_{1}, y^{*}_{2})|(y^{*}_{1},0,y^{*}_{2})\in N_{C}((\bar{x},\tilde{w},\bar{u});(h,k,l))\}\\
 \cup & \bigcup\limits_{k \in \{\xi \in \mathbb{S}|(0,\xi,0)\in T_{C}(\bar{x},\tilde{w},\bar{u})\}}
 \{(y^{*}_{1}, y^{*}_{2})|(y^{*}_{1},0,y^{*}_{2})\in N_{C}((\bar{x},\tilde{w},\bar{u});(0,k,0))\} \Big).
 \end{split}
  \end{equation}

  Next we compute $N_{C}((\bar{x},\tilde{w},\bar{u});(h,k,l))$ via Theorem \ref{The : ConstSet}.
  Thanks to condition (b) and the calmness of $H$ one has
  \begin{equation}\label{eq-102}
  \begin{split}
 &  N_{C}((\bar{x},\tilde{w}, \bar{u}); (h,k,l)) \subset\\
& \{(a,b+c,d)|(a,b)\in N_{\Gr S_{1}}((\bar{x},\tilde{w});(h,k)),(c,d)\in N_{\Gr S_{2}}((\tilde{w},\bar{u}); (k,l))\}
 \end{split}
  \end{equation}
  and, likewise,
\begin{equation}\label{eq-202}
  \begin{split}
 &  N_{C}((\bar{x},\tilde{w}, \bar{u}); (0,k,0)) \subset\\
& \{(a,b+c,d)|(a,b)\in N_{\Gr S_{1}}((\bar{x},\tilde{w});(0,k)),(c,d)\in N_{\Gr S_{2}}((\tilde{w},\bar{u}); (k,0))\}.
 \end{split}
  \end{equation}
Further we observe that
$$
 T_{C}(\bar{x},\tilde{w}, \bar{u}) \subset \{(h,k,l)|(h,k)\in T_{\Gr S_{1}}  (\bar{x},\tilde{w}),(k,l) \in
 T_{\Gr S_{2}}  (\tilde{w},\bar{u})\},
$$
so that the first union in (\ref{eq-101}) with respect to $k$ can be  taken over the set
\begin{equation}\label{eq-61}
k \in \{\xi | (h,\xi)\in T_{\Gr S_{1}} (\bar{x},\tilde{w}),(\xi, l)\in T_{\Gr S_{2}} (\tilde{w},\bar{u})\}
=: \mathcal{T}_{h:l},
\end{equation}
and the second union in (\ref{eq-101}) with respect to $k$ can be taken over the set
\begin{equation}\label{eq-62}
k \in \{\xi \in\mathbb{S} | (0,\xi )\in T_{\Gr S_{1}} (\bar{x},\tilde{w}),(\xi, 0)\in T_{\Gr S_{2}} (\tilde{w},\bar{u})\}
=: \mathcal{T}^{\mathbb{S}}_{0:0}.
\end{equation}
Using consecutively representations (\ref{eq-61}), (\ref{eq-62}) and inclusions (\ref{eq-102}), (\ref{eq-202}) we obtain that
\begin{eqnarray*}
 \lefteqn{N_{\Gr S}((\bar{x},\bar{u}); (h,l))}&&\\
  &\subset& \bigcup\limits_{\tilde{w}\in \Xi(\bar{x},\bar{u})} \Big\{(y^{*}_{1},y^{*}_{2})~|~ (y^{*}_{1},0,y^{*}_{2})\in
 \bigcup\limits_{k \in \mathcal{T}_{h:l}}
 N_{C}((\bar{x},\tilde{w},\bar{u});(h,k,l)) \cup
  \bigcup\limits_{k \in \mathcal{T}^{\mathbb{S}}_{0:0}}
 N_{C}((\bar{x},\tilde{w},\bar{u});(0,k,0)) \Big\}\\
 &\subset& \bigcup\limits_{\tilde{w} \in \Xi(\bar{x},\bar{u})}
 \Big( \bigcup\limits_{k \in \mathcal{T}_{h:l}}
  \{(y^{*}_{1},y^{*}_{2})| \exists c: (y^{*}_{1},-c)\in N_{\Gr S_{1}}((\bar{x},\tilde{w});(h,k)),
  (c,y^{*}_{2})\in N_{\Gr S_{2}}((\tilde{w},\bar{u}); (k,l))\}\\
  && \Big. \hspace{1cm} \cup
 \bigcup\limits_{k \in \mathcal{T}^{\mathbb{S}}_{0:0}}
  \{(y^{*}_{1},y^{*}_{2})| \exists c: (y^{*}_{1},-c)\in N_{\Gr S_{1}}((\bar{x},\tilde{w}); (0,k)),
  (c,y^{*}_{2})\in N_{\Gr S_{2}}((\tilde{w},\bar{u}); (k,0))\} \Big).
\end{eqnarray*}

It follows that for $u^{*}:=-y^{*}_{2}$ one has
   \[
   \begin{split}
 D^{*}S((\bar{x},\bar{u}); (h,l))(u^{*})\subset
 \bigcup\limits_{ \tilde{w}\in \Xi(\bar{x},\bar{u}) } &
 \Big( \bigcup\limits_{k \in \mathcal{T}_{h:l}}
  D^{*}S_{1}((\bar{x},\tilde{w}); (h,k))\circ D^{*}S_{2}((\tilde{w},\bar{u});(k,l))(u^{*})~ \\
 \cup & \bigcup\limits_{k \in \mathcal{T}^{\mathbb{S}}_{0:0}}
  D^{*}S_{1}((\bar{x},\tilde{w}); (0,k))\circ D^{*}S_{2}((\tilde{w},\bar{u});(k,0))(u^{*}) \Big),
  \end{split}
 \]
  and the proof is complete.
\endproof

Let us comment on assumption (b) which is, admittedly, not easy to verify in general.
Following  Proposition \ref{Pro : ProductSet}, it may be ensured by the next two conditions
(which are, however, more restrictive).
\begin{enumerate}
 \item [(i)]
 For all $\tilde{w} \in \Xi (\bar{x},\bar{u})$ and all directions $k$ such that
 $k \in DS_{1}(\bar{x},\tilde{w})(h), l \in DS_{2}(\tilde{w},\bar{u})(k)$ one has
 \begin{equation}\label{eq-100}
0 \in D^{*}S_{1}((\bar{x},\tilde{w});(h,k))(-\lambda), \
\lambda \in D^{*}S_{2}((\tilde{w},\bar{u});(k,l))(0) \quad
\Longrightarrow \quad \lambda = 0;
 \end{equation}
  \item [(ii)]
  for all $\tilde{w}\in \Xi(\bar{x},\bar{u})$ and all directions $k \in \mathbb{S}$ such that
  $k \in DS_{1}(\bar{x},\tilde{w})(0), 0 \in DS_{2}(\tilde{w},\bar{u})(k)$ one has
  \begin{equation}\label{eq-1101}
0 \in D^{*}S_{1}((\bar{x},\tilde{w});(0,k))(-\lambda), \
\lambda \in D^{*}S_{2}((\tilde{w},\bar{u});(k,0))(0) \quad
\Longrightarrow \quad \lambda = 0.
 \end{equation}
\end{enumerate}
We observe that both conditions (\ref{eq-100}), (\ref{eq-1101}) are automatically fulfilled
provided either $S_{1}$ is metrically regular around $(\bar{x},\tilde{w})$ for $\tilde{w} \in \Xi (\bar{x},\bar{u})$,
or $S_{2}$ has the Aubin property around $(\tilde{w},\bar{u})$ for $\tilde{w} \in \Xi (\bar{x},\bar{u})$.
This complies with the corresponding conditions in \cite[Theorem 10.37]{RoWe98}.
More precisely, one can employ the characterizations of the directional metric regularity and the Aubin property
from \cite[Theorem 1]{GfrKl16}.

Further note that in inclusion (\ref{incl-1}) the union over
$k\in \{\xi \in \mathbb{S}| \xi \in DS_{1}(\bar{x},\tilde{w})(0), 0 \in DS_{2}(\tilde{w},\bar{u})(\xi)\}$
vanishes provided we strengthen assumption (a) by asking that the intermediate mapping $\Xi$
is inner calm at $(\bar{x},\bar{u},\tilde{w})$ (w.r.t. $\Gr S$) in direction $(h,l)$.

On the basis of the above considerations we obtain immediately the following corollaries of Theorem \ref{The : ChainRuleCoder}.
\begin{corollary}\label{Cor : ChR1}
In the framework of Theorem \ref{The : ChainRuleCoder} let $S_{1}$ be single-valued and Lipschitz continuous near $\bar{x}$.
Further suppose that multifunction \eqref{eq-60} is metrically subregular at $(\bar{x},S_{1}(\bar{x}), \bar{u}, 0, 0)$
in directions $(h,k,l)$ with $k \in D S_{1}(\bar{x})(h)$ such that $(k,l)\in T_{\Gr S_{2}} (S_{1}(\bar{x}), \bar{u})$.
 Then inclusion (\ref{incl-1}) attains the form
\begin{equation}\label{eq-300}
D^{*}S((\bar{x},\bar{u}); (h,l))\subset
\bigcup\limits_{\scriptstyle k \in \{\xi \in DS_{1}(\bar{x})(h) | \atop
 \scriptstyle l \in DS_{2}(S_{1}(\bar{x}),\bar{u})(\xi)\} }
  D^{*}S_{1}(\bar{x}; (h,k))\circ D^{*}S_{2}((S_{1}(\bar{x}),\bar{u});(k,l)).
\end{equation}
If, moreover, $S_{1}$ is directionally differentiable at $\bar{x}$, then
\begin{equation}\label{eq-301}
D^{*}S((\bar{x},\bar{u}); (h,l))\subset D^{*}S_{1}(\bar{x}; (h,k))\circ D^{*}S_{2}((S_{1}(\bar{x}),\bar{u});(k,l)),
\end{equation}
where $k=S^{\prime}_{1}(\bar{x};h)$.
\end{corollary}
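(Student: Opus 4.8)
The plan is to obtain both inclusions as direct specializations of Theorem \ref{The : ChainRuleCoder}, using the single-valuedness and Lipschitz continuity of $S_{1}$ to trivialize the two features that make the general statement cumbersome: the union over $\tilde w \in \Xi(\bar x, \bar u)$ and the secondary $\mathbb{S}$-union. First I would note that, $S_{1}$ being single-valued, for every $(x,u) \in \Gr S$ one has $\Xi(x,u) = \{S_{1}(x)\}$; in particular $\Xi(\bar x, \bar u) = \{S_{1}(\bar x)\}$, so the outer union in (\ref{incl-1}) collapses to the single index $\tilde w = S_{1}(\bar x)$ and $DS_{1}(\bar x, \tilde w) = DS_{1}(\bar x)$.

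Next I would verify that $\Xi$ is inner calm at $(\bar x, \bar u, S_{1}(\bar x))$ w.r.t. $\Gr S$ in direction $(h,l)$. Indeed, for $(x,u) \in \Gr S$ close to $(\bar x, \bar u)$ one has $S_{1}(x) \in \Xi(x,u)$, and the Lipschitz estimate gives $\norm{S_{1}(x) - S_{1}(\bar x)} \leq L \norm{x - \bar x} \leq L \norm{(x,u) - (\bar x, \bar u)}$, so the inner-calmness bound holds with modulus $L$. The same estimate shows $\Xi$ is bounded on an entire directional neighborhood of $(h,l)$, establishing assumption (a); by the observation preceding the corollary, the inner calmness of $\Xi$ then renders the second ($\mathbb{S}$-indexed) union in (\ref{incl-1}) superfluous. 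Hence only condition (b) for the directions $(h,k,l)$ with $k \in DS_{1}(\bar x)(h)$ and $(k,l) \in T_{\Gr S_{2}}(S_{1}(\bar x), \bar u)$ remains to be assumed---precisely the metric subregularity hypothesis of the corollary---and applying Theorem \ref{The : ChainRuleCoder} yields (\ref{eq-300}).

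For the second assertion I would show that a single-valued Lipschitz map which is directionally differentiable at $\bar x$ has a single-valued graphical derivative $DS_{1}(\bar x)(h) = \{S_{1}^{\prime}(\bar x; h)\}$. Any $(h,v) \in T_{\Gr S_{1}}(\bar x, S_{1}(\bar x))$ arises from $t_{k} \downarrow 0$, $h_{k} \to h$ with $v = \lim_{k} (S_{1}(\bar x + t_{k} h_{k}) - S_{1}(\bar x))/t_{k}$; the Lipschitz bound $\norm{S_{1}(\bar x + t_{k} h_{k}) - S_{1}(\bar x + t_{k} h)}/t_{k} \leq L \norm{h_{k} - h} \to 0$ permits replacing $h_{k}$ by $h$, forcing $v = S_{1}^{\prime}(\bar x; h)$. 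Substituting this into (\ref{eq-300}) collapses the surviving union to the single index $k = S_{1}^{\prime}(\bar x; h)$, which is (\ref{eq-301}); should $l \notin DS_{2}(S_{1}(\bar x), \bar u)(S_{1}^{\prime}(\bar x; h))$ hold, the index set is empty and both sides reduce to the empty set, so the inclusion is trivially valid.

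The step I expect to require the most care is confirming that the corollary's streamlined subregularity hypothesis is exactly the residual instance of condition (b) left after inner calmness has eliminated the $\mathbb{S}$-union; the remaining manipulations are routine specializations of the general chain rule together with the elementary fact about graphical derivatives of directionally differentiable Lipschitz maps.
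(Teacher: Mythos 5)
Your proof is correct and follows essentially the same route as the paper: the paper's (very terse) justification is precisely that single-valuedness and Lipschitz continuity of $S_1$ carry over to $\Xi$, which yields assumption (a), the inner calmness of $\Xi$ that kills the $\mathbb{S}$-indexed union (and with it the need for condition (b) in the directions $(0,k,0)$), and the collapse of the union over $\tilde w$. Your additional details—the explicit inner-calmness estimate with modulus $L$ and the verification that $DS_1(\bar x)(h)=\{S_1'(\bar x;h)\}$ for Lipschitz directionally differentiable maps—are exactly the computations the paper leaves to the reader.
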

Note that the single-valuedness and the Lipschitz continuity of $S_1$ are carried over to the intermediate mapping $\Xi$,
yielding the fulfillment of assumption (a) as well as the reduction in the estimate.
It is not difficult to verify that the properties of $S_1$ enable us to simplify the
multifunction \eqref{eq-60} by replacing its first row by $S_{1}(x)- w$.
We make use of this fact in the formulation of Theorem \ref{The : SumRuleCoder}.

\begin{corollary}\label{Cor : ChR2}
In the framework of Theorem \ref{The : ChainRuleCoder} let $S_{2}$ be single-valued and Lipschitz continuous near
every $w\in S_{1}(\bar{x})$.
Further, let assumption (a) be fulfilled. Then inclusion \eqref{incl-1} with evident simplifications holds true.
\end{corollary}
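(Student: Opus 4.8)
The plan is to deduce Corollary~\ref{Cor : ChR2} from Theorem~\ref{The : ChainRuleCoder} by showing that its hypotheses make assumption~(b) redundant. Since assumption~(a) is imposed directly, the whole task reduces to verifying the directional metric subregularity of the mapping $F$ in \eqref{eq-60} at $(\bar x,w,\bar u,0,0)$ for every $w\in\Xi(\bar x,\bar u)$ and the two families of directions described in~(b). Once this is in place, \eqref{incl-1} follows verbatim, and the announced ``evident simplifications'' are only a matter of rewriting the $S_2$-factors in single-valued form.

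First I would appeal to the discussion following Theorem~\ref{The : ChainRuleCoder}, where it is recorded that the pointwise conditions \eqref{eq-100} and \eqref{eq-1101} imply assumption~(b) via Proposition~\ref{Pro : ProductSet}, and that both of them hold automatically as soon as $S_2$ has the Aubin property around $(\tilde w,\bar u)$ for every $\tilde w\in\Xi(\bar x,\bar u)$. Thus it suffices to establish this Aubin property; concretely, I only need that $\lambda\in D^*S_2((\tilde w,\bar u);(k,l))(0)$ (respectively $\lambda\in D^*S_2((\tilde w,\bar u);(k,0))(0)$) forces $\lambda=0$, which then annihilates the second hypothesis in \eqref{eq-100} and \eqref{eq-1101}.

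The key step is that single-valuedness together with local Lipschitz continuity of $S_2$ supplies exactly this property. Denoting the single value of $S_2$ by $s_2$, so that $\bar u=s_2(\tilde w)$, the Mordukhovich criterion gives $D^*S_2(\tilde w,\bar u)(0)=\{0\}$ for a locally Lipschitz $s_2$ (equivalently, its directional form can be read off from \cite[Theorem~1]{GfrKl16}). Because the directional limiting normal cone is always contained in its non-directional counterpart, $N_{\Gr S_2}((\tilde w,\bar u);(k,l))\subset N_{\Gr S_2}(\tilde w,\bar u)$, whence $D^*S_2((\tilde w,\bar u);(k,l))(0)\subset D^*S_2(\tilde w,\bar u)(0)=\{0\}$, and likewise for the directions $(k,0)$. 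This makes \eqref{eq-100} and \eqref{eq-1101} hold trivially, so assumption~(b) is satisfied and Theorem~\ref{The : ChainRuleCoder} yields \eqref{incl-1}.

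I expect the only delicate points to be bookkeeping rather than substance. One must check that $\Xi(\bar x,\bar u)\subset S_1(\bar x)$, so that the Lipschitz continuity of $S_2$ assumed near every $w\in S_1(\bar x)$ indeed covers every $\tilde w$ appearing in the union, and one must resist the temptation to drop the second (spherical) union: unlike in Corollary~\ref{Cor : ChR1}, single-valuedness of $S_2$ does not render $\Xi$ single-valued, so $\Xi$ need not be inner calm and that union must be retained. The evident simplifications therefore consist solely in substituting $\bar u=s_2(\tilde w)$ and expressing the admissible directions through $l\in Ds_2(\tilde w)(k)$ (respectively $0\in Ds_2(\tilde w)(k)$), the union over $\tilde w\in\Xi(\bar x,\bar u)$ and the two inner unions being otherwise unchanged.
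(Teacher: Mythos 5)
Your proof is correct and takes essentially the same route as the paper: the paper's entire justification is that single-valuedness plus Lipschitz continuity of $S_2$ means $S_2$ has the Aubin property around $(\tilde w,\bar u)$ for every $\tilde w\in\Xi(\bar x,\bar u)$, which by the discussion following Theorem \ref{The : ChainRuleCoder} makes conditions \eqref{eq-100} and \eqref{eq-1101}, and hence assumption (b), automatic. Your extra details (the coderivative verification via $D^*S_2((\tilde w,\bar u);(k,l))(0)\subset D^*S_2(\tilde w,\bar u)(0)=\{0\}$, the check that $\Xi(\bar x,\bar u)\subset S_1(\bar x)$, and the observation that the spherical union must be retained because $\Xi$ need not be inner calm) simply make explicit what the paper leaves implicit.
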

The assumptions imposed on $S_2$ justify assumption (b), since for a single-valued mapping Lipschitz continuity and
the Aubin property coincide.

Corollaries \ref{Cor : ChR1} and \ref{Cor : ChR2} represent our main tool in the proof of the next statement.
We consider there mappings $S_{i}:\mathbb{R}^{n} \rightrightarrows \mathbb{R}^{m}, i=1,2,\ldots,p$,
and associate with them the multifunction $\Xi: \mathbb{R}^{n} \times \mathbb{R}^{m} \rightrightarrows (\mathbb{R}^{m})^{p}$
defined by
\[
\Xi(x,u)
= \{w=(w_{1},w_{2},\ldots,w_{p}) \in (\mathbb{R}^{m})^{p} |  w_{i} \in S_{i}(x), \sum w_{i}=u\}.
\]
\begin{theorem}[Directional coderivative sum rule] \label{The : SumRuleCoder}
Suppose $S=S_{1}+S_{2}+ \ldots + S_{p}$ for osc mappings $S_{i}:\mathbb{R}^{n} \rightrightarrows \mathbb{R}^{m} $.
Let $\bar{x}\in \dom S, \bar{u} \in S(\bar{x})$ and $(h,l)\in \mathbb{R}^{n} \times \mathbb{R}^{m}$ be a given pair of directions.
Further assume that
\begin{enumerate}
 \item [(a)]
 there is a directional neighborhood $\mathcal{U}$ of $(h,l)$ such that $\Xi((\bar{x},\bar{u})+\mathcal{U})$ is bounded;
 \item [(b)]
 the mapping
\end{enumerate}
\[
F (x,v,w)=
\left[ \begin{split}
& (x,\ldots,x) - (v_{1},\ldots,v_{p})\\
& \Gr S_{1} - (v_{1},w_{1})\\
& \vdots\\
& \Gr S_{p} - (v_{p},w_{p})
\end{split}\right]
\]
is metrically subregular at $(\bar{x},(\bar{x}, \ldots, \bar{x}), \tilde{w}, 0,0,\ldots, 0)$
for all vectors $\tilde{w} \in \Xi(\bar{x},\bar{u})$ in all directions $(h, (h, \ldots, h),k_{1}, \ldots, k_{p})$ such that
\begin{center}
  $k_{i} \in DS_{i}(\bar{x})(h), i=1,\ldots, p, \, k_{1} + \ldots + k_p=l$,
\end{center}
and in all directions $(0, (0, \ldots, 0),k_{1}, \ldots, k_{p})$ such that
\begin{center}
 $k_{i}\in DS_{i}(\bar{x})(0), i=1,\ldots, p, \, k_{1} + \ldots + k_p=0$ and $k := (k_{1}, \ldots, k_{p}) \in \mathbb{S}$.
\end{center}
Then one has
\begin{equation}\label{eq-2101}
   \begin{split}
D^{*}S((\bar{x},\bar{u}); (h,l))\subset
 & \bigcup\limits_{ \tilde{w} \in \Xi(\bar{x},\bar{u}) }
 \Big( \bigcup\limits_{\scriptstyle  k_{i}\in DS_{i}(\bar{x},\tilde{w}_{i})(h), \atop
 \scriptstyle  \sum k_{i}= l }
  D^{*}S_{1}((\bar{x},\tilde{w}_{1}); (h,k_{1}))+ \ldots +   D^{*}S_{p}((\bar{x}, \tilde{w}_{p}); (h,k_{p})) \\
 &  \hspace{1.3cm} \cup \bigcup\limits_{\scriptstyle k_{i} \in DS_{i}(\bar{x},\tilde{w}_{i})(0), \atop
 \scriptstyle \sum k_{i}=0, k \in \mathbb{S} }
  D^{*}S_{1}((\bar{x},\tilde{w}_{1}); (0,k_{1}))+ \ldots +   D^{*}S_{p}((\bar{x}, \tilde{w}_{p}); (0,k_{p})) \Big).
  \end{split}
 \end{equation}
\end{theorem}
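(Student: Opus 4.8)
The plan is to realize $S$ as a twofold composition of the product mapping $\mathcal S := S_1 \times \cdots \times S_p$ with two linear single-valued maps and to invoke the chain rule Corollaries \ref{Cor : ChR1} and \ref{Cor : ChR2}. Introduce the diagonal embedding $A : \R^n \to (\R^n)^p$, $A(x) := (x, \ldots, x)$, the product mapping $\mathcal S : (\R^n)^p \rightrightarrows (\R^m)^p$, $\mathcal S(v_1, \ldots, v_p) := S_1(v_1) \times \cdots \times S_p(v_p)$, and the summation map $\Sigma : (\R^m)^p \to \R^m$, $\Sigma(w_1, \ldots, w_p) := \sum_{i=1}^p w_i$. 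Then $S = \Sigma \circ \mathcal S \circ A$, and writing $T := \mathcal S \circ A$ one has $T(x) = \prod_{i=1}^p S_i(x)$ and $S = \Sigma \circ T$. The intermediate mapping of this outer composition, $\{w \in T(x) \mv u = \Sigma(w)\}$, coincides with $\Xi$, so assumption (a) is exactly the boundedness hypothesis needed to run the chain rule for $S = \Sigma \circ T$.

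First I would treat the outer composition $S = \Sigma \circ T$. As $\Sigma$ is linear, hence single-valued and Lipschitz, Corollary \ref{Cor : ChR2} applies under assumption (a) alone and yields inclusion \eqref{incl-1} (with the evident simplifications), in which $D^*\Sigma$ is evaluated by Remark \ref{Rem: IdentityMapping}: since $\nabla\Sigma$ is the row of identities, $D^*\Sigma((\tilde w, \bar u); (k, l))(u^*) = \{(u^*, \ldots, u^*)\}$ whenever $l = \sum_i k_i$ and is void otherwise. This produces, for each $\tilde w = (\tilde w_1, \ldots, \tilde w_p) \in \Xi(\bar x, \bar u)$, a union over intermediate directions $k = (k_1, \ldots, k_p)$ feeding the vector $(u^*, \ldots, u^*)$ into $D^*T((\bar x, \tilde w); (h, k))$ on the regular branch ($\sum_i k_i = l$) and into $D^*T((\bar x, \tilde w); (0, k))$ on the sphere branch ($\sum_i k_i = 0$, $k \in \mathbb{S}$).

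Next I would resolve $T = \mathcal S \circ A$ for each such $\tilde w$. Here $A$ is linear, hence single-valued, Lipschitz and directionally differentiable with $A'(\bar x; h) = (h, \ldots, h)$; moreover the feasibility mapping \eqref{eq-60} of this composition, after replacing its first block $\Gr A - (x, v)$ by $A(x) - v$, is exactly the mapping $F$ of assumption (b). Hence (b) provides precisely the subregularity required by Corollary \ref{Cor : ChR1}, both in the directions $(h, (h, \ldots, h), k)$ with $k_i \in DS_i(\bar x, \tilde w_i)(h)$, $\sum_i k_i = l$, coming from the regular branch, and in the directions $(0, (0, \ldots, 0), k)$ with $k_i \in DS_i(\bar x, \tilde w_i)(0)$, $\sum_i k_i = 0$, $k \in \mathbb{S}$, coming from the sphere branch. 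Because $A$ is single-valued and Lipschitz, Corollary \ref{Cor : ChR1} carries no sphere term of its own and, by directional differentiability, its form \eqref{eq-301} pins the intermediate direction to $(h, \ldots, h)$, respectively $(0, \ldots, 0)$, giving $D^*T((\bar x, \tilde w); (h, k)) \subset D^*A(\bar x; (h, (h, \ldots, h))) \circ D^*\mathcal S((A(\bar x), \tilde w); ((h, \ldots, h), k))$. Since $\Gr \mathcal S$ is, up to the fixed coordinate permutation $((x_1, \ldots, x_p), (w_1, \ldots, w_p)) \mapsto ((x_1, w_1), \ldots, (x_p, w_p))$, the product $\prod_{i=1}^p \Gr S_i$, Proposition \ref{Pro : ProductSet} gives $D^*\mathcal S((A(\bar x), \tilde w); ((h, \ldots, h), k))(u^*, \ldots, u^*) \subset \prod_{i=1}^p D^*S_i((\bar x, \tilde w_i); (h, k_i))(u^*)$, while Remark \ref{Rem: IdentityMapping} gives $D^*A(\bar x; (h, (h, \ldots, h)))(\zeta_1, \ldots, \zeta_p) = \sum_{i=1}^p \zeta_i$.

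Chaining the three coderivatives turns the input $u^*$ into $\sum_{i=1}^p \zeta_i$ with $\zeta_i \in D^*S_i((\bar x, \tilde w_i); (h, k_i))(u^*)$, which is exactly the regular union in \eqref{eq-2101}; the sphere branch reproduces the second union verbatim, its only source being the outer map $\Sigma$. I expect the main difficulty to be organizational rather than conceptual: one must verify that assumption (a) is literally assumption (a) of the chain rule for $S = \Sigma \circ T$ (true since both intermediate mappings equal $\Xi$), that restricting the directions in (b) to those with $\sum_i k_i = l$, respectively $\sum_i k_i = 0$, captures exactly the intermediate directions that survive the outer step, and that the harmless coordinate permutation identifying $\Gr \mathcal S$ with $\prod_i \Gr S_i$ leaves the normal-cone and coderivative computations intact. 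Tracking the single covector $u^* = -y_2^*$ consistently through both applications, as in the proof of Theorem \ref{The : ChainRuleCoder}, then completes the argument.
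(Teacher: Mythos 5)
Your proposal is correct and takes essentially the same route as the paper: the paper's proof also writes $S=F_{2}\circ S_{o}\circ F_{1}$ (your $\Sigma\circ\mathcal S\circ A$), first applies Corollary \ref{Cor : ChR2} to the outer composition $F_{2}\circ G$ with $G=S_{o}\circ F_{1}$ under assumption (a), and then Corollary \ref{Cor : ChR1} to $G$ under assumption (b), noting as you do that the first block of \eqref{eq-60} may be replaced by $A(x)-v$. Your write-up merely makes explicit the routine evaluations the paper leaves to the reader, namely $D^{*}\Sigma$ and $D^{*}A$ via Remark \ref{Rem: IdentityMapping} and the product estimate for $\Gr\mathcal S$ via Proposition \ref{Pro : ProductSet}.
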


\proof
Following \cite[Theorem 10.41]{RoWe98}, we observe that $S=F_{2}\circ S_{o} \circ F_{1}$ for $S_{o}:
(x_{1},x_{2},\ldots,x_{p}) \mapsto S_{1}(x_{1}) \times \ldots \times S_{p}(x_{p}),
F_{1}: x \mapsto (x, \ldots, x)$($p$ copies) and $F_{2}:(u_{1}, \ldots, u_{p})\mapsto u_{1}+ \ldots + u_{p}$.
So, it suffices to apply first Corollary \ref{Cor : ChR2} to the composition $F_{2} \circ G$ for $G=S_{o} \circ F_{1}$
(which is possible under condition (a)) and, thereafter, Corollary \ref{Cor : ChR1} to compute the directional limiting
coderivative of $G$. This can be done under assumption (b) and leads directly to formula \eqref{eq-2101}.
\endproof

Similarly as in Theorem \ref{The : ChainRuleCoder}, the union over
$
\{k \in \mathbb{S} \mv k_{1} + \ldots + k_p =0, k_{i} \in D S_{i}(\bar{x},\tilde{w}_{i}) (0)\}
$
vanishes provided we strengthen assumption (a) by asking that the intermediate mapping $\Xi$
is inner calm at $(\bar{x},\bar{u},\tilde{w})$ (w.r.t. $\Gr S$) in direction $(h,l)$.

Condition (b) can be ensured by the  assumptions (i),(ii) below, which follow from  implications (\ref{eq-100}), (\ref{eq-1101})
 applied to the composition $S_{o} \circ F_{1}$. They attain the form:
 \begin{enumerate}
  \item [(i)]
  For all $\tilde{w} \in \Xi (\bar{x},\bar{u})$ and all directions $k_{1}, \ldots, k_{p}$ such that
  $k_{i}\in DS_{i}(\bar{x},\tilde{w}_{i})(h), i=1, \ldots, p, k_{1}+\ldots+k_p=l $, one has
  \[
  v^{*}_{i}\in D^{*}S_{i}((\bar{x},\tilde{w}_{i});(h,k_{i}))(0), \ \sum v^{*}_{i}=0 \quad
  \Longrightarrow \quad v^{*}_{i}=0 \mbox{ for } i=1,\ldots,p.
  \]
   \item [(ii)]
  For all $\tilde{w} \in \Xi (\bar{x},\bar{u})$ and all directions $k_{1}, \ldots, k_{p}$ such that
  $k_{i}\in DS_{i}(\bar{x},\tilde{w}_{i})(0), i=1, \ldots, p, k_{1}+\ldots+k_p=0, k \in \mathbb{S}$, one has
  \[
 v^{*}_{i}\in D^{*}S_{i}((\bar{x},\tilde{w}_{i});(0,k_{i}))(0), \ \sum v^{*}_{i}=0 \quad
  \Longrightarrow \quad v^{*}_{i}=0 \mbox{ for } i=1,\ldots,p.
  \]
 \end{enumerate}
 These conditions represent a directional version of \cite[Theorem 10.41, condition (b)]{RoWe98}.

\begin{corollary}
In the setting of Theorem \ref{The : SumRuleCoder} assume that $p=2$ and $S_{1}$ is single-valued and
Lipschitz continuous near $\bar{x}$ and directionally differentiable at $\bar{x}$.
Then all assumptions of Theorem \ref{The : SumRuleCoder} are fulfilled and
\begin{equation}\label{eq-103}
D^{*}S((\bar{x},\bar{u});(h,l)) \subset
D^{*}S_{1}(\bar{x};(h,k)) +
D^{*}S_{2}((\bar{x},\bar{u}-S_{1}(\bar{x}));(h,l-k)),
\end{equation}
where $k=S_{1}(\bar{x};h)$.
\end{corollary}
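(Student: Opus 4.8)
The plan is to verify the two hypotheses (a), (b) of Theorem~\ref{The : SumRuleCoder} and then to simplify the right-hand side of \eqref{eq-2101}. First I would record the structure of the intermediate mapping for $p=2$,
\[
\Xi(x,u)=\{(w_1,w_2)\mid w_1\in S_1(x),\ w_2\in S_2(x),\ w_1+w_2=u\}.
\]
Single-valuedness of $S_1$ forces $w_1=S_1(x)$ and hence $w_2=u-S_1(x)$, so $\Xi(x,u)$ is the singleton $\{(S_1(x),u-S_1(x))\}$ when $u\in S(x)$ and empty otherwise; in particular $\Xi(\bar{x},\bar{u})=\{\tilde{w}\}$ with $\tilde{w}=(S_1(\bar{x}),\bar{u}-S_1(\bar{x}))$. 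Since a Lipschitz mapping is locally bounded, choosing $\mathcal{U}$ to be a bounded directional neighborhood of $(h,l)$ makes $\Xi((\bar{x},\bar{u})+\mathcal{U})$ bounded (its first components are $S_1(x)$ for $x$ near $\bar{x}$, its second components $u-S_1(x)$), which settles (a).

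For (b) I would appeal to the sufficient conditions (i), (ii) stated after the theorem. Using that $S_1$ is single-valued, Lipschitz and directionally differentiable I would record $DS_1(\bar{x})(h)=\{S_1^{\prime}(\bar{x};h)\}=\{k\}$ (the inclusion $\subset$ coming from the Lipschitz estimate $\norm{S_1(\bar{x}+t_k h_k)-S_1(\bar{x})}\le L t_k\norm{h_k}$) and, by the same estimate with vanishing primal direction, $DS_1(\bar{x})(0)=\{0\}$. Condition (ii) is then vacuous, because its hypotheses require $k_1\in DS_1(\bar{x})(0)=\{0\}$ together with $k_1+k_2=0$ and $(k_1,k_2)\in\mathbb{S}$, which is impossible. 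In condition (i) the admissible directions collapse to $k_1=k$, $k_2=l-k$, so the implication reduces to showing that $v_1^*\in D^*S_1(\bar{x};(h,k))(0)$ with $v_1^*+v_2^*=0$ force $v_1^*=v_2^*=0$.

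The key point, and the step I expect to need most care, is the identity $D^*S_1(\bar{x};(h,k))(0)=\{0\}$. I would derive it from the containment $N_{\Gr S_1}((\bar{x},S_1(\bar{x}));(h,k))\subset N_{\Gr S_1}(\bar{x},S_1(\bar{x}))$ of the directional normal cone in the limiting one, which gives $D^*S_1(\bar{x};(h,k))(0)\subset D^*S_1(\bar{x})(0)$; since $S_1$ is single-valued and Lipschitz it has the Aubin property, so by the Mordukhovich criterion $D^*S_1(\bar{x})(0)=\{0\}$ (alternatively one invokes the directional characterization of \cite[Theorem 1]{GfrKl16}), while $0$ belongs to the directional set because $(h,k)\in T_{\Gr S_1}(\bar{x},S_1(\bar{x}))$ as $k=S_1^{\prime}(\bar{x};h)$. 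Thus $v_1^*=0$ and then $v_2^*=-v_1^*=0$, establishing (i) and hence (b).

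Having verified (a) and (b), I would apply Theorem~\ref{The : SumRuleCoder}. The second union in \eqref{eq-2101} vanishes for the same reason that made (ii) vacuous, while in the first union the constraints $k_1\in DS_1(\bar{x})(h)=\{k\}$ and $k_1+k_2=l$ select $k_1=k$, $k_2=l-k$; the outer union over the singleton $\Xi(\bar{x},\bar{u})=\{\tilde{w}\}$ is trivial. Writing $D^*S_1((\bar{x},\tilde{w}_1);(h,k))=D^*S_1(\bar{x};(h,k))$ in the single-valued notation and substituting $\tilde{w}_2=\bar{u}-S_1(\bar{x})$ then produces \eqref{eq-103}.
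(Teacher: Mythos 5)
Your proposal is correct and follows essentially the same route as the paper's (very terse) proof: identify $\Xi(\bar{x},\bar{u})$ as the singleton $\{(S_{1}(\bar{x}),\bar{u}-S_{1}(\bar{x}))\}$, verify assumptions (a) and (b) of Theorem \ref{The : SumRuleCoder} from the single-valuedness, Lipschitz continuity and directional differentiability of $S_{1}$, and then read off \eqref{eq-103} from inclusion \eqref{eq-2101}. The paper compresses the verification into the phrase ``thanks to the assumed properties of $S_{1}$''; your explicit checks --- boundedness of $\Xi$ for (a), the collapse $DS_{1}(\bar{x})(h)=\{S_{1}^{\prime}(\bar{x};h)\}$, $DS_{1}(\bar{x})(0)=\{0\}$, and $D^{*}S_{1}(\bar{x};(h,k))(0)=\{0\}$ via the Mordukhovich criterion for conditions (i)/(ii) ensuring (b) --- are exactly the details the paper leaves to the reader.
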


\proof
First we observe that $\Xi(\bar{x},\bar{u})=\{(S_{1}(\bar{x}),\bar{u}-S_{1}(\bar{x})) \}$
and all assumptions of Theorem \ref{The : SumRuleCoder} are fulfilled thanks to the assumed properties of $S_{1}$.
Formula (\ref{eq-103}) follows directly from inclusion (\ref{eq-2101}).
\endproof

Note that inclusion (\ref{eq-103}) becomes equality provided $S_{1}$ is continuously differentiable near $\bar{x}$,
cf. \cite[formula(2.4)]{GO3}.

The next statement is a directional version of the useful scalarization formula for single-valued Lipschitz continuous mappings,
cf. \cite[Proposition 9.24]{RoWe98} and \cite[Theorem 3.28]{Mo06a}.

\begin{proposition} \label{Pro : Scal}
  Consider a single-valued continuous mapping $\varphi: \R^n \to \R^m$
  which is also Lipschitz continuous near $\xb$ in direction $u$. Then for any $y^*,v \in \R^m$ one has
  \[D^*\varphi(\xb;(u,v))(y^*) = \partial \skalp{y^*,\varphi} (\xb;(u,\skalp{y^*,v})).\]
\end{proposition}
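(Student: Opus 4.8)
The plan is to reduce the statement to a \emph{pointwise} (Fréchet-level) scalarization that is valid throughout a directional neighborhood of $u$, and then to pass to the directional limit. Since $\varphi$ is Lipschitz continuous near $\xb$ in direction $u$, it is Lipschitz (with a common modulus $L$) near every point of $\xb+\mathcal{V}$ for a suitable directional neighborhood $\mathcal{V}$, so all base points $x_k=\xb+t_ku_k$ occurring below are points of local Lipschitz continuity. The first step is to establish, for each such $x$ and each $\eta\in\R^m$, the identity $\hat D^*\varphi(x)(\eta)=\hat\partial\skalp{\eta,\varphi}(x)$. This follows by writing out both definitions: $\xi\in\hat D^*\varphi(x)(\eta)$ says that the $\limsup$ over $x'\to x$ of $(\skalp{\xi,x'-x}-\skalp{\eta,\varphi(x')-\varphi(x)})/\norm{(x'-x,\varphi(x')-\varphi(x))}$ is $\le 0$, while $\xi\in\hat\partial\skalp{\eta,\varphi}(x)$ is the same $\limsup$ with denominator $\norm{x'-x}$; the local Lipschitz bound gives $\norm{x'-x}\le\norm{(x'-x,\varphi(x')-\varphi(x))}\le\sqrt{1+L^2}\,\norm{x'-x}$, so the two denominators are comparable and the two conditions are equivalent.

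For the inclusion $D^*\varphi(\xb;(u,v))(y^*)\subset\partial\skalp{y^*,\varphi}(\xb;(u,\skalp{y^*,v}))$ I would take $\xi$ in the left-hand side, realized by $t_k\downarrow 0$, $u_k\to u$, $(\xi_k,-\eta_k)\to(\xi,-y^*)$ with $(\xi_k,-\eta_k)\in\hat N_{\Gr\varphi}(x_k,\varphi(x_k))$, where $x_k=\xb+t_ku_k$ and $w_k:=(\varphi(x_k)-\varphi(\xb))/t_k\to v$. The pointwise identity gives $\xi_k\in\hat\partial\skalp{\eta_k,\varphi}(x_k)$. Writing $g:=\skalp{y^*,\varphi}$ and using $\skalp{y^*-\eta_k,\varphi(x')-\varphi(x_k)}\ge -L\norm{y^*-\eta_k}\,\norm{x'-x_k}$, one sees that $\xi_k$ is an $\varepsilon_k$-regular subgradient of the \emph{fixed} function $g$ at $x_k$, with $\varepsilon_k:=L\norm{y^*-\eta_k}\to 0$. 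The last coordinate of the epigraph direction behaves correctly, namely $(g(x_k)-g(\xb))/t_k=\skalp{y^*,w_k}\to\skalp{y^*,v}$; passing to the limit in the $\varepsilon$-enlarged regular normals to $\epi g$ (a directional analogue of the standard fuzzy characterization of the limiting subdifferential, which I would record as a separate lemma) then yields $\xi\in\partial g(\xb;(u,\skalp{y^*,v}))$.

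For the reverse inclusion I would start from $\xi\in\partial g(\xb;(u,\skalp{y^*,v}))$, realized by $(\xi_k,-\beta_k)\in\hat N_{\epi g}$ at base points over $x_k=\xb+t_ku_k$; Lipschitz continuity of $g$ forces these base points onto $\Gr g$ with $\beta_k>0$ and $\beta_k\to 1$, so $\xi_k/\beta_k\in\hat\partial g(x_k)=\hat D^*\varphi(x_k)(y^*)$ and hence $(\xi_k,-\beta_k y^*)\in\hat N_{\Gr\varphi}(x_k,\varphi(x_k))$ with the \emph{fixed} multiplier $y^*$. Passing to the limit gives $(\xi,-y^*)\in N_{\Gr\varphi}((\xb,\varphi(\xb));(u,\tilde v))$ for some limit $\tilde v$ of the difference quotients $w_k$, which are bounded by Lipschitzness. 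Here lies what I expect to be the main obstacle: the epigraph of $g$ retains only the scalar directional datum $\skalp{y^*,w_k}\to\skalp{y^*,v}$, so a priori one controls only $\skalp{y^*,\tilde v}=\skalp{y^*,v}$, whereas membership in $D^*\varphi(\xb;(u,v))(y^*)$ requires the \emph{full} secant $w_k$ to converge to $v$ itself. Closing this gap — arguing that the realizing sequence can be chosen, or is forced through single-valuedness and the directional Lipschitz structure, so that $\tilde v=v$ — is the delicate heart of the proof; once it is settled, observation \eqref{eq : ImportantDirections} together with the two inclusions delivers the claimed equality.
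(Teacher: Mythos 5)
Your first half is sound and is essentially the paper's own argument: the paper likewise reduces to the pointwise scalarization $\hat D^*\varphi(x)(\eta)=\hat\partial\skalp{\eta,\varphi}(x)$ at the base points $x_k$ and then removes the varying multiplier $\eta_k$; the only difference is the technical device. The paper writes $\skalp{\eta_k,\varphi}=\skalp{y^*,\varphi}+\skalp{\eta_k-y^*,\varphi}$ and applies the fuzzy sum rule, landing on exact Fr\'echet subgradients of the fixed function $\skalp{y^*,\varphi}$ at points within $t_k^2$ of $x_k$, whereas you keep $\varepsilon_k$-subgradients of the fixed function and defer the conversion to exact subgradients at nearby points to a separate lemma. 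Both devices are standard and interchangeable, so this inclusion is fine modulo writing out your lemma.

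The real issue is the reverse inclusion, and your diagnosis is exactly right --- but the situation is worse than you suspect: the gap cannot be closed, because the stated equality is false for general $v$. Take $\varphi:\R\to\R^2$, $\varphi(x)=(x,|x|)$, $\xb=0$, $u=1$, $y^*=(0,1)$, $v=(2,1)$. Then $\skalp{y^*,\varphi}=|\cdot|$ and $\skalp{y^*,v}=1$, so the right-hand side equals $\partial|\cdot|(0;(1,1))=\{1\}$ (the epigraph points from direction $(1,1)$ carrying nonzero regular normals lie on the graph at positive arguments, where $|\cdot|$ is smooth with derivative $1$). On the other hand, $(u,v)=(1,2,1)\notin T_{\Gr\varphi}(0,\varphi(0))=\{(h,h,|h|)\mv h\in\R\}$, hence $N_{\Gr\varphi}((0,\varphi(0));(u,v))=\emptyset$ and the left-hand side is empty. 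What your limiting argument actually proves --- and what the paper's own proof proves as well --- is the union formula
\[
\partial\skalp{y^*,\varphi}(\xb;(u,\mu))=\bigcup_{w\in D\varphi(\xb)(u),\ \skalp{y^*,w}=\mu} D^*\varphi(\xb;(u,w))(y^*),
\]
the union being unavoidable because the epigraph of $\skalp{y^*,\varphi}$ retains only the scalar datum $\skalp{y^*,w_k}$ of the secants. Indeed, the paper justifies ``$\supset$'' by invoking Theorem \ref{The : ChainRule} with $g=\skalp{y^*,\cdot}$, and that theorem delivers precisely the union over all $w\in D\varphi(\xb)(u)$ with $\skalp{y^*,w}=\skalp{y^*,v}$, not the single term $w=v$; the identification of the union with that single term is silently, and in general wrongly, assumed. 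The single-term equality is recovered when the union collapses, e.g.\ when $\varphi$ is in addition directionally differentiable at $\xb$ in direction $u$ and $v=\varphi^{\prime}(\xb;u)$, since directional Lipschitz continuity then forces $D\varphi(\xb)(u)=\{v\}$. So: your forward inclusion is correct, your reverse inclusion is incomplete exactly as you say, but no argument could have completed it --- you located the precise point at which both your proof and the paper's own proof break down, and the proposition itself needs either the union formulation or an additional differentiability hypothesis.
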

\begin{proof}
  Let $x^* \in D^*\varphi(\xb;(u,v))(y^*)$, i.e., $(x^*,y^*) \in N_{\Gr \varphi}((\xb,\varphi(\xb));(u,v))$ and consider sequences
  $(t_k) \downarrow 0$, $(u_k,v_k) \to (u,v)$, $(x_k^*,y_k^*) \to (x^*,y^*)$ with
  $(x_k^*,y_k^*) \in \hat N_{\Gr \varphi}((\xb,\varphi(\xb)) + t_k(u_k,v_k))$.
  Thus, we have $v_k = (\varphi(\xb + t_k u_k) - \varphi(\xb))/t_k$ and, taking into account
  that $\varphi$ is Lipschitz continuous near $\xb + t_k u_k$ and applying \cite[Proposition 9.24(b)]{RoWe98}, we obtain
  \[x_k^* \in \hat D^*\varphi(\xb + t_k u_k)(y_k^*) = \hat \partial \skalp{y_k^*,\varphi}(\xb + t_k u_k) =
    \hat \partial [\skalp{y^*,\varphi} + \skalp{y_k^* - y^*,\varphi}](\xb + t_k u_k).\]
  The fuzzy sum rule \cite[Theorem 2.33]{Mo06a} yields the existence of
  $x_1,x_2$ with $\norm{x_i - (\xb + t_k u_k)} \leq t_k^2$ for $i=1,2$ such that
  \[x_k^* \in \hat \partial \skalp{y^*,\varphi}(x_1) +
  \hat \partial \langle y^{*}_{k}-y^{*}, \varphi \rangle (x_{2})\subset  \hat \partial \skalp{y^*,\varphi}(x_1)+
   (\norm{y_k^* - y^*} \kappa + t_k^2 )\B, \]
  where $\kappa$ denotes the Lipschitz constant.
  Consequently, there exists $\xi_k^* \in \hat \partial \skalp{y^*,\varphi}(x_1)$ with $\xi_k^* \to x^*$
  and similarly as in the proof of Theorem \ref{The : ConstSet} one can easily
  show that $\tilde u_k := (x_1-\xb)/t_k \to u$ and Lipschitz continuity of $\varphi$
  then implies also $(\varphi(x_1) - \varphi(\xb))/t_k \to v$. Thus, we conclude
  \[(\xi_k^*,-1) \in \hat N_{\epi \skalp{y^*,\varphi}}
  (\xb + t_k \tilde u_k, \skalp{y^*,\varphi}(\xb) + t_k \skalp{y^*,(\varphi(\xb + t_k \tilde u_k) - \varphi(\xb))/t_k}),\]
  showing $x^* \in \partial \skalp{y^*,\varphi} (\xb;(u,\skalp{y^*,v}))$.

  The reverse inclusion follows easily from Theorem \ref{The : ChainRule}
  since $\skalp{y^*,\varphi} = g \circ \varphi$ for linear function $g = \skalp{y^*,\cdot}$.
  Note that $\partial^{\infty}g(\varphi(\xb))=\{0\}$ implies metric subregularity of $F(x,\alpha) = \epi g - (\varphi(x),\alpha)$.
\end{proof}

An equivalent formulation of the following useful result was also proven in \cite[Corollary 5.9]{LoWaYa17}.

\begin{corollary} \label{Cor : SingDifLipsch}
  For an lsc function $f:\R^n \to \bar \R$ that is also Lipschitz continuous near $\xb$ in direction $h$
  one has $\partial^{\infty}f(\xb;(h,\nu))\subset \{0\}$ for all $\nu \in \R$.
\end{corollary}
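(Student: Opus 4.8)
The plan is to unfold the definition of the directional singular subdifferential and then exploit that the regular subdifferential of a Lipschitz function is uniformly bounded, exactly the mechanism already used in the proof of Proposition \ref{Pro : Scal}. Take $x^* \in \partial^{\infty} f(\xb;(h,\nu))$; by definition $(x^*,0) \in N_{\epi f}((\xb,f(\xb));(h,\nu))$, so there exist sequences $(t_k) \downarrow 0$, $(h_k,\nu_k) \to (h,\nu)$ and $(x_k^*,-\beta_k) \to (x^*,0)$ with $(x_k^*,-\beta_k) \in \hat N_{\epi f}((\xb,f(\xb)) + t_k(h_k,\nu_k))$. In particular $\beta_k \to 0$, and since a regular normal to an epigraph at a point of its graph has nonnegative last component (the fact used in the proof of Proposition \ref{Pro : MaxSubdif}), we have $\beta_k \geq 0$ for large $k$; at interior points of $\epi f$ one simply has $\hat N_{\epi f} = \{0\}$, i.e. $x_k^* = \beta_k = 0$.

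The key step is to establish the uniform bound $\norm{x_k^*} \leq \kappa \beta_k$ along a single Lipschitz constant. Let $\kappa > 0$ and a directional neighborhood $\mathcal{V}$ of $h$ be furnished by the directional Lipschitz continuity of $f$, so that $\vert f(y) - f(y') \vert \leq \kappa \norm{y - y'}$ for all $y,y' \in \xb + \mathcal{V}$; as recorded earlier in the paper, $f$ is then locally $\kappa$-Lipschitz around every point of $\xb + \mathcal{V}$ different from $\xb$. Setting $x_k := \xb + t_k h_k$, the points $x_k \to \xb$ from direction $h$ eventually lie in $\xb + \mathcal{V}$, hence $f$ is $\kappa$-Lipschitz near $x_k$ for all large $k$. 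By \cite[Proposition 9.24]{RoWe98} this gives $\hat\partial f(x_k) \subset \kappa \B$, and for a graph point with $\beta_k > 0$ one has $x_k^*/\beta_k \in \hat\partial f(x_k)$, so $\norm{x_k^*} \leq \kappa \beta_k$; the cases $\beta_k = 0$ (forcing $x_k^* = 0$) and the interior case obey the same bound trivially. Passing to the limit and using $\beta_k \to 0$ yields $x^* = \lim_k x_k^* = 0$, whence $\partial^{\infty} f(\xb;(h,\nu)) \subset \{0\}$.

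The main obstacle is essentially of a bookkeeping nature rather than analytic: one must guarantee that a single constant $\kappa$ controls the moving base points $x_k$, which is precisely what directional Lipschitz continuity provides (together with its propagation to full neighborhoods of points in $\xb + \mathcal{V}\setminus\{\xb\}$), and one must dispose of the degenerate direction $h = 0$. In that case a directional neighborhood of $0$ is an ordinary neighborhood, so $f$ is genuinely locally Lipschitz near $\xb$ and the claim reduces to the classical identity $\partial^{\infty} f(\xb) = \{0\}$ via the inclusion $N_{\epi f}((\xb,f(\xb));(0,\nu)) \subset N_{\epi f}(\xb,f(\xb))$. The bound $\norm{x_k^*} \leq \kappa \beta_k$ itself is elementary and needs no directional machinery beyond the uniform Lipschitz constant, which is why the statement follows as a corollary of the scalarization argument.
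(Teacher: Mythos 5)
Your proof is correct, but it takes a genuinely different route from the paper's. The paper disposes of the corollary in one line by specializing the scalarization formula (Proposition \ref{Pro : Scal}) to $y^*=0$: it writes $\partial^{\infty}f(\xb;(h,\nu)) \subset D^*f(\xb;(h,\nu))(0) = \partial \skalp{0,f}(\xb;(h,\skalp{0,\nu})) = \{0\}$, where $\skalp{0,f}$ is the zero function. You instead argue directly from the definition, establishing the uniform estimate $\norm{x_k^*}\le \kappa\beta_k$ for the approximating regular normals, with the single constant $\kappa$ supplied by directional Lipschitz continuity (propagated, as the paper notes, to full neighborhoods of the base points $\xb+t_kh_k$), and you handle $h=0$ separately via the classical identity since a directional neighborhood of $0$ is an ordinary neighborhood. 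The trade-off: the paper's argument is shorter but leans on the nontrivial Proposition \ref{Pro : Scal} (whose proof uses fuzzy calculus) and on the inclusion $\partial^{\infty}f(\xb;(h,\nu)) \subset D^*f(\xb;(h,\nu))(0)$, which tacitly uses exactly the geometric facts you make explicit (under Lipschitz continuity near $x_k$, points of $\epi f$ strictly above the graph are interior, so the approximating points may be taken on the graph); your argument is self-contained and elementary, and exposes the underlying mechanism. Two small blemishes, neither fatal: the bound $\hat\partial f(x_k)\subset\kappa\B$ is not the content of \cite[Proposition 9.24]{RoWe98} (that is the coderivative scalarization result) but follows directly from the definition of the regular subdifferential and the Lipschitz estimate; and the graph-point case $\beta_k=0$ is not quite ``trivial'' from that subgradient bound --- it is the standard fact that horizontal regular normals to the epigraph of a function Lipschitz near the reference point vanish, obtained by testing the regular-normal inequality against the cone $\{(u,\mu)\mv \mu\ge\kappa\norm{u}\}$, which is locally contained in $\epi f - (x_k,f(x_k))$; this is the same estimate $\norm{x_k^*}\le\kappa\beta_k$ extended to $\beta_k=0$, but it deserves that one-line justification.
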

\begin{proof}
  Indeed, $\partial^{\infty}f(\xb;(h,\nu)) \subset D^*f(\xb;(h,\nu))(0) = \partial \skalp{0,f} (\xb;(h,\skalp{0,\nu})) = \{0\}$.
\end{proof}

\section{Applications}

In this section we apply some of the above presented calculus rules to several problems of variational analysis,
where directional notions can be advantageously utilized.

\subsection{First-order sufficient conditions for directional metric regularity and subregularity of feasibility mappings}

Consider a mapping of the form $F(x)= \Omega - \varphi(x)$ which arises in qualification conditions throughout the whole paper.
The next result (announced already in Section 2)
extends the results from \cite[Theorem 1, Corollary 1]{GfrKl16},
where $\varphi$ is assumed to be continuously differentiable.

\begin{theorem}
  Let the multifunction $F : \R^n \rightrightarrows \R^m$ be given by $F(x)= \Omega - \varphi(x)$,
  where $\varphi: \R^n \to \R^m$ is continuous and $\Omega \subset \R^m$ is a closed set.
  Further let $(\xb,0) \in \Gr F$,  $(u,v) \in \R^{n}\times \R^{m}$ be given and assume
  that $\varphi$ is calm at $\xb$ in direction $u$. Then
  \begin{enumerate}
   \item $F$ is metrically subregular at $(\xb,0)$ in direction $u$ provided
   for all $w \in D\varphi(\xb)(u) \cap T_{\Omega}(\varphi(\xb))$ one has the implication
   \[0 \in D^*\varphi(\xb;(u,w))(\lambda), \,
   \lambda \in N_{\Omega}(\varphi(\xb);w) \ \Longrightarrow \ \lambda = 0.\]
   \item $F$ is metrically regular at $(\xb,0)$ in direction $(u,v)$ (cf. \cite[Definition 1]{Gfr13a})
   provided for all $w \in D\varphi(\xb)(u)$ with $w + v \in T_{\Omega}(\varphi(\xb))$ one has the implication
   \[0 \in D^*\varphi(\xb;(u,w))(\lambda), \, \lambda \in N_{\Omega}(\varphi(\xb);v+w) \ \Longrightarrow \ \lambda = 0.\]
  \end{enumerate}
\end{theorem}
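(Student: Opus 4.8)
The plan is to reduce both assertions to a single kernel condition on the directional limiting coderivative of $F$ and then to compute that coderivative with the calculus already at hand. First I would record the reduction: by the directional version of the Mordukhovich criterion (the characterization of directional metric regularity in \cite[Theorem 1]{GfrKl16}, in the sense of \cite[Definition 1]{Gfr13a}), the mapping $F$ is metrically regular at $(\xb,0)$ in direction $(u,v)$ provided
\[0 \in D^*F((\xb,0);(u,v))(\lambda) \ \Longrightarrow \ \lambda = 0.\]
For the first assertion I would observe that metric subregularity of $F$ at $(\xb,0)$ in direction $u$ is exactly the restriction to $y=0$ of metric regularity in direction $(u,0)$: a directional neighborhood of $(u,0)$ contains all points $(\xb+z,0)$ with $z$ in a directional neighborhood of $u$, so the regularity estimate read at $y=0$ yields $\dist{C}{x} \leq \kappa\,\dist{\Omega}{\varphi(x)}$ on $\xb+\mathcal U$. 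Since the hypotheses of the two parts coincide when $v=0$, it then suffices to prove the second assertion and to specialize.

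Second, I would compute an upper estimate of $D^*F((\xb,0);(u,v))$. Writing $\Gr F = \Theta^{-1}(\Omega)$ with $\Theta(x,y):=\varphi(x)+y$, the pre-image rule Theorem \ref{The : ConstSet} applies once its qualification — metric subregularity in direction $(u,v)$ of $(x,y)\mapsto \Omega-\Theta(x,y)=\Omega-\varphi(x)-y$ at $((\xb,0),0)$ — is verified. The enabling observation is that this qualification holds automatically with modulus $1$: for any $(x,y)$ one may project onto $\Gr F$ by adjusting $y$ alone, putting $y':=\omega-\varphi(x)$ for a nearest point $\omega\in\proj{\Omega}{\varphi(x)+y}$, whence $\dist{\Gr F}{(x,y)}\le \norm{y-y'}=\dist{\Omega}{\varphi(x)+y}$. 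Moreover $\Theta$ is calm at $(\xb,0)$ in direction $(u,v)$, the summand $\varphi$ being calm at $\xb$ in direction $u$ and $y\mapsto y$ being smooth, so the calm version of Theorem \ref{The : ConstSet} is available. Using $D\Theta((\xb,0))(u,v)=\{w+v \mv w\in D\varphi(\xb)(u)\}$ and $D^*\Theta((\xb,0);((u,v),w+v))(\lambda)=D^*\varphi(\xb;(u,w))(\lambda)\times\{\lambda\}$, I would obtain
\[N_{\Gr F}((\xb,0);(u,v)) \subset \bigcup_{\scriptstyle w\in D\varphi(\xb)(u)\atop \scriptstyle w+v\in T_\Omega(\varphi(\xb))} \{(\xi,\lambda)\mv \lambda\in N_\Omega(\varphi(\xb);w+v),\ \xi\in D^*\varphi(\xb;(u,w))(\lambda)\}.\]
The route advertised in the paper reaches the same estimate by applying the coderivative sum rule to $F=\Omega+(-\varphi)$, the constant summand $\Omega$ trivializing its qualification condition.

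Finally I would combine the two steps. If $0\in D^*F((\xb,0);(u,v))(\mu)$, that is $(0,-\mu)\in N_{\Gr F}((\xb,0);(u,v))$, the estimate furnishes $w\in D\varphi(\xb)(u)$ with $w+v\in T_\Omega(\varphi(\xb))$ such that, setting $\lambda:=-\mu$, one has $\lambda\in N_\Omega(\varphi(\xb);v+w)$ and $0\in D^*\varphi(\xb;(u,w))(\lambda)$. The hypothesis of the second assertion then forces $\lambda=0$, i.e. $\mu=0$, so the kernel of $D^*F((\xb,0);(u,v))$ is trivial and the reduction delivers metric regularity; the first assertion follows by taking $v=0$. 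The two points I expect to require care are the precise invocation of the directional coderivative criterion for metric regularity, which is the genuinely deep external input and must be applicable for a merely continuous, directionally calm $\varphi$, and the bookkeeping of signs between $N_\Omega(\varphi(\xb);\cdot)$, the coderivative $D^*\varphi$ and the multiplier $\mu$; the substitution $\lambda=-\mu$ resolves the latter, while the former is supplied by \cite{GfrKl16,Gfr13a}.
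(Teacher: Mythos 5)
Your proof is correct, and it shares the paper's overall skeleton: both reduce the two assertions to the kernel condition $0 \in D^*F((\xb,0);(u,v))(\lambda) \Rightarrow \lambda = 0$ via the directional coderivative criterion for directional metric regularity (the paper invokes \cite[Theorem 5]{Gfr13a}; your reference to \cite[Theorem 1]{GfrKl16} serves the same purpose), and both treat part 1 as part 2 with $v=0$, since metric regularity in direction $(u,0)$ restricted to $y=0$ yields metric subregularity in direction $u$. Where you genuinely diverge is in establishing the coderivative estimate. The paper states it as a lemma and proves it with the directional coderivative sum rule (Theorem \ref{The : SumRuleCoder}), decomposing $F = G + (-\varphi)$ with the constant mapping $G(\cdot)\equiv\Omega$: the Aubin property of $G$ trivializes condition (b), and calmness of $\varphi$ gives inner calmness of the intermediate mapping $\Xi(x,y)=\{(y+\varphi(x),-\varphi(x))\}$, which removes the union over spherical directions. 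You instead apply the pre-image rule (Theorem \ref{The : ConstSet}) to $\Gr F = \Theta^{-1}(\Omega)$ with $\Theta(x,y)=\varphi(x)+y$, verifying its qualification condition outright with modulus $1$ by projecting in the $y$-variable alone, and using calmness of $\Theta$ in direction $(u,v)$ (inherited from that of $\varphi$) to invoke the sharp single-union version. Your route is more self-contained: it bypasses the sum rule and its hypotheses (a), (b) entirely and rests only on the foundational Theorem \ref{The : ConstSet} plus the two identities $D\Theta(\xb,0)(u,v)=v+D\varphi(\xb)(u)$ and $D^*\Theta\bigl((\xb,0);((u,v),w+v)\bigr)(\lambda)=D^*\varphi(\xb;(u,w))(\lambda)\times\{\lambda\}$. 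You assert these without proof; they are routine and correct, but deserve one line of justification --- e.g., $\Gr\Theta = A^{-1}(\Gr\varphi\times\R^m)$ for the invertible linear map $A(x,y,z)=(x,z-y,y)$, and directional limiting normal cones transform exactly under linear isomorphisms --- and your sign bookkeeping via $\lambda=-\mu$ checks out against the paper's lemma. What the paper's route buys is brevity given the machinery already built in Section 5 (its lemma proof is three lines) and a demonstration of the sum rule in action, which is how the proof is advertised in Section 2; what yours buys is logical economy, since Theorem \ref{The : ConstSet} sits at the base of the paper's calculus, whereas Theorem \ref{The : SumRuleCoder} is itself derived from it through a chain of corollaries.
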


The proof of the second statement is based on \cite[Theorem 5]{Gfr13a} which provides equivalent
characterizations of directional metric
regularity and in finite dimensional spaces one of them states that $F$ is metrically regular at $(\xb,0)$
in direction $(u,v)$ if and only if $0 \in D^*F((\xb,0);(u,v))(\lambda)$ implies $\lambda = 0$.
The first statement then follows from the fact that metric regularity of $F$ at $(\xb,0)$ in direction $(u,0)$
implies metric subregularity of $F$ at $(\xb,0)$ in direction $u$.
Thus, it suffices to show the following lemma.

\begin{lemma}
  Under the settings and assumptions of the previous theorem we have
  \[D^*F((\xb,0);(u,v))(-\lambda) \subset
  \begin{cases}
  \bigcup_{w \in D\varphi(\xb)(u), \atop w+v \in T_{\Omega}(\varphi(\xb)))}
  D^*\varphi(\xb;(u,w))(\lambda) & \textrm{ if } \lambda \in N_{\Omega}(\varphi(\xb);v+w),\\
  \emptyset & \textrm{ otherwise.}
  \end{cases}\]
\end{lemma}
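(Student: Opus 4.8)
The plan is to realize $\Gr F$ as a pre-image and invoke Theorem \ref{The : ConstSet}. Put $\Phi(x,y):=\varphi(x)+y$, so that $\Gr F=\{(x,y)\mv\varphi(x)+y\in\Omega\}=\Phi^{-1}(\Omega)$, with associated feasibility mapping $\tilde F(x,y):=\Omega-\Phi(x,y)$. The enabling observation I would make first is that $\tilde F$ is \emph{automatically} metrically subregular at $((\xb,0),0)$ — indeed globally, with modulus $1$ — because the free variable $y$ can absorb any projection onto $\Omega$: given $(x,y)$ and $z'\in\proj{\Omega}{\varphi(x)+y}$, the point $(x,z'-\varphi(x))$ lies in $\Gr F$, whence $\dist{\Gr F}{(x,y)}\leq\norm{y-(z'-\varphi(x))}=\dist{\Omega}{\varphi(x)+y}=\dist{\tilde F(x,y)}{0}$. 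Since $\varphi$ is calm at $\xb$ in direction $u$ and $y\mapsto y$ is Lipschitz, $\Phi$ is calm at $(\xb,0)$ in direction $(u,v)$, so the sharper (calmness) conclusion of Theorem \ref{The : ConstSet} applies and gives
\[N_{\Gr F}((\xb,0);(u,v))\subset\bigcup_{w'\in D\Phi(\xb,0)(u,v)\cap T_\Omega(\varphi(\xb))}D^*\Phi((\xb,0);((u,v),w'))\,N_\Omega(\varphi(\xb);w').\]

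Next I would compute the two ingredients attached to $\Phi$. A short tangent-cone argument yields $D\Phi(\xb,0)(u,v)=D\varphi(\xb)(u)+v$, so every relevant direction has the form $w'=w+v$ with $w\in D\varphi(\xb)(u)$, and the constraint $w'\in T_\Omega(\varphi(\xb))$ reads $w+v\in T_\Omega(\varphi(\xb))$. For the coderivative I would pass through the linear isomorphism $T(x,y,z)=(x,z-y,y)$, which carries $\Gr\Phi$ onto $\Gr\varphi\times\R^m$. As directional normal cones transform under a linear bijection by $N_{T^{-1}(S)}(p;d)=T^{\top}N_S(Tp;Td)$, and $N_{\R^m}(\cdot)=\{0\}$, Proposition \ref{Pro : ProductSet} (or a direct computation of regular normals to $\Gr\Phi$, where freeness of $y$ forces the $y$- and $z$-components of any normal to be opposite) gives
\[D^*\Phi((\xb,0);((u,v),w+v))(\mu)\subset\{(\alpha,\mu)\mv\alpha\in D^*\varphi(\xb;(u,w))(\mu)\}.\]

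Finally I would substitute and translate back. Combining the two displays, every element of $N_{\Gr F}((\xb,0);(u,v))$ has the form $(\alpha,\mu)$ with $\mu\in N_\Omega(\varphi(\xb);w+v)$ and $\alpha\in D^*\varphi(\xb;(u,w))(\mu)$ for some admissible $w$. Since $D^*F((\xb,0);(u,v))(-\lambda)=\{\xi\mv(\xi,\lambda)\in N_{\Gr F}((\xb,0);(u,v))\}$, matching $(\xi,\lambda)=(\alpha,\mu)$ forces $\mu=\lambda$ and reproduces exactly the claimed union over $w\in D\varphi(\xb)(u)$ with $w+v\in T_\Omega(\varphi(\xb))$, nonempty only when $\lambda\in N_\Omega(\varphi(\xb);v+w)$. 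I expect the main obstacle to be the middle step — computing the directional coderivative of the nonsmooth map $\Phi$ in terms of $D^*\varphi$ and keeping the direction/normal bookkeeping straight (the shift $w'=w+v$ and the coupling $\zeta=\mu=\lambda$); by contrast, the observation that removes any qualification hypothesis on $F$ is the automatic metric subregularity of $\tilde F$ coming from the free $y$-variable.
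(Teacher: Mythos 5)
Your proof is correct, but it takes a genuinely different route from the paper's. The paper treats $F$ as the sum $G+(-\varphi)$, where $G$ is the constant multifunction $G(x)\equiv\Omega$, and invokes the directional coderivative sum rule (Theorem \ref{The : SumRuleCoder}): the calmness of $\varphi$ makes the intermediate mapping $\Xi(x,y)=\{(y+\varphi(x),-\varphi(x))\}$ inner calm (which disposes of assumption (a) and removes the extra union), the Aubin property of $G$ takes care of the qualification condition (b), and the explicit formula for $D^*G$ then yields the statement. You instead realize $\Gr F=\Phi^{-1}(\Omega)$ for $\Phi(x,y)=\varphi(x)+y$ and apply the pre-image result (Theorem \ref{The : ConstSet}) directly. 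Your key observation --- that the feasibility mapping $\Omega-\Phi(x,y)$ is globally metrically subregular with modulus $1$ because the free variable $y$ absorbs the projection onto $\Omega$ --- is correct and removes any qualification hypothesis; the computations $D\Phi(\xb,0)(u,v)=D\varphi(\xb)(u)+v$ and the coderivative inclusion via the linear change of variables $T(x,y,z)=(x,z-y,y)$ carrying $\Gr\Phi$ onto $\Gr\varphi\times\R^m$ also check out (the transformation rule for directional normal cones under a linear bijection is not stated in the paper but is elementary, and the directional calmness of $\Phi$ in direction $(u,v)$ does follow from that of $\varphi$ in direction $u$, though the fact that the $x$-projection of a directional neighborhood of $(u,v)$ lands in a directional neighborhood of $u$ deserves a line of justification, in particular for the case $u=0$ where both reduce to full balls). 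As to what each approach buys: yours is more self-contained, resting only on Theorem \ref{The : ConstSet} and Proposition \ref{Pro : ProductSet} and avoiding verification of the sum rule's hypotheses; the paper's is shorter given the machinery already developed and fits its narrative, since this lemma is exactly the step that reduces Proposition \ref{Pro : MetrSubregVerif} to the sum rule, as announced in Section 2. Structurally, the two proofs exploit the same fact --- the set $\Omega$ enters through a free additive variable --- once as the Aubin property of the constant mapping $G$, once as the automatic metric subregularity of your auxiliary feasibility map.
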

\begin{proof}
The assumed calmness of $\varphi$ implies that the intermediate mapping $\Xi(x,y)=\{y+\varphi(x),-\varphi(x)\}$
is inner calm at $(\bar{x},0,\varphi(\bar{x}),-\varphi(\bar{x}))$ in direction $(u,v)$.
On the other hand, denoting by $G$ the mapping that to each $x$ assigns set $\Omega$, it is clear that $G$ has
the Aubin property and we may apply the sum rule for coderivatives, Theorem \ref{The : SumRuleCoder}.
The statement of the lemma thus follows from the fact that for some $w$ we obtain
\begin{eqnarray*}
  D^*G((\xb,\varphi(\xb));(u,v+w))(-\lambda) & = &
  \{\xi \mv (\xi,\lambda) \in N_{\R^n \times \Omega}((\xb,\varphi(\xb));(u,v+w))\} \\
  & = & \begin{cases}
  0 & \textrm{ if } \lambda \in N_{\Omega}(\varphi(\xb);v+w),\\
  \emptyset & \textrm{ otherwise.}
  \end{cases}
\end{eqnarray*}
\end{proof}

\subsection{Subtransversality of set systems}
Consider the collection  of closed sets $C_{1}, C_{2}, \ldots, C_{l} $ from $\mathbb{R}^{n}$
and a point $\bar{x} \in C:= \bigcap\limits^{l}_{i=1} C_{i}$.
By the definition (cf., e.g., \cite[Definition 1(ii)]{AK17}),
these sets are {\em subtransversal} at $\bar{x}$ provided there exist a neighborhood $U$ of $\bar{x}$
and a constant $L>0$ such that the metric inequality
\[
d_{C}(x)\leq L \sum\limits^{l}_{i=1} d_{C_{i}}(x)
\]
holds for all $x \in U$. This is, on the other hand, equivalent with the calmness of the {\em perturbation} mapping
\[
S(p_{1},\ldots, p_{l})=\{x|p_{i}+ x \in C_{i}, i=1,2,\ldots, l\}
\]
 at $(0,\ldots,0,\bar{x})$, cf. \cite[Section 3]{IoOut08}.
 A straightforward application of \cite[Theorem 3.8]{GO3} yields the following result.
\begin{theorem}\label{Appl2}
Assume that there do not exist nonzero vectors $u \in \mathbb{R}^{n}, v^{*}_{i}\in\mathbb{R}^{n}, i=1,2,\ldots,l$, such that
\[
u \in \bigcap\limits^{l}_{i=1}T_{C_{i}} (\bar{x}), \quad
0 = \sum\limits^{l}_{i=1} v^{*}_{i}, \quad
v^{*}_{i} \in N_{C_{i}} (\bar{x},u), i=1,2,\ldots,l.
\]
Then collection $\{C_{1}, C_{2}, \ldots, C_{l}\}$ is subtransversal at $\bar{x}$.
\end{theorem}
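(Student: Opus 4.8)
The plan is to recast subtransversality of $\{C_1,\dots,C_l\}$ at $\bar{x}$ as metric subregularity of a single feasibility mapping of the form treated throughout the paper, and then to read off the stated multiplier condition from Proposition \ref{Pro : MetrSubregVerif}. Concretely, put $\varphi:\R^n\to\R^{nl}$, $\varphi(x)=(x,\dots,x)$ ($l$ copies), and $Q:=\prod_{i=1}^l C_i$, so that $F(x)=Q-\varphi(x)=\prod_{i=1}^l(C_i-x)$ is exactly the mapping of Corollary \ref{Cor : Cap}, and the perturbation mapping satisfies $S=F^{-1}$. As already recorded before the statement, subtransversality at $\bar{x}$ is equivalent to calmness of $S$ at $(0,\dots,0,\bar{x})$, hence to metric subregularity of $F$ at $(\bar{x},0)$. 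Since $\varphi$ is linear it is globally Lipschitz, thus calm at $\bar{x}$ in every direction, so Proposition \ref{Pro : MetrSubregVerif} is available in each direction $u$.

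Next I would translate the directional objects in the qualification condition of Proposition \ref{Pro : MetrSubregVerif}. By Remark \ref{Rem: IdentityMapping} one has $D\varphi(\bar{x})(u)=(u,\dots,u)$ and $D^*\varphi(\bar{x};(u,w))(\lambda)=\sum_{i=1}^l\lambda_i$ for $\lambda=(\lambda_1,\dots,\lambda_l)\in\R^{nl}$, while $T_Q(\varphi(\bar{x}))=\prod_{i=1}^l T_{C_i}(\bar{x})$, so the requirement $w=(u,\dots,u)\in T_Q(\varphi(\bar{x}))$ is precisely $u\in\bigcap_{i=1}^l T_{C_i}(\bar{x})$. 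Finally, Proposition \ref{Pro : ProductSet} gives the over-estimate $N_Q(\varphi(\bar{x});w)\subset\prod_{i=1}^l N_{C_i}(\bar{x};u)$. Substituting, the implication demanded by Proposition \ref{Pro : MetrSubregVerif} in direction $u$ collapses to condition \eqref{eq : capCQ}: for $u\in\bigcap_i T_{C_i}(\bar{x})$, $\sum_i v_i^*=0$ with $v_i^*\in N_{C_i}(\bar{x};u)$ forces all $v_i^*=0$; and because the inclusion of Proposition \ref{Pro : ProductSet} points in the favourable direction (it only enlarges the set of candidate multipliers), ruling out nonzero $v_i^*$ there suffices. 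This is exactly the hypothesis of the theorem, read over all nonzero directions $u$; for $u\notin\bigcap_i T_{C_i}(\bar{x})$ the implication is vacuous as there is no admissible $w$. Invoking the remark following Corollary \ref{Cor : Cap}, condition \eqref{eq : capCQ} in direction $u$ yields directional metric subregularity of $F$ in direction $u$, and hence $F$ is metrically subregular in \emph{every} direction $u\in\mathbb{S}$.

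The remaining, and genuinely delicate, step is to upgrade directional metric subregularity in every direction to ordinary (non-directional) metric subregularity of $F$ at $(\bar{x},0)$, which is what \cite[Theorem 3.8]{GO3} supplies; I would simply invoke it, the hypotheses having been matched in the previous paragraph. If instead one argues directly, the mechanism is a compactness and uniformization argument: each $u\in\mathbb{S}$ carries a directional neighborhood $\mathcal{V}_{\rho_u,\delta_u}(u)$ and a modulus $\kappa_u$ on which the subregularity estimate holds; finitely many of these cover $\mathbb{S}$ by compactness, one glues their moduli via $\kappa:=\max_j\kappa_j$ on a sufficiently small ball, and the estimate at $x=\bar{x}$ is trivial since $\bar{x}\in C$. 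I expect this uniformization — ensuring that finitely many directional moduli combine into a single neighborhood estimate — to be the only nonroutine point, the translation of the multiplier condition being immediate given Remark \ref{Rem: IdentityMapping} together with Propositions \ref{Pro : ProductSet} and \ref{Pro : MetrSubregVerif}.
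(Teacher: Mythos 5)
Your argument is correct, but it is routed differently from the paper's. The paper's proof is essentially a citation: having identified subtransversality with calmness of the perturbation mapping $S(p_1,\ldots,p_l)=\{x\mv p_i+x\in C_i,\ i=1,\ldots,l\}$ at $(0,\ldots,0,\xb)$, it applies \cite[Theorem 3.8]{GO3} --- a calmness criterion for implicitly defined multifunctions formulated via directional limiting normals over all nonzero directions --- directly to $S$, so both the translation of the multiplier condition and the passage from directionwise conditions to genuine calmness are hidden inside the cited result. You instead dualize to metric subregularity of the feasibility mapping $F(x)=\prod_{i=1}^l(C_i-x)$, verify the hypothesis of Proposition \ref{Pro : MetrSubregVerif} direction by direction exactly as in the remark following Corollary \ref{Cor : Cap} (condition \eqref{eq : capCQ}, which for nonzero $u\in\bigcap_i T_{C_i}(\xb)$ is the assumed multiplier condition, and for $u\notin\bigcap_i T_{C_i}(\xb)$ is vacuous since $T_{\prod_i C_i}(\varphi(\xb))\subset\prod_i T_{C_i}(\xb)$), and then upgrade subregularity in every direction $u\in\mathbb{S}$ to plain subregularity at $(\xb,0)$. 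That upgrade is the one genuinely additional step in your route, and your sketch of it is sound: the sets $\{v\in\mathbb{S}\mv\norm{v-u}<\delta_u\}$ admit a finite subcover of $\mathbb{S}$, the corresponding directional neighborhoods $\mathcal{V}_{\rho_{u_j},\delta_{u_j}}(u_j)$ then cover a punctured ball around the origin, and the maximum of the finitely many moduli serves as a single modulus, the estimate at $x=\xb$ being trivial because $\xb\in C=F^{-1}(0)$. One correction: attributing this upgrade to \cite[Theorem 3.8]{GO3} is inaccurate --- that theorem is the implicit-multifunction calmness criterion the paper itself invokes, not the elementary equivalence between subregularity in all unit directions and plain subregularity --- so your final write-up should rest on your direct compactness argument for that step. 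The trade-off between the two proofs: yours is self-contained within the paper's own machinery (Propositions \ref{Pro : ProductSet} and \ref{Pro : MetrSubregVerif}, Remark \ref{Rem: IdentityMapping}) at the cost of one extra argument, whereas the paper's is a one-liner that outsources all of the work to \cite{GO3}.
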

Very often the sets $C_{i}$ correspond to various constraint systems and can be described as
\begin{equation}\label{eq-6.2}
C_{i}=\varphi^{-1}_{i}(Q_{i})
\end{equation}
with $Q_{i}\subset \mathbb{R}^{m_{i}}$ being closed and $\varphi_{i}:\mathbb{R}^{n} \rightarrow \mathbb{R}^{m}$
being Lipschitz continuous near $\bar{x}$. As a simple consequence of Theorem \ref{Appl2}
we obtain a condition ensuring the subtransversality of a collection  of pre-images.
\begin{corollary} \label{Cor : Transv}
In the setting of Theorem \ref{Appl2} assume that the sets $C_{i}$ are given via (\ref{eq-6.2}) where,
in addition to the posed assumptions, functions $\varphi_{i}$ are directionally differentiable at $\bar{x}$.
Further assume that the mappings
\[
F_{i}(x)= Q_{i} - \varphi_{i}(x), i=1,2,\ldots,l,
\]
are metrically subregular at $\bar{x}$. Finally suppose that there do not exist nonzero vectors
$u\in \mathbb{R}^{n}, v^{*}_{i}\in \mathbb{R}^{n}, i=1,2,\ldots,l,$ such that
\begin{align}
& u \in \{h \in \mathbb{R}^{n} | \varphi^{\prime}_{i}(\bar{x};h)\in T_{Q_{i}}(\varphi(\bar{x})),i=1,2,\ldots,l\},
\label{eq-6.3}\\
& 0 = \sum\limits^{l}_{i=1} v^{*}_{i},
 \label{eq-6.4}\\
& v^{*}_{i} \in D^{*}\varphi_{i} (\bar{x};u)N_{Q_{i}}(\varphi_{i}(\bar{x}); \varphi^{\prime}_{i}(\bar{x};u)), i=1,2,\ldots,l.
 \label{eq-6.5}
\end{align}
Then collection  $\{C_{1},C_{2},\ldots, C_{l}\}$ is subtransversal at $\bar{x}$.
\end{corollary}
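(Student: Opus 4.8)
The plan is to reduce the statement to Theorem \ref{Appl2} by comparing the two qualification conditions. It suffices to show that any nonzero tuple $(u,v_1^*,\ldots,v_l^*)$ satisfying the hypotheses of Theorem \ref{Appl2}, namely $u \in \bigcap_{i=1}^l T_{C_i}(\xb)$, $\sum_{i=1}^l v_i^* = 0$ and $v_i^* \in N_{C_i}(\xb;u)$, is also a solution of the system \eqref{eq-6.3}--\eqref{eq-6.5}. Indeed, the posed assumption is precisely the non-existence of a nonzero solution of \eqref{eq-6.3}--\eqref{eq-6.5}; it would therefore force the non-existence of any nonzero tuple obeying the hypotheses of Theorem \ref{Appl2}, and that theorem would then deliver the subtransversality. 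So I would fix one such tuple (it stays nonzero, being the same tuple) and verify \eqref{eq-6.3}--\eqref{eq-6.5} for it. Note that \eqref{eq-6.4} is nothing but $\sum_i v_i^*=0$, so only \eqref{eq-6.3} and \eqref{eq-6.5} require work.

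First I would record a single-valuedness fact for the graphical derivative. Since each $\varphi_i$ is Lipschitz continuous near $\xb$ and directionally differentiable there, a direct computation from the definition of $T_{\Gr \varphi_i}$ (replacing $u_k$ by $u$ at the cost of a Lipschitz error that vanishes) gives $D\varphi_i(\xb)(u)=\{\varphi_i'(\xb;u)\}$. The inclusion needed for \eqref{eq-6.3}, namely $u\in T_{C_i}(\xb)\Rightarrow \varphi_i'(\xb;u)\in T_{Q_i}(\varphi_i(\xb))$, then follows without any subregularity: taking $t_k\downarrow 0$, $h_k\to u$ with $\xb+t_kh_k\in C_i$, i.e. $\varphi_i(\xb+t_kh_k)\in Q_i$, and passing to the limit in $(\varphi_i(\xb+t_kh_k)-\varphi_i(\xb))/t_k\to\varphi_i'(\xb;u)$ exhibits $\varphi_i'(\xb;u)$ as a tangent to $Q_i$.

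The main work is \eqref{eq-6.5}, for which I would apply the pre-image Theorem \ref{The : ConstSet} to $C_i=\varphi_i^{-1}(Q_i)$ in direction $u$. Its two hypotheses hold: Lipschitz continuity of $\varphi_i$ supplies calmness at $\xb$ in direction $u$, and the assumed (non-directional) metric subregularity of $F_i$ upgrades to directional metric subregularity in direction $u$, because every directional neighborhood $\mathcal{V}_{\rho,\delta}(u)$ is contained in $\rho\B$, so shrinking $\rho$ places $\xb+\mathcal{V}_{\rho,\delta}(u)$ inside the neighborhood on which the subregularity estimate holds. The calm version of Theorem \ref{The : ConstSet}, together with $D\varphi_i(\xb)(u)=\{\varphi_i'(\xb;u)\}$, then yields
\[
  v_i^*\in\bigcup_{v\in\{\varphi_i'(\xb;u)\}\cap T_{Q_i}(\varphi_i(\xb))}D^*\varphi_i(\xb;(u,v))\,N_{Q_i}(\varphi_i(\xb);v)
  =D^*\varphi_i(\xb;(u,\varphi_i'(\xb;u)))\,N_{Q_i}(\varphi_i(\xb);\varphi_i'(\xb;u)),
\]
the equality holding since \eqref{eq-6.3}, already established, guarantees $\varphi_i'(\xb;u)\in T_{Q_i}(\varphi_i(\xb))$. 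Reading $D^*\varphi_i(\xb;u)$ as the shorthand $D^*\varphi_i(\xb;(u,\varphi_i'(\xb;u)))$, legitimate because $\varphi_i$ is single-valued and directionally differentiable, this is precisely \eqref{eq-6.5}, and the reduction $A\Rightarrow B$ is complete.

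I expect the only genuinely delicate point to be the bookkeeping in this last step: matching the graphical-derivative and directional-coderivative output of Theorem \ref{The : ConstSet} with the directional-derivative notation of \eqref{eq-6.3}--\eqref{eq-6.5}, and in particular justifying both the collapse $D\varphi_i(\xb)(u)=\{\varphi_i'(\xb;u)\}$ and the passage from non-directional to directional metric subregularity, so that the sharper (calm) estimate with no $\mathbb{S}$-term genuinely applies. Everything else is a faithful transcription of Theorem \ref{Appl2}.
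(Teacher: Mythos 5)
Your proposal is correct and follows essentially the same route as the paper: reduce to Theorem \ref{Appl2} via the contrapositive, use Theorem \ref{The : ConstSet} (in its calm form, with $D\varphi_i(\xb)(u)=\{\varphi_i'(\xb;u)\}$ and directional metric subregularity inherited trivially from the non-directional assumption) to pass from $N_{C_i}(\xb;u)$ to the right-hand side of \eqref{eq-6.5}, and identify $\bigcap_{i=1}^l T_{C_i}(\xb)$ with the set in \eqref{eq-6.3}. The only cosmetic difference is that the paper asserts the latter identification as an equality, whereas you prove just the one inclusion actually needed for the reduction, which is a legitimate economy.
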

The proof follows easily from Theorem \ref{Appl2}, Theorem \ref{The : ConstSet}
and the fact that the set on the right-hand side of (\ref{eq-6.3}) amounts exactly to $\bigcap\limits^{l}_{i=1}T_{C_{i}} (\bar{x})$.
\begin{example}
Let $n=2, l=2,$
\begin{equation}\label{eq-6.6}
\begin{split}
& C_{1} = \{x \times \mathbb{R}^{2}| -x_{1}-x^{2}_{1}+x_{2}\leq 0, -x_{1}-x^{2}_{1}-x_{2}\leq 0\},\\
& C_{2}= \left\{ x \times \mathbb{R}^{2} \left|\left(
\begin{array}{r}
x_{2}\\
-x_{1}
\end{array}
\right)
\in \Gr N_{\mathbb{R}_{+}} \right.\right\},
\end{split}
\end{equation}
and $\bar{x}=(0,0)$. It is easy to verify that all assumptions of Corollary \ref{Cor : Transv}
are fulfilled and the only direction $u$ satisfying (\ref{eq-6.3}) is the direction $\mathbb{R} _{+}(1,0)$.
Clearly, with $\varphi_{1}$ and $Q_{1}$ given in (\ref{eq-6.6}) one has
\[
\nabla \varphi_{1}(\bar{x})^{T} N_{Q_{1}}(\varphi_{1}(\bar{x});\nabla \varphi_{1}(\bar{x})u)=
\left[ \begin{array}{rr}
-1 & -1\\
1 & -1
\end{array} \right ]
N_{\mathbb{R}^{2}_{+}}((0,0);(-1,-1))=\{(0,0)\}.
\]
Consequently, there does not exist any nonzero pair $v^{*}_{1}, v^{*}_{2}$ satisfying conditions
(\ref{eq-6.4}), (\ref{eq-6.5}) and so  collection  $\{C_{1},C_{2}\}$ is subtransversal at $\bar{x}$.

Note that we are not able to detect this property via the (stronger) Aubin property of $S$ because, with $\bar{p}=0$,
\[
\begin{split}
 D^{*}S(\bar{p},\bar{x})(0)= &
\left\{(a^{*},b^{*})\in \mathbb{R}^{2} \times \mathbb{R}^{2} \left| a^{*} \in
\left[ \begin{array}{rr}
-1 & -1\\
1 & -1
\end{array} \right ] \mathbb{R}^{2}_{+},
b^{*} \in
\left[ \begin{array}{rr}
0 & -1\\
1 & 0
\end{array} \right ] N_{\Gr N_{\mathbb{R}_{+}}}(0,0),\right.\right.\\
& a^{*}+b^{*}=0 \}.
\end{split}
\]
Thus, since the vectors $a^{*}=(-1,0)$ and $b^{*}=(1,0)$ belong to $D^{*}S(\bar{p},\bar{x})(0)$,
we conclude from the Mordukhovich criterion that $S$ does not possess the Aubin property around
$(\bar{p},\bar{x})$.\hfill $\triangle$
\end{example}

\subsection{Aubin property of implicitly given mappings}

By combination of \cite[Theorem 4.4]{GO3} with Proposition \ref{Pro : Scal}
one obtains a sufficient condition for the Aubin property for a class of implicitly defined multifunctions.
Let the function $M: \mathbb{R}^{l}\times  \mathbb{R}^{n} \rightarrow \mathbb{R}^{m}$
be Lipschitz continuous near the reference point $(\bar{p},\bar{x}) \in \mathbb{R}^{l}\times \mathbb{R}^{n}$
satisfying $M(\bar{p},\bar{x})=0$ and consider the solution mapping
\[
S(p):= \{x \in \mathbb{R}^{n} | M(p,x)=0\}.
\]
\begin{theorem}\label{Appl3}
Assume that $M$ is directionally differentiable at $(\bar{p},\bar{x})$ and
\begin{enumerate}
 \item [(i)]
 $\{u\in \mathbb{R}^{n} | M^{\prime}((\bar{p},\bar{x});(v,u))=0\} \neq \emptyset \mbox{ for all } v \in \mathbb{R}^{l}$;
 \item [(ii)]
 $M$ is metrically subregular at $(\bar{p},\bar{x},0)$;
 \item [(iii)]
 For every nonzero $(v,u)\in \mathbb{R}^{l} \times \mathbb{R}^{n}$ such that $M^{\prime}((\bar{p},\bar{x});(v,u))=0 $
 one has the implication
 \[
 \left[ \begin{array}{l}
q^{*}\\
0
\end{array} \right ] \in \partial \langle y^{*},M  \rangle ((\bar{p},\bar{x});(v,u,0))
\Rightarrow q^{*}=0.
 \]
 \end{enumerate}
 Then $S$ has the Aubin property around $(\bar{p},\bar{x})$. \\
 This statement remains valid if conditions (ii), (iii) are replaced by the (stronger) implication
 \begin{equation}\label{eq-6.7}
 \left[ \begin{array}{l}
q^{*}\\
0
\end{array} \right ] \in \partial \langle y^{*},M  \rangle ((\bar{p},\bar{x});(v,u,0))
\Rightarrow y^{*}=0.
 \end{equation}
 \end{theorem}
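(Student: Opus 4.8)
The plan is to obtain the result by specializing \cite[Theorem 4.4]{GO3} to the single-valued mapping $M$ and then transcribing the coderivative condition appearing there into the subdifferential form through the scalarization formula of Proposition \ref{Pro : Scal}. Concretely, \cite[Theorem 4.4]{GO3} asserts the Aubin property of $S$ around $(\bar{p},\bar{x})$ under a direction-existence condition (matching (i)), the metric subregularity (matching (ii)), and a coderivative qualification condition which, for single-valued $M$, reads: for every nonzero $(v,u)$ with $M^{\prime}((\bar{p},\bar{x});(v,u))=0$,
\[
(q^*,0) \in D^*M((\bar{p},\bar{x});((v,u),0))(y^*) \ \Longrightarrow \ q^*=0;
\]
call this condition $(\ast)$. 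The first task is thus to align the hypotheses of that theorem with (i)--(iii), the only nontrivial identification being the equivalence of $(\ast)$ with (iii).

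To verify this equivalence I would argue as follows. Since $M$ is Lipschitz continuous near $(\bar{p},\bar{x})$, it is Lipschitz continuous in every direction, and as $M$ is directionally differentiable with $M^{\prime}((\bar{p},\bar{x});(v,u))=0$, Proposition \ref{Pro : Scal} applies with source direction $(v,u)$ and image direction $0$. It gives
\[
D^*M((\bar{p},\bar{x});((v,u),0))(y^*) = \partial \skalp{y^*,M}((\bar{p},\bar{x});((v,u),\skalp{y^*,0})) = \partial \skalp{y^*,M}((\bar{p},\bar{x});(v,u,0)),
\]
since $\skalp{y^*,0}=0$ annihilates the epigraphical (scalar) component of the direction. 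Hence $(\ast)$ is verbatim condition (iii), and the first assertion of the theorem follows directly from \cite[Theorem 4.4]{GO3}.

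For the final claim it remains to show that the single implication \eqref{eq-6.7} implies both (ii) and (iii). The implication towards (iii) is immediate: if $(q^*,0)\in\partial\skalp{y^*,M}((\bar{p},\bar{x});(v,u,0))$, then \eqref{eq-6.7} forces $y^*=0$, so that $\skalp{y^*,M}\equiv 0$ and $(q^*,0)$ lies in the directional subdifferential of the zero function, namely $\{(0,0)\}$; thus $q^*=0$. For (ii) I would again exploit scalarization: specializing \eqref{eq-6.7} to $q^*=0$ yields, for each nonzero critical direction $(v,u)$ (that is, with $M^{\prime}((\bar{p},\bar{x});(v,u))=0$), the implication $0\in D^*M((\bar{p},\bar{x});((v,u),0))(y^*)\Rightarrow y^*=0$. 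Reading $M$ as the data of the feasibility mapping $(p,x)\mapsto\{0\}-M(p,x)$ with $\Omega=\{0\}$, $T_{\{0\}}(0)=\{0\}$ and $N_{\{0\}}(0;\cdot)=\mathbb{R}^m$, Proposition \ref{Pro : MetrSubregVerif} then delivers metric subregularity of $M$ at $(\bar{p},\bar{x},0)$ in each such direction $(v,u)$; directions with $M^{\prime}\neq 0$ are metrically subregular automatically, since along them $\norm{M}$ is bounded below by a multiple of the distance to the base point. A standard normalization-and-subsequence argument, in which any sequence violating pointwise subregularity yields, after rescaling, a violated critical direction, then upgrades directional subregularity in all unit directions to the pointwise property (ii).

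I expect the main obstacle to be the faithful transcription of the coderivative hypothesis of \cite[Theorem 4.4]{GO3} into the form $(\ast)$ --- that is, confirming that its partial coderivative condition is precisely the one in which the $x$-component is set to zero --- and, for the last part, the passage from directional metric subregularity in every critical direction to the pointwise subregularity (ii). Both steps are conceptually routine but demand careful directional bookkeeping, especially in keeping track of which component of the dual pair $(q^*,r^*)$ corresponds to the perturbation variable $p$.
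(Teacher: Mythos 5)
Your proposal is correct and follows essentially the same route as the paper, whose entire proof is the remark that the theorem is obtained ``by combination of [GO3, Theorem~4.4] with Proposition~\ref{Pro : Scal}'': specialize that result to the single-valued Lipschitz mapping $M$ and use the directional scalarization formula $D^*M((\bar p,\bar x);((v,u),0))(y^*)=\partial\langle y^*,M\rangle((\bar p,\bar x);(v,u,0))$ to translate its coderivative conditions into (i)--(iii) and \eqref{eq-6.7}. Your additional derivation that \eqref{eq-6.7} implies both (ii) (via Proposition~\ref{Pro : MetrSubregVerif} with $\Omega=\{0\}$ plus the standard compactness argument upgrading directional subregularity in all unit directions to pointwise subregularity) and (iii) is a correct, self-contained filling-in of details that the paper leaves to the cited reference.
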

 This result can be applied to parameterized nonlinear complementarity problems (NCPs) governed by the equation
\begin{equation}\label{eq-6.8}
0 = M(p,x):= \min \{G(p,x), H(p,x)\},
\end{equation}
where functions $G,H: \mathbb{R}^{l} \times \mathbb{R}^{n} \rightarrow \mathbb{R}^{n}$ are Lipschitz continuous
near $(\bar{p},\bar{x})$,
directionally differentiable at $(\bar{p},\bar{x})$ and the ``minimum'' is taken componentwise.
As always in the treatment of finite-dimensional NCPs we introduce for $(p,x)\in \Gr S$ the index sets
\[
\begin{split}
& I_{G}(p,x):= \{i \in \{1,2,\ldots,n\} | G_{i}(p,x)=0, H_{i}(p,x)> 0\}\\
& I_{H}(p,x):= \{i \in \{1,2,\ldots,n\} | G_{i}(p,x)>0, H_{i}(p,x)= 0\}\\
& I_{0}(p,x):= \{i \in \{1,2,\ldots,n\} | G_{i}(p,x)= H_{i}(p,x)= 0\},
\end{split}
\]
which create a partition of $\{1,2,\ldots,n\}$. To be able to apply Theorem \ref{Appl3} to $M$ given by (\ref{eq-6.8})
we observe that $M^{\prime}((\bar{p},\bar{x});(v,u))$ (for general directions $(v,u)$) amounts to the vector $b$ such that
\[
b_{i} =\left\langle
\begin{split}
& G^{\prime}_{i}((\bar{p},\bar{x});(v,u)) \mbox{ for } i \in I_{G}(\bar{p},\bar{x})\cup I_{0G}(\bar{p},\bar{x})\\
& H^{\prime}_{i}((\bar{p},\bar{x});(v,u)) \mbox{ for } i \in I_{H}(\bar{p},\bar{x})\cup I_{0H}(\bar{p},\bar{x})\\
& G^{\prime}_{i}((\bar{p},\bar{x});(v,u))=
H^{\prime}_{i}((\bar{p},\bar{x});(v,u))
\mbox{ otherwise, }
\end{split} \right.
\]
where $I_{0G}(\bar{p},\bar{x}):= \{j \in I_{0}(\bar{p},\bar{x})|G^{\prime}_{j}((\bar{p},\bar{x});(v,u))<
H^{\prime}_{j}((\bar{p},\bar{x});(v,u))\}$ and $I_{0H}(\bar{p},\bar{x}):= \{j \in I_{0}(\bar{p},\bar{x})|H^{\prime}_{j}((\bar{p},\bar{x});(v,u))<
G^{\prime}_{j}((\bar{p},\bar{x});(v,u))\}$.

Furthermore, by virtue of Propositions \ref{Pro : MaxSubdif}, \ref{Pro : MinSubdif}
and the definition of the directional limiting subdifferential, we obtain
\[
\begin{split}
& \partial \langle y^{*},M \rangle ((\bar{p},\bar{x});(v,u,0))\subset\\
& \sum\limits_{ i \in I_{G}(\bar{p},\bar{x})\cup I_{0G}(\bar{p},\bar{x})}\Phi_{i}(y^{*}_{i}) +
\sum\limits_{ i \in I_{H}(\bar{p},\bar{x})\cup I_{0H}(\bar{p},\bar{x})}\Psi_{i}(y^{*}_{i}) +
\sum\limits_{ i \in I_{0}(\bar{p},\bar{x})\setminus (I_{0G}(\bar{p},\bar{x}) \cup I_{0H}(\bar{p},\bar{x}))}\Theta_{i}(y^{*}_{i}),
\end{split}
\]
where the multifunctions $\Phi_{i}, \Psi_{i}, \Theta_{i}, i=1,2,\ldots,n, \mbox{ map } \mathbb{R}$
to (subsets of) $\mathbb{R}^{n}$ and, for $a \in \mathbb{R}$, are defined by
\[
\begin{split}
& \Phi_{i}(a) =\left\langle
\begin{split}
&  a \partial G_{i}((\bar{p},\bar{x});(v,u,0)) \mbox{ if } a > 0\\
&  |a| \partial (-G_{i})((\bar{p},\bar{x});(v,u,0)) \mbox{ if } a < 0\\
&0 \mbox{ otherwise, }
\end{split} \right.\\
& \Psi_{i}(a) =\left\langle
\begin{split}
&  a \partial H_{i}((\bar{p},\bar{x});(v,u,0)) \mbox{ if } a > 0\\
&  |a| \partial (-H_{i})((\bar{p},\bar{x});(v,u,0)) \mbox{ if } a < 0\\
&0 \mbox{ otherwise, }
\end{split} \right.\\
&\Theta_{i}(a) =\left\langle
\begin{split}
&  a [\partial G_{i}((\bar{p},\bar{x});(v,u,0)) \cup \partial H_{i}((\bar{p},\bar{x});(v,u,0))] \mbox{ if } a > 0\\
&  |a| \mbox{ conv } [\partial (-G_{i})((\bar{p},\bar{x});(v,u,0)), \partial (-H_{i})((\bar{p},\bar{x});(v,u))] \mbox{ if } a < 0\\
& 0 \mbox{ otherwise. }
\end{split} \right.
\end{split}
\]

The usage of the above formulas is illustrated by the following nonsmooth NCP.
\begin{example}
Consider the NCP governed by equations (\ref{eq-6.8}) with $l=n=1, G(p,x)=x-x^{2}, H(p,x)=|p|-x-x^{2}$
and put $(\bar{p},\bar{x})=(0,0)$. One has
\[
M^{\prime}((\bar{p},\bar{x});(v,u))=\min \{u,|v| -u\}
\]
and so assumption (i) of Theorem \ref{Appl3} is fulfilled. Implication (\ref{eq-6.7}) has to be verified
for directions $(v,u)$ satisfying either $v\neq 0, u=0$ or $v\neq 0, u=|v|$.
In the first case one has $\{1\} \in I_{0G}(\bar{p},\bar{x})$ and so implication (\ref{eq-6.7}) attains the form
\[
\left[ \begin{array}{l}
q^{*}\\
0
\end{array} \right ] = y^{*}
\left[ \begin{array}{l}
0\\
1
\end{array} \right ] \Rightarrow y^{*}=0.
\]
In the second case one has $\{1\} \in I_{0H}(\bar{p},\bar{x})$ and so we have to verify the conditions
\[
\begin{split}
\nexists ~y^{*}> 0 : & \left[ \begin{array}{l}
q^{*}\\
0
\end{array} \right ] \in y^{*}
 \left[ \begin{array}{l}
[-1,1]\\
-1
\end{array} \right ]  \\[1ex]
\nexists~ y^{*}<0 : & \left[ \begin{array}{l}
q^{*}\\
0
\end{array} \right ] \in |y^{*}|
 \left[ \begin{array}{c}
\{-1\}\cup \{1\}\\
-1
\end{array} \right ].
\end{split}
\]
Since both these conditions are fulfilled, the corresponding mapping $S$ has the Aubin property around $(\bar{p},\bar{x})$.

Note that the standard condition ensuring the Aubin property of $S$ via the Mordukhovich criterion is violated.
Indeed, by the calculus from \cite[Chapter 3]{Mo06a} one has
\begin{equation}\label{eq-6.9}
D^{*}M(\bar{p},\bar{x})(y^{*})= \partial \langle y^{*}, M\rangle (\bar{p},\bar{x})\subset
\left \langle
\begin{split}
& y^{*}
\left( \left\{ \left[ \begin{array}{l}
0\\
1
\end{array} \right ]\right\} \cup
\left\{ \left[ \begin{array}{l}
[-1,1]\\
-1
\end{array} \right ]\right\}\right) ~{\rm if }~ y^{*}>0\\
& |y^{*}| ~{\rm conv } ~\left(\left[ \begin{array}{r}
0\\
-1
\end{array} \right ],
\left[ \begin{array}{c}
\{-1\}\cup \{1\}\\
1
\end{array} \right ]\right)~ {\rm if }~ y^{*}<0.
\end{split}
\right.
\end{equation}
One can easily verify that, for instance, the nonzero pair $(q^{*},y^{*})=(0.5,-1)$ satisfies the system
\[
\left[ \begin{array}{l}
q^{*}\\
0
\end{array} \right ] \in \partial \langle y^{*},M\rangle (\bar{p},\bar{x})
\]
and so the estimate (\ref{eq-6.9}) is not precise enough to enable us to detect
the Aubin property of $S$ around $(\bar{p},\bar{x})$ via the Mordukhovich criterion. \hfill$\triangle$
\end{example}

\medskip

\subsection{Improving the standard calculus}

It can easily be seen that all rules presented in Sections 3-5 reduce to their counterparts from the classical generalized differential calculus provided we set the considered directions to be zero. In some cases, however, the classical rules may  even be improved when one employs the appropriate results from this paper. This concerns both the restrictiveness of the imposed assumptions as well as the sharpness of the resulting estimates.

As to the former case, Proposition 6.1 below extends a statement from \cite[Theorem 6.43]{RoWe98}
by a substantial relaxation  of the assumptions.

\begin{proposition}
  Consider a closed set $C \subset \R^n$ and a continuous mapping $\varphi: \R^n \to \R^l$,
  set $Q = \varphi(C)$ and consider $\yb \in Q$.
  Let $\Psi : \R^l \rightrightarrows \R^n$ be given by $\Psi(y) := \varphi^{-1}(y) \cap C$
  and let it be inner semicontinuous at $(\yb,\xb)$ w.r.t. $Q$ for some $\xb \in \Psi(\yb)$.
  Assume further that the set-valued mapping $F: \R^{l+n} \rightrightarrows \R^{2(l+n)}$ given by
  $F(y,x) = \left( \Gr \varphi^{-1} - (y,x) \right) \times \left( (\R^l \times C) - (y,x) \right)$ is
  metrically subregular at $((\yb,\xb),(0,\ldots,0))$. Then
  \begin{equation*}
    N_Q(\yb) \ \subset \ \{y^* \mv D^*\varphi(\xb)(y^*) \cap N_C(\xb) \neq \emptyset\}.
  \end{equation*}
\end{proposition}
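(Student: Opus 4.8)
The plan is to recognize this proposition as the zero-direction instance of the image-set formula, Theorem~\ref{The : ConstSetForw}. First I would record the elementary fact that $N_Q(\yb)=N_Q(\yb;0)$, and that, because the directional neighborhood $\mathcal{V}_{\rho,\delta}(0)=\rho\B$ is an ordinary neighborhood (cf.\ \eqref{EqDefNbhd}), every hypothesis phrased ``in direction $0$'' coincides with its non-directional counterpart. In particular, the assumed inner semicontinuity of $\Psi$ at $(\yb,\xb)$ w.r.t.\ $Q$ is exactly inner semicontinuity in direction $v=0$, and the assumed metric subregularity of $F$ at $((\yb,\xb),(0,\ldots,0))$ yields metric subregularity of $F$ in every direction, in particular in all directions of the form $(0,h)$ (one simply shrinks the radius $\rho$ of any directional neighborhood so that it sits inside the full neighborhood on which the estimate holds).

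Next I would invoke the inner-semicontinuity version of Theorem~\ref{The : ConstSetForw} with $v=0$, so that the union over $\xb\in\Psi(\yb)$ disappears and the estimate holds with the prescribed $\xb$. Since $\varphi$ is here only assumed continuous rather than directionally Lipschitz, I would replace the Lipschitz hypothesis of that theorem by the metric subregularity of $F$, as licensed by Remark~\ref{Rem : ConstSetForw}: with $v=0$ the mapping appearing in that remark is precisely the present $F$, and the required directional subregularity in the directions $(0,h)$ follows from the non-directional subregularity noted above. This delivers estimate~\eqref{eqn : ConstSetForw} at $v=0$; observing that both unions there now carry $0$ in the second slot and share the same index condition $0\in D\varphi(\xb)(h)$, $h\in T_C(\xb)$ (the sphere-restricted union being contained in the other), the right-hand side reduces to
\[
\bigcup_{h\in\{h\,\mv\,0\in D\varphi(\xb)(h)\}\cap T_C(\xb)}
\{y^*\mv D^*\varphi(\xb;(h,0))(y^*)\cap N_C(\xb;h)\neq\emptyset\}.
\]

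The final step, and the only genuine point to check, is to collapse this union into the non-directional right-hand side. Here I would use the monotonicity inclusions built into the definitions: the containment $N_{\Gr\varphi}((\xb,\varphi(\xb));(h,0))\subset N_{\Gr\varphi}(\xb,\varphi(\xb))$ gives $D^*\varphi(\xb;(h,0))(y^*)\subset D^*\varphi(\xb)(y^*)$, and likewise $N_C(\xb;h)\subset N_C(\xb)$. Thus any $\xi\in D^*\varphi(\xb;(h,0))(y^*)\cap N_C(\xb;h)$ already lies in $D^*\varphi(\xb)(y^*)\cap N_C(\xb)$, so each term of the union is contained in $\{y^*\mv D^*\varphi(\xb)(y^*)\cap N_C(\xb)\neq\emptyset\}$, and the proof is complete. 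The main obstacle is therefore not analytic but a matter of careful bookkeeping: verifying that the direction-$0$ hypotheses coincide with the stated non-directional ones, and that the weakened qualification condition of Remark~\ref{Rem : ConstSetForw} is exactly matched by the metric subregularity of the $F$ given here.
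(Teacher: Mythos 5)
Your proposal is correct and follows essentially the same route as the paper: the paper's proof also consists of applying Theorem~\ref{The : ConstSetForw} at $v=0$, invoking Remark~\ref{Rem : ConstSetForw} to replace the directional Lipschitz hypothesis by the assumed metric subregularity of $F$, and noting that the sphere-restricted union is absorbed by the first one. Your additional bookkeeping (the identification of direction-$0$ hypotheses with their non-directional counterparts, and the final collapse via $D^*\varphi(\xb;(h,0))(y^*)\subset D^*\varphi(\xb)(y^*)$ and $N_C(\xb;h)\subset N_C(\xb)$) simply makes explicit what the paper leaves implicit.
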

The statement follows from Theorem \ref{The : ConstSetForw} for direction $v=0$,
taking also into account Remark \ref{Rem : ConstSetForw}.
Note that the second term in \eqref{eqn : ConstSetForw} is covered by the first one
and no inner calmness assumption is thus needed.

Next we show a possible improvement of two estimates for the limiting normal cones. 

\begin{proposition}
  Given a closed set $Q \subset \R^m$ and  a continuously differentiable function $\varphi: \R^n \to \R^m$,
    consider the set $C := \varphi^{-1}(Q)$.
  Assume  that the set-valued mapping $F: \R^n \rightrightarrows \R^m$ given by $F(x) = Q - \varphi(x)$ is
  metrically subregular at $(\xb,0)$ in every direction $h \in T_C(\xb) \cap \mathbb{S}$. Then
  \[N_C(\xb) \subset \hat N_C(\xb) \cup \bigcup\limits_{h \in T_C(\xb) \cap \mathbb{S}}
  \nabla \varphi(\xb)^T N_Q(\varphi(\xb);\nabla \varphi(\xb) h).\]
\end{proposition}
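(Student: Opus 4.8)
The plan is to reduce everything to two tools already established in the excerpt: the decomposition of a limiting normal cone into its Fr\'echet part and its directional parts (Lemma~\ref{lem : LimNorConeViaDir}), and the pre-image formula of Theorem~\ref{The : ConstSet} in its sharper, calmness-based form. Applying Lemma~\ref{lem : LimNorConeViaDir} to the set $C$ at $\xb$ I would first write
\[N_C(\xb) = \hat N_C(\xb) \cup \bigcup_{h \in T_C(\xb) \cap \mathbb{S}} N_C(\xb;h),\]
so that the whole statement follows as soon as each directional cone $N_C(\xb;h)$, for $h \in T_C(\xb) \cap \mathbb{S}$, is shown to be contained in $\nabla\varphi(\xb)^T N_Q(\varphi(\xb);\nabla\varphi(\xb)h)$.

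Next I would fix such a direction $h$. Since $\varphi$ is continuously differentiable it is in particular calm at $\xb$ in direction $h$, so the calmness conclusion of Theorem~\ref{The : ConstSet} is available; its hypothesis is exactly the assumed directional metric subregularity of $F$ at $(\xb,0)$ in direction $h \in T_C(\xb)\cap\mathbb{S}$. This yields
\[N_C(\xb;h) \subset \bigcup_{v \in D\varphi(\xb)(h) \cap T_Q(\varphi(\xb))} D^*\varphi(\xb;(h,v)) N_Q(\varphi(\xb);v).\]
I would then invoke Remark~\ref{Rem: IdentityMapping}: for a $C^1$ mapping $D\varphi(\xb)(h)$ is the single point $\nabla\varphi(\xb)h$, and $D^*\varphi(\xb;(h,v))(\eta) = \nabla\varphi(\xb)^T\eta$ whenever $v = \nabla\varphi(\xb)h$. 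Hence the index set of the union is either the singleton $\{\nabla\varphi(\xb)h\}$ or empty, and in the nonempty case the union collapses to the single term $\nabla\varphi(\xb)^T N_Q(\varphi(\xb);\nabla\varphi(\xb)h)$.

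The only point needing care is the bookkeeping of the degenerate case $\nabla\varphi(\xb)h \notin T_Q(\varphi(\xb))$: then the union above is empty, forcing $N_C(\xb;h)=\emptyset$, while simultaneously $N_Q(\varphi(\xb);\nabla\varphi(\xb)h)=\emptyset$ because a directional normal cone vanishes along any non-tangent direction, so the target set is empty as well and the inclusion still holds trivially. In every case, therefore, $N_C(\xb;h) \subset \nabla\varphi(\xb)^T N_Q(\varphi(\xb);\nabla\varphi(\xb)h)$; taking the union over $h \in T_C(\xb)\cap\mathbb{S}$ and substituting into the decomposition from the first step gives the claimed estimate. I do not expect a genuine obstacle here—the argument is essentially an assembly of Lemma~\ref{lem : LimNorConeViaDir}, Theorem~\ref{The : ConstSet}, and Remark~\ref{Rem: IdentityMapping}—the only subtlety being to confirm that the per-direction subregularity assumption is precisely what Theorem~\ref{The : ConstSet} requires for each individual $h$, which is why the hypothesis is stated directionwise over $T_C(\xb)\cap\mathbb{S}$.
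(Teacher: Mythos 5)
Your proposal is correct and matches the paper's own proof, which is exactly the combination of Lemma~\ref{lem : LimNorConeViaDir} with the calmness-based estimate of Theorem~\ref{The : ConstSet} (simplified for $C^1$ mappings via Remark~\ref{Rem: IdentityMapping}). Your explicit treatment of the degenerate case $\nabla\varphi(\xb)h \notin T_Q(\varphi(\xb))$ is a welcome detail the paper leaves implicit, but it does not change the route.
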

The proof follows easily from Lemma \ref{lem : LimNorConeViaDir}  and Theorem \ref{The : ConstSet}.
The increased sharpness of this estimate with respect to \cite[Theorem 6.14]{RoWe98}  has been used in sufficient conditions for the Aubin property
of a class of solution mappings in \cite{GO3,GO4}. It stays also behind the application discussed in Subsection 6.3.

A sharper estimate can be obtained in this way also in the case of normal cones to unions. Consider a family of closed sets $C_{i} \subset \mathbb{R}^{n}, i=1,2,\ldots, l$, and a point $\bar{x}\in C=\cup_{i=1}^l C_i$. The standard estimate attains  the form 
 
\begin{equation}\label{eq : UnionSimp}
  N_C(\xb) \subset \bigcup_{i \in I(\xb)} N_{C_i}(\xb),
\end{equation}
where $I(\xb) := \{i =1,\ldots,l \mv \xb \in C_i\}$. This follows, e.g., from Proposition \ref{Prop : Cup} with $h=0$.  
On the basis of Lemma \ref{lem : LimNorConeViaDir}, however, we obtain the estimate
\[N_{C}(\xb) \subset \bigcap_{i \in I(\xb)} \hat N_{C_i}(\xb) \cup \bigcup_{h \in T_C(\xb) \cap \mathbb{S}}
\bigcup_{i \in I(\xb,h)} N_{C_i}(\xb;h),\]
where $I(\xb,h) := \{i \in I(\xb) \mv h \in T_{C_i}(\xb)\}$.
This estimate is tighter then \eqref{eq : UnionSimp}, which is demonstrated in the next example.

\begin{example}[MPCC generating set]
  Let $C_1 = \R_+ \times \{0\}$, $C_2 = \{0\} \times \R_+$ and $\xb=(0,0)$. We have
  \[N_{C_1 \cup C_2}(\xb) = \left( \R_- \times \R_- \right) \cup \left( \{0\} \times \R_+ \right) \cup \left( \R_+ \times \{0\} \right),\]
  $N_{C_1}(\xb) = ( \R_- \times \R )$ and $N_{C_2}(\xb) = ( \R \times \R_- )$. This shows that in this case the standard
  calculus does not provide a tight estimate of the limiting normal cone.

  On the other hand, we obtain $\bigcap_{i \in I(\xb)} \hat N_{C_i}(\xb) = \R_- \times \R_-$,
  $T_{C}(\xb) \cap \mathbb{S} = (C_1 \cup C_2)\cap \mathbb{S} = \{(1,0),(0,1)\}$ and
  \begin{eqnarray*}
    I(\xb,(1,0)) = \{1\}, && N_{C_1}(\xb,(1,0)) = \{0\} \times \R, \\
    I(\xb,(0,1)) = \{2\}, && N_{C_2}(\xb,(0,1)) = \R \times \{0\},
  \end{eqnarray*}
  which yields that the estimate based on the directional calculus is in fact exact. \hfill$\triangle$
\end{example} 

\section*{Conclusion}
The paper contains directional variants of almost all important rules
arising in generalized differential calculus of limiting normal cones, subdifferentials and coderivatives.
Naturally, these new rules exhibit a number of similarities with their classical counterparts concerning both
the structure of resulting formulas as well as the associated qualification conditions.
On the other hand, the directional calculus has also some specific hurdles related to the computation
of images or pre-images of the given directions in the considered mappings.
As mentioned in the Introduction, they lead either to additional terms in the respective formulas
or to additional qualification conditions. Expectantly, most qualification conditions have a directional character,
which follows from the fact that in this development one needs a ``regular'' behavior of feasibility mappings
only in the relevant directions. In the rules relying (partially) on Theorem \ref{The : ConstSetForw}
we make use of a special ``inner calmness'' property which arose (under a different name)
also in completely different contexts. In Section 6 we collected some representative (classes of) problems,
where directional limiting objects are helpful and the results of this paper enable the user to compute them.

\section*{Acknowledgments}
The research of the first two authors was supported by the Austrian Science
Fund (FWF) under grant P29190-N32. The research of the third author was supported by
the Grant Agency of the Czech Republic, projects 17-04301S and 17-08182S and the Australian Research Council, project DP160100854F.

\end{document}